\newtheorem{thm}{Theorem}[section]
\newtheorem{cor}[thm]{Corollary}
\newtheorem{lem}[thm]{Lemma}
\newtheorem{sbl}[thm]{Sublemma}
\newtheorem{prop}[thm]{Proposition}
\newtheorem{fact}[thm]{Fact}
\theoremstyle{remark}
\newtheorem{rem}[thm]{Remark}
\newtheorem{examp}[thm]{Example}
\newcommand{\rr}{\mathbb{R}}
\newcommand{\nn}{\mathbb{N}}
\newcommand{\meg}{\geqslant}
\newcommand{\mik}{\leqslant}
\newcommand{\ave}{\mathbb{E}}
\begin{document}

\title[Subgaussianity is hereditarily determined]{Subgaussianity is hereditarily determined}

\author{Pandelis Dodos and Konstantinos Tyros}

\address{Department of Mathematics, University of Athens, Panepistimiopolis 157 84, Athens, Greece}
\email{pdodos@math.uoa.gr}

\address{Department of Mathematics, University of Athens, Panepistimiopolis 157 84, Athens, Greece}
\email{ktyros@math.uoa.gr}

\thanks{2010 \textit{Mathematics Subject Classification}: 60E15, 60G99.}
\thanks{\textit{Key words}: subgaussian random variable, subgaussian random vector, subvector.}


\begin{abstract}
Let $n$ be a positive integer, let $\boldsymbol{X}=(X_1,\dots,X_n)$ be a random vector in $\rr^n$ with bounded
entries, and let $(\theta_1,\dots,\theta_n)$ be a vector in $\rr^n$. We show that the subgaussian
behavior of the random variable $\theta_1 X_1+\dots +\theta_n X_n$ is essentially determined by the
subgaussian behavior of the random variables $\sum_{i\in H} \theta_i X_i$ where $H$ is a random subset of $\{1,\dots,n\}$.
\end{abstract}

\maketitle


\section{Introduction} \label{sec1}

\numberwithin{equation}{section}

\subsection{Subgaussianity} \label{subsec1.1}

Recall that a real-valued random variable $X$ is called \textit{subgaussian} if its tails are dominated by
(that is, they decay at least as fast as) the~tails of a gaussian. One of the several equivalent ways to quantify
this property is using the Orlicz norm for the function $\psi_2(x)=e^{x^2}-1$. Specifically, the random
variable $X$ is subgaussian if its Orlicz norm
\begin{equation} \label{e1.1}
\|X\|_{\psi_2} \coloneqq \inf\big\{ s>0: \ave\big[e^{(X/s)^2}\big]\mik 2 \big\}
\end{equation}
is finite.

Next, let $n$ be a positive integer, and let $\boldsymbol{X}=(X_1,\dots,X_n)$ be a \textit{random vector} in $\rr^n$, that is,
$\boldsymbol{X}$ is a finite sequence of real-valued random variables defined on a common probability space.
Also let  $K>0$ and $\boldsymbol{\theta}=(\theta_1,\dots,\theta_n)\in \rr^n$, and recall that the random vector
$\boldsymbol{X}$ is said to be $K$-\textit{subgaussian at the direction}~$\boldsymbol{\theta}$ provided~that
\begin{equation} \label{e1.2}
\| \langle \boldsymbol{\theta},\boldsymbol{X}\rangle \|_{\psi_2} \mik K \|\boldsymbol{\theta}\|_2
\end{equation}
where
\begin{equation} \label{e1.3}
\langle \boldsymbol{\theta},\boldsymbol{X}\rangle= \sum_{i=1}^n \theta_i X_i
\end{equation}
is the inner product of $\boldsymbol{\theta}$ and $\boldsymbol{X}$, and $\|\boldsymbol{\theta}\|_2=(\theta_1^2+\dots+\theta_n^2)^{1/2}$
is the euclidean norm of the vector $\boldsymbol{\theta}$.

\subsection{The problem} \label{subsec1.2}

Let $\boldsymbol{X}=(X_1,\dots,X_n)$ be a random vector with $[-1,1]\text{-valued}$ entries,
and fix $\boldsymbol{\theta}=(\theta_1,\dots,\theta_n)\in \rr^n$. For every subset $H$ of $[n]\coloneqq \{1,\dots,n\}$ let
$\boldsymbol{\theta}_H\in \rr^n$ denote the vector defined by
\begin{equation} \label{e1.4}
\boldsymbol{\theta}_H=(\theta'_1,\dots,\theta'_n)\coloneqq
\begin{cases}
\theta'_i=\theta_i & \text{if } i\in H, \\
\theta'_i=0 & \text{otherwise}.
\end{cases}
\end{equation}
In this paper we address the question whether the subgaussian behavior of the random vector $\boldsymbol{X}$ at the
direction $\boldsymbol{\theta}$ is reflected to (and, conversely, whether it~is characterized~by) the typical subgaussian
behavior of $\boldsymbol{X}$ at the direction $\boldsymbol{\theta}_H$ where $H$ is a random subset of $[n]$ distributed
according to the uniform probability measure on $\{0,1\}^n$ or, more generally, according to the $p$-biased
measure\footnote{The $p$-biased measure $\mu_p$ is defined by $\mu_p(\{H\})=p^{|H|}(1-p)^{n-|H|}$ for every
$H\subseteq [n]$. (Here, and in the rest of this paper, we identify every $H\subseteq [n]$ with its indicator
function $\mathbf{1}_H\in \{0,1\}^n$.)} $\mu_p$ ($0<p<1$).

This question was motivated by a problem in density Ramsey theory; see Subsection \ref{subsec5.2} for more details.
Related questions---though of a somewhat different nature---have been studied in high-dimensional probability
and asymptotic convex geometry (see, \emph{e.g.}, \cite{BN}), as well as in the study of thin sets in harmonic analysis (see \cite{Pi}).
It is important to note that the main point in our approach lies in the fact that, apart from the boundedness condition
on $\boldsymbol{X}$, we make no further assumptions on the distributions of the random variables $X_1,\dots,X_n$
and on their correlation. (This level of generality is actually necessary for certain applications in combinatorics.)

\subsection{Examples} \label{subsec1.3}

At this point it is useful to give examples of bounded random vectors which are
subgaussian at a given direction. For concreteness we will restrict our discussion to the direction
$\boldsymbol{\sigma}=(1,\dots,1)\in\rr^n$, but corresponding examples can be given for any other direction.

Undoubtedly, the most important examples are random vectors with independent entries and, more generally, random
vectors which are bounded martingale difference sequences. Another interesting class of examples consists of Sidon
sets of characters in a compact abelian group $G$. (Here, we view $G$ as a probability space equipped with the
Haar probability measure, and we view every character as a complex-valued random variable on $G$;
see \cite{Pi} for details). Note, however, that all these examples are subgaussian at every direction.

A different---but quite relevant---example is a random vector whose entries exhibit high cancellation.
More precisely, fix a $[-1,1]$-valued random variable $Z$. Assume for simplicity that $n$ is even, say $n=2k$,
and fix a subset $T$~of~$[n]$ with $|T|=k$. We define $\boldsymbol{X}=(X_1,\dots,X_n)$ by setting $X_i=Z$ if $i\in T$,
and $X_i=-Z$ if $i\notin T$. Notice that $\langle \boldsymbol{\sigma},\boldsymbol{X}\rangle=0$,
and so $\boldsymbol{X}$ is $K$-subgaussian at the direction~$\boldsymbol{\sigma}$ for any $K>0$.
On the other hand, observe that $\langle \boldsymbol{\sigma}_T,\boldsymbol{X}\rangle=(n/2) Z$; consequently,
if $\boldsymbol{X}$ is $K$-subgaussian at the direction~$\boldsymbol{\sigma}_T$, then
$K\meg (\|Z\|_{\psi_2}/\sqrt{2})\, n^{1/2}$. Nevertheless, it is easy to see that we may select, with positive probability,
a subset $H$ of $[n]$ such that $\boldsymbol{X}$ is $O(1)$-subgaussian at the direction~$\boldsymbol{\sigma}_{H}$.

All the above examples can be combined together by taking convex combinations. Precisely, let $J$
be a nonempty finite set, and for every $j\in J$ let $\boldsymbol{X}_j$ be a random vector in $\rr^n$ whose entries
are either independent, or exhibit high cancellation in the sense we described above. If $\boldsymbol{X}$
is any convex combination of $(\boldsymbol{X}_j: j\in J)$, then clearly $\boldsymbol{X}$
is $O(1)\text{-subgaussian}$ at the direction $\boldsymbol{\sigma}$, but it is already not quite straightforward
to find a subset $H$ of $[n]$ with $|H|=n/2+O(\sqrt{n})$ such that $\boldsymbol{X}$
is $O(1)$-subgaussian at the direction~$\boldsymbol{\sigma}_{H}$.

\subsection{The main result} \label{subsec1.4}

Our main result shows that such a selection is possible in full generality.
Specifically, we have the following theorem; more precise quantitative versions are given in Proposition \ref{p3.1}
and Theorem \ref{t4.1} in the main text. (For~our conventions for asymptotic notation see Subsection \ref{subsec2.2};
recall that by $\mu_p$ we denote the $p$-biased measure on $\{0,1\}^n$.)
\begin{thm} \label{t1.1}
The following hold.
\begin{enumerate}
\item[(1)] Let $K>0$, and let\, $0<p<1$. Also let $n$ be a positive integer, let $\boldsymbol{X}$ be a random vector in $\rr^n$
with\, $[-1,1]\text{-valued}$ entries, and let $\boldsymbol{\theta}\in\rr^n$. If\, $\boldsymbol{X}$ is $K\text{-subgaussian}$
at the direction $\boldsymbol{\theta}$, then for every $C>0$
\begin{equation} \label{e1.5}
\mu_p\big( \{ H: \boldsymbol{X} \text{ is $C$-subgaussian at the direction } \boldsymbol{\theta}_H\}\big)\meg p-o_{C\to\infty;K,p}(1).
\end{equation}
$($Thus, the error term in \eqref{e1.5} does not dependent on the dimension~$n$,
the random vector $\boldsymbol{X}$, and the direction $\boldsymbol{\theta}$.$)$
\item[(2)] Conversely, let $K>0$, let $0<p<1$, and let $0<\gamma\mik 1$. Also let $n$ be a positive integer, let $\boldsymbol{X}$
be a random vector in\, $\rr^n$ with\, $[-1,1]\text{-valued}$ entries, and let\, $\boldsymbol{\theta}\in \rr^n$. If
\[ \mu_p\big( \{ H: \boldsymbol{X} \text{ is $K$-subgaussian at the direction } \boldsymbol{\theta}_H\}\big)\meg \gamma, \]
then $\boldsymbol{X}$ is $O_{K,p,\gamma}(1)$-subgaussian at the direction $\boldsymbol{\theta}$.
\end{enumerate}
\end{thm}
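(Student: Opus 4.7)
For part~(1), my approach is based on the moment characterization of the $\psi_2$-norm: $\|Y\|_{\psi_2}\mik C$ is equivalent to $\|Y\|_{L^{2k}}\mik c_0C\sqrt{k}$ for every $k\meg 1$. Realize $H\sim\mu_p$ via $H=\{i\in[n]:\xi_i=1\}$ for i.i.d.\ Bernoulli$(p)$ variables $\xi_i$ independent of $\boldsymbol{X}$, and decompose
\[
\langle\boldsymbol{\theta}_H,\boldsymbol{X}\rangle=p\,\langle\boldsymbol{\theta},\boldsymbol{X}\rangle+\sum_{i=1}^n(\xi_i-p)\theta_iX_i.
\]
The first summand is $K$-subgaussian in $\boldsymbol{X}$ by hypothesis. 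The second summand, conditioned on $\boldsymbol{X}$, is a sum of independent centered bounded random variables in $\xi$, so Hoeffding's inequality (using $|X_i|\mik 1$) bounds its $\psi_2(\xi)$-norm by $c\|\boldsymbol{\theta}\|_2$. Integrating in both variables produces $\ave\langle\boldsymbol{\theta}_H,\boldsymbol{X}\rangle^{2k}\mik D^{2k}k^k\|\boldsymbol{\theta}\|_2^{2k}$ with $D=D(K)$, and a Markov inequality combined with a union bound over a suitable range of $k$ shows that for most~$H$ the $2k$-th moment of $\langle\boldsymbol{\theta}_H,\boldsymbol{X}\rangle$ is controlled by $\|\boldsymbol{\theta}\|_2^{2k}$.

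The remaining (and main) difficulty in part~(1) is to replace $\|\boldsymbol{\theta}\|_2$ by $\|\boldsymbol{\theta}_H\|_2$ in the bound, which amounts to a lower bound on $\|\boldsymbol{\theta}_H\|_2^2$. When $\boldsymbol{\theta}$ is diffuse (say $\|\boldsymbol{\theta}\|_\infty\ll\|\boldsymbol{\theta}\|_2$), a variance computation shows that $\|\boldsymbol{\theta}_H\|_2^2$ concentrates around its mean $p\|\boldsymbol{\theta}\|_2^2$ with probability $1-o(1)$. When $\boldsymbol{\theta}$ has a few heavy coordinates, one instead restricts to the event that $H$ contains at least one of them, which has probability $\meg p$ and is precisely the source of the factor~$p$ in the conclusion. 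A threshold argument separating coordinates by the magnitude of $\theta_i$ and treating both regimes in parallel delivers the $\meg p-o_{C\to\infty}(1)$ bound.

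For part~(2), let $\calg\coloneqq\{H\subseteq[n]:\boldsymbol{X}\text{ is }K\text{-subgaussian at }\boldsymbol{\theta}_H\}$, so $\mu_p(\calg)\meg\gamma$. My plan is a fractional-cover argument: produce non-negative weights $(\lambda_H)_{H\in\calg}$ with $\sum_{H\ni i}\lambda_H=1$ for every $i\in[n]$ and $M\coloneqq\sum_H\lambda_H=O_{K,p,\gamma}(1)$. Given such a cover, $\boldsymbol{\theta}=\sum_H\lambda_H\boldsymbol{\theta}_H$, and the triangle inequality for $\|\cdot\|_{\psi_2}$ combined with Cauchy--Schwarz (using the identity $\sum_H\lambda_H\|\boldsymbol{\theta}_H\|_2^2=\|\boldsymbol{\theta}\|_2^2$) gives
\[
\|\langle\boldsymbol{\theta},\boldsymbol{X}\rangle\|_{\psi_2}\mik K\sum_H\lambda_H\|\boldsymbol{\theta}_H\|_2\mik K\sqrt{M}\,\|\boldsymbol{\theta}\|_2.
\]
By LP duality, such a cover with small $M$ exists precisely when $\mu_p(\calg\cap\{H\ni i\})$ is uniformly bounded below over $i$; in the regime $p+\gamma>1$ this is immediate from inclusion--exclusion, since $\mu_p(\calg\cap\{H\ni i\})\meg p+\gamma-1$, giving $M\mik\gamma/(p+\gamma-1)$.

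The main obstacle is the complementary regime $p+\gamma\mik 1$, where the elementary lower bound is vacuous and in fact some coordinates may not be contained in any $H\in\calg$. An observation that helps is that the set $U$ of such uncovered coordinates satisfies $|U|\mik\log(1/\gamma)/\log(1/(1-p))=O_{p,\gamma}(1)$: otherwise every $H\in\calg$ would miss a large set, forcing $\mu_p(\calg)\mik(1-p)^{|U|}$ to be too small. The contribution of the uncovered coordinates to $\langle\boldsymbol{\theta},\boldsymbol{X}\rangle$ is therefore a bounded number of terms that is controlled trivially by $|X_i|\mik 1$, while the covered part reduces to a sub-problem on the smaller index set $[n]\setminus U$ in which the conditional mass $\gamma/(1-p)^{|U|}$ of $\calg$ is larger; iterating this reduction a bounded number of times eventually lands in the $p+\gamma>1$ regime. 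Carefully tracking the quantitative losses through this reduction delivers the final $O_{K,p,\gamma}(1)$ constant.
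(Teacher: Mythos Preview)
Your plan for part~(1) is essentially the paper's own argument. The paper also writes
$\langle\boldsymbol{\theta}_H,\boldsymbol{X}\rangle = p\langle\boldsymbol{\theta},\boldsymbol{X}\rangle + \big(\langle\boldsymbol{\theta}_H,\boldsymbol{X}\rangle - p\langle\boldsymbol{\theta},\boldsymbol{X}\rangle\big)$, applies Hoeffding to the second piece conditionally on $\boldsymbol{X}$ (this is exactly where $|X_i|\mik 1$ is used), combines with the subgaussian tail of $\langle\boldsymbol{\theta},\boldsymbol{X}\rangle$, and then runs a Markov/union-bound discretization to control $\|\langle\boldsymbol{\theta}_H,\boldsymbol{X}\rangle\|_{\psi_2}$ for most~$H$. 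The paper phrases the discretization via dyadic tail levels rather than moments, but that is cosmetic. Your dichotomy on $\|\boldsymbol{\theta}\|_\infty$ (diffuse $\Rightarrow$ $\|\boldsymbol{\theta}_H\|_2^2$ concentrates near $p\|\boldsymbol{\theta}\|_2^2$; heavy coordinate $\Rightarrow$ condition on $i_0\in H$, yielding the factor~$p$) is exactly the paper's Proposition~4.8.

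Your plan for part~(2), however, is both far more complicated than needed and contains a genuine gap. You require \emph{equality} constraints $\sum_{H\ni i}\lambda_H=1$ (they are what make both $\boldsymbol{\theta}=\sum_H\lambda_H\boldsymbol{\theta}_H$ and $\sum_H\lambda_H\|\boldsymbol{\theta}_H\|_2^2=\|\boldsymbol{\theta}\|_2^2$ hold), but such a fractional cover need not exist even when every coordinate is covered and even when $p+\gamma>1$. For instance, with $n=3$ and $\calg=\{\{1,2\},\{2,3\}\}$ the system forces $\lambda_{\{1,2\}}=\lambda_{\{2,3\}}=1$ (from $i=1,3$) and then $\sum_{H\ni 2}\lambda_H=2$; taking $p$ close to~$1$ one still has $\mu_p(\calg)>1-p$, so the inclusion--exclusion bound $\mu_p(\calg\cap\{H\ni i\})\meg p+\gamma-1$ is satisfied, yet the equality LP is infeasible. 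In particular your LP-duality characterization is incorrect for equality constraints, and your iterative reduction does nothing here since $U=\emptyset$. (The bound $M\mik\gamma/(p+\gamma-1)$ you quote is the value for the \emph{inequality} LP, but with $\sum_{H\ni i}\lambda_H\meg 1$ neither of your two identities survives.)

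The paper's proof of part~(2) avoids all of this with a single observation: the map $H\mapsto\|\langle\boldsymbol{\theta}_H,\boldsymbol{X}\rangle\|_{\psi_2}$ has bounded differences $|\theta_i|$, so by the bounded differences inequality it concentrates around its mean~$M$ at scale $\|\boldsymbol{\theta}\|_2$. Since it is $\mik K\|\boldsymbol{\theta}\|_2$ on a set of $\mu_p$-measure $\meg\gamma$, one gets $M\mik\big(K+\sqrt{\ln(2/\gamma)}\big)\|\boldsymbol{\theta}\|_2$; the averaging identity $\langle\boldsymbol{\theta},\boldsymbol{X}\rangle=p^{-1}\ave_{H\sim\mu_p}\langle\boldsymbol{\theta}_H,\boldsymbol{X}\rangle$ then gives $\|\langle\boldsymbol{\theta},\boldsymbol{X}\rangle\|_{\psi_2}\mik p^{-1}M$. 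This is a three-line argument with an explicit constant, and it uses \emph{all} $H$ (weighted by $\mu_p$) rather than only $H\in\calg$, which is precisely what sidesteps the cover-feasibility issue you ran into.
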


\subsection{Sharpness of the probability} \label{subsec1.5}

Although the lower bound in \eqref{e1.5} is independent of the
direction $\boldsymbol{\theta}$, we note that the probability appearing on the left-hand side of \eqref{e1.5}
does depend upon the choice of $\boldsymbol{\theta}$. Indeed, if $\boldsymbol{\theta}=(1,\dots,1)\in \rr^n$,
then this probability is $1-o_{C\to\infty;K,p}(1)-o_{n\to\infty;p}(1)$. (See Corollary \ref{c4.11} in the main text.)
At~the other extreme, there exist random vectors and directions in $\rr^n$ for which the corresponding~probability
is at most $p+o_{n\to\infty;p,C}(1)$ for any fixed~$C>0$. (See Example~\ref{ex4.2}.)
In particular, the lower bound in \eqref{e1.5} is optimal.

\subsection{Related results/Outline of the argument} \label{subsec1.6}

Beyond its probabilistic content, Theorem \ref{t1.1} can also be placed in the general context of property testing
(see, \emph{e.g.}, \cite{G}). Indeed, Theorem \ref{t1.1} essentially asserts that subgaussianity, at any given direction, is testable.

Theorem \ref{t1.1} can also be viewed as a partial unconditionality result, in the spirit of the work of
Elton \cite{E1,E2} and Pajor \cite{Pa}. In fact, this is more than an analogy since part (1) of Theorem \ref{t1.1}
for $p=1/2$ and the direction $(1,\dots,1)\in\rr^n$ can be proved using the Sauer--Shelah lemma
which is a main tool in the proof of the Elton--Pajor theorem.

That said, the proof of the general case of Theorem \ref{t1.1} is quite intrinsic and, apart from a couple of basic
tools, it relies exclusively on properties of subgaussian random variables.

The first part is based on a large deviation inequality for the $\psi_2\text{-norm}$ of the random
variables $\langle \boldsymbol{\theta}_H,\boldsymbol{X}\rangle$ which can be seen as a reverse triangle inequality;
this is the content of Proposition \ref{p4.3} in the main text. With this inequality at our disposal,
we detect the behavior of the probability in \eqref{e1.5} using the $\ell_\infty$-norm $\|\boldsymbol{\theta}\|_{\infty}$
of the direction~$\boldsymbol{\theta}$. Specifically, if $\|\boldsymbol{\theta}\|_2=1$ and $\|\boldsymbol{\theta}\|_{\infty}$
is sufficiently small, say $\|\boldsymbol{\theta}\|_{\infty}\mik 1/L$, then we may select $C=O_{K,p,L}(1)$ such that
the corresponding probability is $1-o_{L\to\infty;K}(1)-o_{n\to\infty,p}(1)$. On the other hand, if
$\|\boldsymbol{\theta}\|_{\infty}\meg 1/L$, then we fix a coordinate $i_0\in [n]$ such that $|\theta_{i_0}|\meg 1/L$
and we proceed by conditioning on the set of all $H\subseteq [n]$ such that $i_0\in H$.
\begin{rem} \label{r1.2}
The argument is roughly analogous to the proof of Roth's theorem~\cite{Ro}.
Indeed, the case where the $\ell_\infty$-norm is small corresponds to case of small Fourier bias and it implies
pseudorandomness. On the other hand, the case where the $\ell_\infty$-norm is non-negligible corresponds to the case
of correlation with a character, and the proof takes advantage of this structural information.
\end{rem}
The proof of the second part of Theorem \ref{t1.1} is quite simple, and it follows from a standard application
of the bounded differences inequality.

\subsection{Structure of the paper} \label{subsec1.7}

We close this introduction by briefly discussing the contents of this paper. In Section \ref{sec2},
we fix our notation (which is mostly standard), and we recall some basic material which is needed for the proof of our main result.
In Section \ref{sec3} we give the proof of part (2) of Theorem \ref{t1.1}, and in Section \ref{sec4} we give the proof of part (1).
Finally, in Section \ref{sec5} we present and we comment on various extensions of Theorem~\ref{t1.1}.

\subsubsection*{Acknowledgments} We would like to thank the anonymous referee for carefully reading the paper
and for several helpful suggestions.


\section{Background material} \label{sec2}

\numberwithin{equation}{section}

\subsection{ \ } \label{subsec2.1}

By $\nn=\{0,1,\dots\}$ we denote the set of all natural numbers. Recall that for every positive integer $n$
we set $[n]\coloneqq \{1,\dots,n\}$. Moreover, for every finite set~$H$ by $|H|$ we denote its cardinality.

\subsection{ \ } \label{subsec2.2}

We use the following $o(\cdot)$ and $O(\cdot)$ notation. If $a_1,\dots,a_k$ are parameters
and $C$ is a positive real/integer, then we write $o_{C\to\infty;a_1,\dots,a_k}(X)$ to denote a quantity bounded in magnitude
by $X F_{a_1,\dots,a_k}(C)$ where $F_{a_1,\dots,a_k}$ is a function which depends on $a_1,\dots,a_k$ and goes to zero as $C\to\infty$.
Similarly, by $O_{a_1,\dots,a_k}(X)$ we denote a quantity bounded in magnitude by $X C_{a_1,\dots,a_k}$ where $C_{a_1,\dots,a_k}$
is a positive constant depending on the parameters $a_1,\dots,a_k$.

\subsection{ \ } \label{subsec2.3}

As we have mentioned, for every positive integer $n$ and every $0<p<1$ by $\mu_p$ we denote
the $p$-biased measure on $\{0,1\}^n$, that is, the probability measure on $\{0,1\}^n$ which is defined by setting
\begin{equation} \label{e2.1}
\mu_p(\{H\})= p^{|H|}(1-p)^{n-|H|}
\end{equation}
for every $H\subseteq [n]$. In particular, $\mu_{1/2}$ is the uniform probability measure on $\{0,1\}^n$.

\subsection{ \ } \label{subsec2.4}

For every vector $\boldsymbol{c}=(c_1,\dots,c_n)$ in $\rr^n$ and every $1\mik p\mik \infty$
by $\|\boldsymbol{c}\|_p$ we shall denote the $\ell_p$-norm of $\boldsymbol{c}$, that is,
$\|\boldsymbol{c}\|_p=(|c_1|^p+\dots+|c_n|^p)^{1/p}$ if $1\mik p<\infty$, and
$\|\boldsymbol{c}\|_\infty=\max\{|c_1|,\dots,|c_n|\}$.

\subsection{Properties of subgaussian random variables} \label{subsec2.5}

We will need the following properties of subgaussian random variables.
For a proof, as well as for a detailed discussion of related material, see \cite[Chapter 2]{V}.
\begin{prop} \label{p2.1}
Let $X$ be a real-valued random variable.
\begin{enumerate}
\item[(a)] If\, $X$ is subgaussian, then we have $\mathbb{P}(\{|X|\meg t\})\mik 2\exp(-t^2/\|X\|_{\psi_2}^2)$
for every $t>0$.
\item[(b)] Conversely, let $K>0$ and assume that\, $\mathbb{P}(\{|X|\meg t\})\mik 2\exp(-t^2/K^2)$
for every $t>0$. Then, $X$ is subgaussian and, moreover, $\|X\|_{\psi_2}\mik \sqrt{3}\, K$.
\end{enumerate}
\end{prop}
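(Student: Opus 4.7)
For part (a), the plan is a direct application of Markov's inequality. By the very definition of the Orlicz norm in \eqref{e1.1}, writing $s=\|X\|_{\psi_2}$ we have $\ave[\exp(X^2/s^2)]\mik 2$. Since $\{|X|\meg t\}=\{\exp(X^2/s^2)\meg \exp(t^2/s^2)\}$, a single application of Markov's inequality to the nonnegative random variable $\exp(X^2/s^2)$ yields
\[ \pp\big(\{|X|\meg t\}\big)\mik 2\exp(-t^2/s^2), \]
which is exactly the claimed tail bound.

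For part (b), I plan to express the Orlicz moment as an integral over the tails and then substitute the hypothesis. Specifically, for any $s>0$, the layer-cake formula together with the change of variables $u=e^{t^2/s^2}$ applied to $\ave[e^{(X/s)^2}]=\int_0^\infty \pp(\{e^{(X/s)^2}\meg u\})\, du$ gives
\[ \ave\big[e^{(X/s)^2}\big]=1+\int_0^\infty \pp\big(\{|X|\meg t\}\big)\,\frac{2t}{s^2}\,e^{t^2/s^2}\,dt. \]
Plugging the assumed tail estimate into this identity and computing the resulting (elementary) Gaussian integral produces, for any $s>K$,
\[ \ave\big[e^{(X/s)^2}\big]\mik 1+\frac{4}{s^2}\int_0^\infty t\exp\!\big(-t^2(1/K^2-1/s^2)\big)\,dt=1+\frac{2K^2}{s^2-K^2}. \]
The final step is to observe that the right-hand side is $\mik 2$ exactly when $s^2\meg 3K^2$, so that $s=\sqrt{3}\,K$ is an admissible choice in the definition \eqref{e1.1} of $\|X\|_{\psi_2}$, which gives the claimed bound $\|X\|_{\psi_2}\mik \sqrt{3}\,K$.

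The only real obstacle here is bookkeeping of constants: one has to match the factor $2$ coming from the tail hypothesis, the Jacobian factor $2t/s^2$ arising from the change of variables, and the evaluation of $\int_0^\infty t\,e^{-\alpha t^2}\,dt=1/(2\alpha)$ carefully enough to obtain precisely the threshold $\sqrt{3}\,K$ and not some larger constant multiple of $K$. No tools beyond Markov's inequality and the layer-cake identity (which implicitly uses Fubini) are required.
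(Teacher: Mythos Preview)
Your proof is correct; both steps are the standard ones. For part~(a), the only minor point worth mentioning is that the infimum in \eqref{e1.1} is attained (by Fatou's lemma applied along a sequence $s_n\downarrow \|X\|_{\psi_2}$), so writing $s=\|X\|_{\psi_2}$ and using $\ave[\exp(X^2/s^2)]\mik 2$ directly is legitimate. For part~(b), your computation is accurate and the threshold $\sqrt{3}\,K$ comes out exactly.

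The paper itself does not prove Proposition~\ref{p2.1}; it simply states it and refers the reader to \cite[Chapter~2]{V} for a proof and further discussion. Your argument is essentially the one found there, so there is nothing to compare.
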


\subsection{Hoeffding's inequality and the bounded differences inequality} \label{subsec2.6}

In various places in the paper, we will apply Hoeffding's inequality and the bounded differences inequality.
We will use these basic inequalities in a form which, although less general, is better suited to our needs.
(The standard forms of these inequalities and their proofs can be found, \emph{e.g.}, in \cite[Theorem 2.8]{BLM}
and \cite[Theorem~6.2]{BLM} respectively.)

Precisely, we will need the following consequence of Hoeffding's inequality.
\begin{prop} \label{p2.2}
Let $n$ be a positive integer, and let $\boldsymbol{c}=(c_1,\dots,c_n)\in \rr^n\setminus\{0\}$. Also let $0<p<1$.
Then for any $t>0$ we have
\begin{equation} \label{e2.2}
\mu_p\Big(\Big\{ H : \Big| \sum_{i\in H}c_i -p \sum_{i=1}^n c_i \Big| \meg t \Big\}\Big) \mik
2\exp\Big(-\frac{2t^2}{\|\boldsymbol{c}\|_2^2}\Big).
\end{equation}
\end{prop}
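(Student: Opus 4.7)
The statement is just a reformulation of the standard Hoeffding inequality for bounded independent summands, tailored to the $p$-biased cube. My plan is to first recast the left-hand side as a deviation probability for a sum of independent bounded random variables, and then quote Hoeffding's inequality in its classical form.

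More concretely, I would identify every $H\subseteq [n]$ with its indicator vector $(\xi_1,\dots,\xi_n)\in\{0,1\}^n$. Under $\mu_p$, the coordinates $\xi_1,\dots,\xi_n$ are independent Bernoulli random variables with parameter $p$, so that
\[
\sum_{i\in H} c_i \;=\; \sum_{i=1}^n c_i\, \xi_i \qquad \text{and}\qquad \ave\Big[\sum_{i=1}^n c_i\,\xi_i\Big] \;=\; p\sum_{i=1}^n c_i.
\]
Moreover, each summand $c_i\xi_i$ lies almost surely in the interval $[\min(0,c_i), \max(0,c_i)]$, whose length is exactly $|c_i|$.

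Now I would invoke the classical Hoeffding inequality (as stated, for instance, in \cite[Theorem 2.8]{BLM}): for independent random variables $Y_1,\dots,Y_n$ with $Y_i$ supported in an interval of length $\ell_i$, the sum $S=\sum_i Y_i$ satisfies
\[
\pp\big(\{|S-\ave[S]|\meg t\}\big) \;\mik\; 2\exp\Big(-\frac{2t^2}{\sum_{i=1}^n \ell_i^2}\Big)
\]
for every $t>0$. Applying this with $Y_i=c_i\xi_i$ and $\ell_i=|c_i|$ gives $\sum_i \ell_i^2 = \|\boldsymbol{c}\|_2^2$, which yields \eqref{e2.2} immediately.

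The step that does any work at all is the reduction to an independent-sum deviation, which is essentially automatic once one writes out the $p$-biased measure as a product of Bernoullis. There is no real obstacle: the proposition is a literal specialization of Hoeffding's inequality, and the role of the hypothesis $\boldsymbol{c}\neq 0$ is only to ensure that the right-hand side is well-defined (equivalently, that the probability is non-trivial).
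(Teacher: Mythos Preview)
Your proposal is correct and matches the paper's approach: the paper does not give a separate proof of this proposition, but simply states it as a direct consequence of Hoeffding's inequality (citing \cite[Theorem~2.8]{BLM}), which is exactly the reduction you carry out.
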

We will also need the following special case of the bounded differences inequality.
\begin{prop} \label{p2.3}
Let $n$ be a positive integer, let $f\colon \{0,1\}^n\to\rr$ be a function, and let $\boldsymbol{c}=(c_1,\dots,c_n)\in \rr^n\setminus\{0\}$
such that for every $i\in [n]$ and every $H\subseteq [n]\setminus \{i\}$
\begin{equation} \label{e2.3}
|f(H\cup\{i\})-f(H)|\mik c_i.
\end{equation}
Also let $0<p<1$. Then, setting $M\coloneqq \underset{H\sim \mu_p}{\ave} f(H)$, for any $t>0$ we have
\begin{equation} \label{e2.4}
\mu_p\big( \big\{H: | f(H)-M|\meg t\big\}\big)\mik 2\exp\Big(-\frac{2t^2}{\|\boldsymbol{c}\|^2_2}\Big).
\end{equation}
\end{prop}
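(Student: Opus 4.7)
The plan is to recognize Proposition \ref{p2.3} as a direct specialization of the standard bounded differences (McDiarmid) inequality, under the natural identification of $(\{0,1\}^n,\mu_p)$ with a product of Bernoulli spaces. The advertised quantitative form is precisely the one that McDiarmid's inequality delivers, so essentially no work beyond setting up the correspondence is required.

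First I would set up the product structure. Write $(\{0,1\},\nu_p)$ for the probability space with $\nu_p(\{1\})=p$ and $\nu_p(\{0\})=1-p$, and equip $\{0,1\}^n$ with the product measure $\nu_p^{\otimes n}$. Using the identification of each $H\subseteq[n]$ with its indicator $\mathbf{1}_H\in\{0,1\}^n$ (as fixed in the introduction), one checks from \eqref{e2.1} that $\nu_p^{\otimes n}=\mu_p$. Letting $Y_1,\dots,Y_n$ be the coordinate projections, these are independent $\{0,1\}$-valued random variables with $\pp(Y_i=1)=p$, and a random $H\sim\mu_p$ is precisely the set $\{i:Y_i=1\}$.

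Next I would verify the bounded differences hypothesis in its usual form. View $f$ as $f(Y_1,\dots,Y_n)$. Fix $i\in[n]$ and arbitrary values $y_1,\dots,y_{i-1},y_{i+1},\dots,y_n\in\{0,1\}$, and set $H\coloneqq\{j\ne i: y_j=1\}$. Then $f(y_1,\dots,0,\dots,y_n)=f(H)$ while $f(y_1,\dots,1,\dots,y_n)=f(H\cup\{i\})$, so the hypothesis of Proposition \ref{p2.3} yields
\[
\sup_{y_i,y_i'\in\{0,1\}}\bigl|f(y_1,\dots,y_i,\dots,y_n)-f(y_1,\dots,y_i',\dots,y_n)\bigr|\ \mik\ c_i.
\]
This is exactly the hypothesis of the standard bounded differences inequality with constants $c_1,\dots,c_n$; see \cite[Theorem~6.2]{BLM}.

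Finally I would apply that inequality to $f(Y_1,\dots,Y_n)$. Its conclusion gives, for every $t>0$,
\[
\pp\bigl(\{|f(Y_1,\dots,Y_n)-M|\meg t\}\bigr)\ \mik\ 2\exp\!\Big(-\tfrac{2t^2}{\sum_{i=1}^n c_i^2}\Big)\ =\ 2\exp\!\Big(-\tfrac{2t^2}{\|\boldsymbol{c}\|_2^2}\Big),
\]
and transporting this back through the identification $H=\{i:Y_i=1\}$ yields \eqref{e2.4}. There is no genuine obstacle: the only mild point to check is that the $p$-biased measure on $\{0,1\}^n$ really is the product of the $\nu_p$'s (so that McDiarmid applies), which is immediate from \eqref{e2.1}.
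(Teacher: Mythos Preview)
Your proposal is correct and matches the paper's treatment: the paper does not give a proof of Proposition~\ref{p2.3} but simply presents it as a special case of the bounded differences inequality, citing \cite[Theorem~6.2]{BLM}. Your write-up just makes the identification $\mu_p=\nu_p^{\otimes n}$ and the verification of the bounded differences hypothesis explicit, which is exactly the intended reduction.
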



\section{Proof of Theorem \ref{t1.1}: part (2)} \label{sec3}

\numberwithin{equation}{section}

We have the following, more informative, version of part (2) of Theorem \ref{t1.1}.
\begin{prop} \label{p3.1}
Let $K>0$, let $0<p<1$, let $0<\gamma\mik 1$, and set
\begin{equation} \label{e3.1}
C=C(K,p,\gamma)\coloneqq p^{-1}\, \big(K+\sqrt{\ln(2/\gamma)}\big).
\end{equation}
Also let $n$ be a positive integer, let $\boldsymbol{X}=(X_1,\dots,X_n)$ be a random vector in $\rr^n$ with
$\|X_i\|_{\psi_2}\mik 1$ for every $i\in [n]$, and let\, $\boldsymbol{\theta}\in \rr^n\setminus \{0\}$. If
\begin{equation} \label{e3.2}
\mu_p\big( \{ H: \boldsymbol{X} \text{ is $K$-subgaussian at the direction } \boldsymbol{\theta}_H\}\big)\meg \gamma,
\end{equation}
then $\boldsymbol{X}$ is $C$-subgaussian at the direction $\boldsymbol{\theta}$.
\end{prop}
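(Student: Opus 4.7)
The plan is to exploit the identity $\mathbb{E}_H[\langle\boldsymbol{\theta}_H,\boldsymbol{X}\rangle]=p\langle\boldsymbol{\theta},\boldsymbol{X}\rangle$---valid whenever $H\sim\mu_p$ is drawn independently of $\boldsymbol{X}$---to transfer the $\psi_2$-control of $Y_H\coloneqq \langle\boldsymbol{\theta}_H,\boldsymbol{X}\rangle$ that holds on the set $G\coloneqq \{H:\boldsymbol{X}\text{ is }K\text{-subgaussian at }\boldsymbol{\theta}_H\}$ (of $\mu_p$-measure at least $\gamma$) to the full random variable $Y\coloneqq \langle\boldsymbol{\theta},\boldsymbol{X}\rangle$. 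The bridge will be the bounded differences inequality (Proposition~\ref{p2.3}), applied, for each fixed outcome $\omega$ of the $\boldsymbol{X}$-randomness, to the function $H\mapsto Y_H(\omega)$ on $\{0,1\}^n$: its $\mu_p$-mean is $pY(\omega)$ and its one-coordinate increments are $|\theta_i X_i(\omega)|$, which can be handled pathwise by setting $c_i=|\theta_i X_i(\omega)|$ and controlling $\sum_i\theta_i^2 X_i(\omega)^2$ in expectation by $\|\boldsymbol{\theta}\|_2^2$ via the bound $\|X_i\|_{\psi_2}\mik 1$ (which forces $\ave[X_i^2]\mik 1$).

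Concretely, I would first derive a tail estimate for $|Y|$. For each fixed $\omega$ and every $s>0$, Proposition~\ref{p2.3} yields
\[\mu_p\bigl(\{H:|Y_H(\omega)-pY(\omega)|\meg s\}\bigr)\mik 2\exp\bigl(-2s^2/\|\boldsymbol{\theta}\|_2^2\bigr).\]
On the other hand, for every $H\in G$ the hypothesis gives $\|Y_H\|_{\psi_2}\mik K\|\boldsymbol{\theta}_H\|_2\mik K\|\boldsymbol{\theta}\|_2$, whence $\mathbb{P}_{\boldsymbol{X}}(|Y_H|\meg u)\mik 2\exp(-u^2/(K\|\boldsymbol{\theta}\|_2)^2)$ by Proposition~\ref{p2.1}(a). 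The key observation is that $|Y(\omega)|\meg t$ together with $|Y_H(\omega)-pY(\omega)|<s$ forces $|Y_H(\omega)|>pt-s$; combining the two estimates by a Fubini computation (using independence of $H$ and $\boldsymbol{X}$) yields
\[\gamma\cdot \mathbb{P}(|Y|\meg t)\mik 2\exp\bigl(-(pt-s)^2/(K\|\boldsymbol{\theta}\|_2)^2\bigr)+2\exp\bigl(-2s^2/\|\boldsymbol{\theta}\|_2^2\bigr)\]
for every $0<s<pt$.

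Finally, I would choose $s=t\sqrt{\ln(2/\gamma)}/C$ with $C=p^{-1}(K+\sqrt{\ln(2/\gamma)})$, which makes $(pt-s)/K=t/C$ by virtue of the identity $pC-K=\sqrt{\ln(2/\gamma)}$; since $\gamma\mik 1$ ensures $2\ln(2/\gamma)\meg 1$, this choice also forces $2s^2/\|\boldsymbol{\theta}\|_2^2\meg t^2/(C\|\boldsymbol{\theta}\|_2)^2$, so both exponential terms above are dominated by $\exp(-t^2/(C\|\boldsymbol{\theta}\|_2)^2)$. Proposition~\ref{p2.1}(b) then converts the resulting Gaussian tail for $Y$ into the desired $\psi_2$-estimate. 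The main obstacle I expect is the careful bookkeeping required to squeeze out the \emph{exact} constant $C=p^{-1}(K+\sqrt{\ln(2/\gamma)})$ rather than a multiple of it: while the tail-plus-Fubini structure above robustly yields $\|Y\|_{\psi_2}=O_{K,p,\gamma}(\|\boldsymbol{\theta}\|_2)$, pinning down the stated formula may instead demand a direct Jensen-based manipulation of $\ave[\exp(Y^2/(C\|\boldsymbol{\theta}\|_2)^2)]$ using the identity $Y=p^{-1}\ave_H[Y_H]$, so as to bypass the slack introduced by converting tails back into Orlicz norms.
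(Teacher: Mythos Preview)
Your plan contains a genuine gap at the very first display. You apply Proposition~\ref{p2.3} to the function $H\mapsto Y_H(\omega)$ with increments $c_i=|\theta_i X_i(\omega)|$, and then write the bound with $\|\boldsymbol{\theta}\|_2^2$ in the denominator. But Proposition~\ref{p2.3} gives $2\exp\bigl(-2s^2/\sum_i\theta_i^2X_i(\omega)^2\bigr)$, and under the hypothesis of Proposition~\ref{p3.1} you only know $\|X_i\|_{\psi_2}\mik 1$, not $|X_i(\omega)|\mik 1$. Controlling $\sum_i\theta_i^2X_i^2$ \emph{in expectation} (which is all that $\ave[X_i^2]\mik 1$ buys you) is of no help here, since $x\mapsto\exp(-2s^2/x)$ is concave for large $x$ and Jensen goes the wrong way; indeed, if all the $X_i$ equal a common standard Gaussian $Z$, then $\sum_i\theta_i^2X_i^2=\|\boldsymbol{\theta}\|_2^2Z^2$ and your pointwise bound fails whenever $|Z|>1$. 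So the Fubini step that follows cannot be executed as written. (Even if the $X_i$ were $[-1,1]$-valued, your tail-based route loses a factor of roughly $\sqrt{3}\,\sqrt{1+\log_2(2/\gamma)}$ when converting back via Proposition~\ref{p2.1}(b), as you yourself anticipate.)

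The paper's proof is essentially the ``direct Jensen-based manipulation'' you mention in your last sentence, but implemented at the level of the Orlicz norm rather than pathwise. The key move is to apply bounded differences not to $H\mapsto Y_H(\omega)$ but to the \emph{deterministic} function $H\mapsto\|Y_H\|_{\psi_2}$: by the triangle inequality its $i$-th increment is at most $\|\theta_iX_i\|_{\psi_2}\mik|\theta_i|$, so Proposition~\ref{p2.3} gives concentration of $\|Y_H\|_{\psi_2}$ about its $\mu_p$-mean $M$ with variance proxy exactly $\|\boldsymbol{\theta}\|_2^2$. Intersecting the event $\{\|Y_H\|_{\psi_2}\meg M-t_0\}$ (for $t_0=\sqrt{\ln(2/\gamma)}\,\|\boldsymbol{\theta}\|_2$) with the good set $G$ yields some $H$ with $M\mik K\|\boldsymbol{\theta}\|_2+t_0$. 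The identity $Y=p^{-1}\ave_{H}[Y_H]$ and the triangle inequality for $\|\cdot\|_{\psi_2}$ then give $\|Y\|_{\psi_2}\mik p^{-1}M\mik C\|\boldsymbol{\theta}\|_2$ on the nose, with no loss of constants.
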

\begin{rem} \label{r3.2}
We do not know which is the optimal dependence of the constant $C(K,p,\gamma)$ with respect to the parameters $K,p$ and $\gamma$.
The referee noted that the dependence on $p$ could be improved; observe that the parameter $p$ is important in the sparse regime,
that is, when $p=o_{n\to\infty}(1)$.
\end{rem}
Proposition \ref{p3.1} is based on two auxiliary results. The first one is an elementary identity
which expresses the random variable $\langle \boldsymbol{\theta},\boldsymbol{X}\rangle$ as a linear combination of the
random variables $\langle \boldsymbol{\theta}_H,\boldsymbol{X}\rangle$.
\begin{fact} \label{f3.3}
Let $p,n,\boldsymbol{X},\boldsymbol{\theta}$ be as in Proposition \emph{\ref{p3.1}}. Then we have
\begin{equation} \label{e3.3}
\langle \boldsymbol{\theta},\boldsymbol{X}\rangle = p^{-1} \sum_{H\subseteq [n]} \mu_p(\{H\})\,
\langle \boldsymbol{\theta}_H,\boldsymbol{X}\rangle.
\end{equation}
In particular,
\begin{equation} \label{e3.4}
\|\langle \boldsymbol{\theta},\boldsymbol{X}\rangle \|_{\psi_2} \mik p^{-1}\,\,
\underset{H\sim \mu_p}{\ave} \|\langle \boldsymbol{\theta}_H,\boldsymbol{X}\rangle \|_{\psi_2}.
\end{equation}
\end{fact}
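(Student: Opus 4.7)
The plan is to verify the identity \eqref{e3.3} by a direct double-sum swap, and then derive \eqref{e3.4} from the triangle inequality for the Orlicz norm $\|\cdot\|_{\psi_2}$.

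For the identity, I would expand the right-hand side of \eqref{e3.3} and interchange the order of summation:
\[
p^{-1}\sum_{H\subseteq [n]}\mu_p(\{H\})\,\langle\boldsymbol{\theta}_H,\boldsymbol{X}\rangle
= p^{-1}\sum_{H\subseteq [n]}\mu_p(\{H\})\sum_{i\in H}\theta_i X_i
= p^{-1}\sum_{i=1}^n \theta_i X_i \sum_{H\ni i}\mu_p(\{H\}).
\]
The key observation is that, under $\mu_p$, each coordinate $i$ is included in $H$ with probability exactly $p$; equivalently, $\sum_{H\ni i}\mu_p(\{H\})=p$. Substituting this gives $\sum_{i=1}^n\theta_i X_i=\langle\boldsymbol{\theta},\boldsymbol{X}\rangle$, which is \eqref{e3.3}.

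For \eqref{e3.4}, I would use the fact that $\|\cdot\|_{\psi_2}$ is a norm (being the Orlicz norm associated with the convex function $\psi_2$), so in particular it satisfies the (generalized) triangle inequality with respect to convex combinations. Applying this to the representation of $\langle\boldsymbol{\theta},\boldsymbol{X}\rangle$ obtained in \eqref{e3.3} yields
\[
\|\langle\boldsymbol{\theta},\boldsymbol{X}\rangle\|_{\psi_2}
\mik p^{-1}\sum_{H\subseteq[n]}\mu_p(\{H\})\,\|\langle\boldsymbol{\theta}_H,\boldsymbol{X}\rangle\|_{\psi_2}
= p^{-1}\,\underset{H\sim\mu_p}{\ave}\,\|\langle\boldsymbol{\theta}_H,\boldsymbol{X}\rangle\|_{\psi_2}.
\]

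There is no real obstacle here; the statement is an essentially formal consequence of two facts, namely that the inclusion probability of each coordinate under $\mu_p$ is $p$, and that the $\psi_2$-norm is a norm. The only point worth being careful about is that one needs a finite sum (which is the case here, since $H$ ranges over the finite set $\{0,1\}^n$) in order to invoke the triangle inequality without any measurability/integrability considerations on the $\psi_2$-norm viewed as a function of $H$.
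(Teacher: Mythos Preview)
Your proof is correct and follows essentially the same approach as the paper: swap the order of summation, use that $\sum_{H\ni i}\mu_p(\{H\})=p$ to obtain \eqref{e3.3}, and then apply the triangle inequality for $\|\cdot\|_{\psi_2}$ to deduce \eqref{e3.4}. The only difference is that you spell out slightly more detail (such as the finiteness of the sum and the normhood of $\|\cdot\|_{\psi_2}$) than the paper does.
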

\begin{proof}
Observe that
\[ \sum_{H\subseteq [n]} \mu_p(\{H\})\, \langle \boldsymbol{\theta}_H,\boldsymbol{X}\rangle =
\sum_{i=1}^n \theta_i X_i \Big( \sum_{i\in H\subseteq [n]} \mu_p(\{H\})\Big) = p\, \langle \boldsymbol{\theta},\boldsymbol{X}\rangle. \]
The estimate in \eqref{e3.4} follows from this identity and the triangle inequality.
\end{proof}
The second auxiliary result is the following, fairly straightforward, consequence of the bounded differences inequality;
we isolate this consequence for future use.
\begin{lem} \label{l3.4}
Let $p,n,\boldsymbol{X},\boldsymbol{\theta}$ be as in Proposition \emph{\ref{p3.1}}. Then, setting
\begin{equation} \label{e3.5}
M\coloneqq \underset{H\sim \mu_p}{\ave} \|\langle \boldsymbol{\theta}_H,\boldsymbol{X}\rangle \|_{\psi_2},
\end{equation}
for any $t>0$ we have
\begin{equation} \label{e3.6}
\mu_p\big( \big\{H: \big| \|\langle \boldsymbol{\theta}_H,\boldsymbol{X}\rangle \|_{\psi_2}-M\big|\meg t\big\}\big)\mik
2\exp\Big(-\frac{2t^2}{\|\boldsymbol{\theta}\|^2_2}\Big).
\end{equation}
\end{lem}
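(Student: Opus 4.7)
The plan is to apply the bounded differences inequality (Proposition \ref{p2.3}) to the function $f\colon \{0,1\}^n \to \rr$ defined by
\[ f(H) \coloneqq \|\langle \boldsymbol{\theta}_H, \boldsymbol{X}\rangle\|_{\psi_2}, \]
with the vector of coordinate sensitivities $\boldsymbol{c} = (|\theta_1|, \dots, |\theta_n|)$. Note that $\|\boldsymbol{c}\|_2 = \|\boldsymbol{\theta}\|_2$, so once the Lipschitz condition \eqref{e2.3} is verified with these $c_i$'s, the bound \eqref{e3.6} follows immediately from \eqref{e2.4}.

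The only thing to check, therefore, is that for every $i \in [n]$ and every $H \subseteq [n]\setminus\{i\}$ one has $|f(H\cup\{i\}) - f(H)| \mik |\theta_i|$. This is where the definition of $\boldsymbol{\theta}_H$ is used: by construction,
\[ \boldsymbol{\theta}_{H\cup\{i\}} = \boldsymbol{\theta}_H + \theta_i \mathbf{e}_i, \]
where $\mathbf{e}_i$ is the $i$-th standard basis vector of $\rr^n$, so
\[ \langle \boldsymbol{\theta}_{H\cup\{i\}}, \boldsymbol{X}\rangle = \langle \boldsymbol{\theta}_H, \boldsymbol{X}\rangle + \theta_i X_i. \]
Since the $\psi_2$-norm is a norm, the reverse triangle inequality gives
\[ \big| f(H\cup\{i\}) - f(H)\big| \mik \|\theta_i X_i\|_{\psi_2} = |\theta_i|\, \|X_i\|_{\psi_2} \mik |\theta_i|, \]
using the hypothesis $\|X_i\|_{\psi_2}\mik 1$ in the last step.

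With the Lipschitz condition established, Proposition \ref{p2.3} applied to $f$ and $\boldsymbol{c} = (|\theta_1|,\dots,|\theta_n|)$ yields exactly \eqref{e3.6}, since $\|\boldsymbol{c}\|_2^2 = \|\boldsymbol{\theta}\|_2^2$. There is no genuine obstacle here; the content of the lemma is simply the observation that $H \mapsto \|\langle \boldsymbol{\theta}_H, \boldsymbol{X}\rangle\|_{\psi_2}$ is coordinate-wise Lipschitz with constants $|\theta_i|$, and the rest is a black-box invocation of the bounded differences inequality.
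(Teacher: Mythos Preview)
Your proof is correct and follows essentially the same approach as the paper: verify the coordinate-wise Lipschitz bound $\big|\|\langle \boldsymbol{\theta}_{H\cup\{i\}},\boldsymbol{X}\rangle\|_{\psi_2}-\|\langle \boldsymbol{\theta}_H,\boldsymbol{X}\rangle\|_{\psi_2}\big|\mik |\theta_i|$ via the triangle inequality and the hypothesis $\|X_i\|_{\psi_2}\mik 1$, then invoke Proposition~\ref{p2.3}. Your version is in fact slightly more careful in writing $|\theta_i|$ rather than $\theta_i$, since the entries of $\boldsymbol{\theta}$ need not be nonnegative.
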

\begin{proof}
By the triangle inequality, for every $i\in [n]$ and every $H\subseteq [n]\setminus \{i\}$ we have
\[ \big| \|\langle \boldsymbol{\theta}_{H\cup\{i\}},\boldsymbol{X}\rangle \|_{\psi_2}-
\|\langle \boldsymbol{\theta}_H,\boldsymbol{X}\rangle \|_{\psi_2}\big| \mik \|\theta_i X_i\|_{\psi_2} \mik \theta_i.\]
Using this observation, the result follows from Proposition \ref{p2.3}.
\end{proof}
We are now ready to proceed to the proof of Proposition \ref{p3.1}.
\begin{proof}[Proof of Proposition \emph{\ref{p3.1}}]
Setting $t_0\coloneqq \sqrt{\ln(2/\gamma)}\,\, \|\boldsymbol{\theta}\|_2>0$, by \eqref{e3.6}, we have
\[ \mu_p\big( \big\{H: \big| \|\langle \boldsymbol{\theta}_H,\boldsymbol{X}\rangle \|_{\psi_2}-M\big|\meg t_0\big\}\big)\mik
\frac{\gamma}{2}. \]
Thus, by \eqref{e3.2}, we may select $H\subseteq [n]$ such that
\begin{enumerate}
\item[$\bullet$] $\|\langle \boldsymbol{\theta}_H,\boldsymbol{X}\rangle \|_{\psi_2}\mik K \|\boldsymbol{\theta}_H\|_2
\mik K\|\boldsymbol{\theta}\|_2$, and
\item[$\bullet$] $M\mik \|\langle \boldsymbol{\theta}_H,\boldsymbol{X}\rangle \|_{\psi_2} +t_0$.
\end{enumerate}
Therefore, $M\mik K\|\boldsymbol{\theta}\|_2+t_0$. By \eqref{e3.4}, \eqref{e3.5} and the choice of $C$ in \eqref{e3.1}, we conclude that
$\|\langle \boldsymbol{\theta},\boldsymbol{X}\rangle \|_{\psi_2}\mik C\|\boldsymbol{\theta}\|_2$, as desired.
\end{proof}


\section{Proof of Theorem \ref{t1.1}: part (1)} \label{sec4}

\numberwithin{equation}{section}

\subsection{ \ } \label{subsec4.1}

This section is devoted to the proof of the following theorem.
\begin{thm} \label{t4.1}
Let $K>0$, let $0<p<1$, let $0< \eta <p$, and set
\begin{equation} \label{e4.1}
C=C(K,p,\eta)\coloneqq 18\, \frac{(K+1)}{p} \, \log_2\Big(\frac{4}{\eta}\Big).
\end{equation}
Also let $n$ be a positive integer, let $\boldsymbol{X}$ be a random vector in $\rr^n$ with $[-1,1]\text{-valued}$ entries,
and let $\boldsymbol{\theta}\in\rr^n\setminus \{0\}$. If\, $\boldsymbol{X}$ is $K\text{-subgaussian}$
at the direction~$\boldsymbol{\theta}$, then
\begin{equation} \label{e4.2}
\mu_p\big( \{ H: \boldsymbol{X} \text{ is $C$-subgaussian at the direction } \boldsymbol{\theta}_H\}\big)\meg p-\eta.
\end{equation}
\end{thm}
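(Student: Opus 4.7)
The plan is to normalise $\|\boldsymbol{\theta}\|_2=1$ and to decouple the target ratio $\|\langle\boldsymbol{\theta}_H,\boldsymbol{X}\rangle\|_{\psi_2}/\|\boldsymbol{\theta}_H\|_2$ into an upper bound on the numerator and a lower bound on the denominator, each valid on a $\mu_p$-event of sufficiently large measure. The threshold $L$ separating the two regimes will ultimately be chosen of order $\log(4/\eta)$ (up to a $1/p$ factor absorbed into $C$).

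\textbf{Numerator (reverse triangle inequality).} I will first prove that, outside a $\mu_p$-set of measure at most $\eta/2$,
\[ \|\langle\boldsymbol{\theta}_H,\boldsymbol{X}\rangle\|_{\psi_2}=O\!\left(\frac{K+1}{p}\right). \]
This is the announced Proposition \ref{p4.3}, playing the role of a reverse of the triangle bound in Fact \ref{f3.3}. The starting point is the pointwise identity $\langle\boldsymbol{\theta}_H,\boldsymbol{X}\rangle=p\langle\boldsymbol{\theta},\boldsymbol{X}\rangle+R_H$, where
\[ R_H\coloneqq \sum_{i=1}^n(\mathbf{1}_{i\in H}-p)\,\theta_i X_i. \]
The first term contributes at most $pK$ to the $\psi_2$-norm. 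For $R_H$, the key move is to exchange the two sources of randomness: for each fixed realisation $\omega$ of $\boldsymbol{X}$, Hoeffding's inequality (Proposition \ref{p2.2}) applied to the coefficient vector $(\theta_iX_i(\omega))_{i=1}^n$, whose $\ell_2$-norm is bounded by $\|\boldsymbol{\theta}\|_2=1$, yields subgaussian concentration of $R_H$ in the variable $H$. Fubini combined with Markov's inequality in $H$ then produces, for all but a small $\mu_p$-fraction of $H$'s, a subgaussian-type tail bound for $R_H$ in $\omega$; by Proposition \ref{p2.1}(b), this is equivalent to the desired control of $\|R_H\|_{\psi_2}$.

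\textbf{Denominator ($\ell_\infty$ dichotomy).} I will then show that the event $\{H:\|\boldsymbol{\theta}_H\|_2\meg 1/L\}$ has $\mu_p$-measure at least $p-\eta/2$, by splitting on $\|\boldsymbol{\theta}\|_\infty$. If $\|\boldsymbol{\theta}\|_\infty\meg 1/L$, fix a coordinate $i_0$ with $|\theta_{i_0}|\meg 1/L$; the event $\{i_0\in H\}$ has $\mu_p$-measure exactly $p$, and on it $\|\boldsymbol{\theta}_H\|_2\meg|\theta_{i_0}|\meg 1/L$. If instead $\|\boldsymbol{\theta}\|_\infty<1/L$, Proposition \ref{p2.2} applied to the vector $(\theta_i^2)_{i=1}^n$, whose $\ell_2$-norm squared is at most $\|\boldsymbol{\theta}\|_\infty^2\mik 1/L^2$, shows that $\|\boldsymbol{\theta}_H\|_2^2=\sum_{i\in H}\theta_i^2$ concentrates sharply around $p$; for $L$ chosen large enough as a function of $p,\eta$ (compatible with the stated form of $C$), the lower bound $\|\boldsymbol{\theta}_H\|_2\meg 1/L$ holds off a $\mu_p$-set of measure $\mik\eta/2$. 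Here the dichotomy parallels Remark \ref{r1.2}: the small-$\ell_\infty$ regime is pseudorandom and treated by concentration, while the large-$\ell_\infty$ regime carries structural information exploited by conditioning on a distinguished coordinate.

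Intersecting the two events yields a $\mu_p$-set of measure $\meg p-\eta$ on which
\[ \|\langle\boldsymbol{\theta}_H,\boldsymbol{X}\rangle\|_{\psi_2}\mik O\!\left(\frac{K+1}{p}\right)\mik O\!\left(\frac{L(K+1)}{p}\right)\|\boldsymbol{\theta}_H\|_2, \]
which, after tracking absolute constants, gives \eqref{e4.2} with $C$ of the form \eqref{e4.1}. The principal obstacle lies in Step 1. Because subgaussianity of $\boldsymbol{X}$ at the single direction $\boldsymbol{\theta}$ does \emph{not} propagate to perturbed directions such as those with coefficients $(\mathbf{1}_{i\in H}-p)\theta_i$, a direct triangle-inequality bound on $\|R_H\|_{\psi_2}$ (in $\omega$) is unavailable. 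The resolution is to integrate first in the $H$-variable, exploiting the independence of the Bernoulli selectors from $\boldsymbol{X}$, and only afterwards transport the resulting joint moment estimate back to an $\omega$-tail bound valid for most $H$, via Fubini and the Orlicz-norm characterisation of Proposition \ref{p2.1}.
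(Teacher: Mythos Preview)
Your proposal is correct and follows essentially the same approach as the paper: the numerator is controlled exactly as in Lemma~\ref{l4.6} (Hoeffding in $H$ for fixed $\omega$, then Fubini and Markov), and the denominator via the $\ell_\infty$ dichotomy of Proposition~\ref{p4.8}. Two small points you gloss over: a single application of Fubini/Markov yields the tail estimate $\mathbb{P}(|\langle\boldsymbol{\theta}_H,\boldsymbol{X}\rangle|\meg t)\mik 2e^{-ct^2}$ only at one fixed level $t$, so before invoking Proposition~\ref{p2.1}(b) you must union-bound over a dyadic sequence $t_j=2^jM$ (this is Sublemma~\ref{s4.7} and Step~2 of Lemma~\ref{l4.6}); and your stated numerator bound $O((K+1)/p)$ should read $O\big((K+1)\sqrt{\log(1/\eta)}\big)$---the factor $1/p$ enters only through the denominator in the large-$\ell_\infty$ case---though the final order of $C$ in \eqref{e4.1} is unaffected.
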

It is clear that Theorem \ref{t4.1} yields part (1) of Theorem \ref{t1.1}. As we have already pointed out in the introduction,
the lower bound in \eqref{e4.2} is optimal.
\begin{examp} \label{ex4.2}
Let $n$ be an arbitrary positive integer, and set
\begin{equation} \label{e4.3}
\boldsymbol{\theta}\coloneqq (n, \underbrace{1,\dots,1}_{n\mathrm{-times}})\in \rr^{n+1}.
\end{equation}
We fix a $[-1,1]$-valued random variable $Z$ and, as in Subsection \ref{subsec1.3}, we define the (high cancellation)
random vector $\boldsymbol{X}=(X_1,\dots,X_{n+1})$ in $\rr^{n+1}$ by setting $X_1=-Z$, and $X_i=Z$
if $i\in \{2,\dots,n+1\}$. Since $\langle\boldsymbol{\theta},\boldsymbol{X}\rangle=0$, the random vector
$\boldsymbol{X}$ is $K\text{-subgaussian}$ at the direction $\boldsymbol{\theta}$ for any $K>0$. Next,
let $0<p<1$ be arbitrary, and set
\[ \mathcal{H}\coloneqq \{H\subseteq [n+1]:1\notin H \text{ and } |H\cap\{2,\dots,n+1\}|\meg pn/2\}. \]
By~Proposition \ref{p2.2}, we see that $\mu_p(\mathcal{H})= 1-p-o_{n\to\infty;p}(1)$.
Moreover, if $H\in\mathcal{H}$, then $\langle\boldsymbol{\theta}_H,\boldsymbol{X}\rangle=|H|\, Z$ and, therefore,
if $K$ is any positive real such that $\boldsymbol{X}$ is $K\text{-subgaussian}$ at the direction $\boldsymbol{\theta}_H$,
then $K\meg (\sqrt{p/2} \,\, \|Z\|_{\psi_2})\, n^{1/2}$. Thus, we conclude that for any $C>0$,
\begin{equation} \label{e4.4}
\mu_p\big( \{ H: \boldsymbol{X} \text{ is $C$-subgaussian at the direction } \boldsymbol{\theta}_H\}\big)\mik p+o_{n\to\infty;p,C}(1).
\end{equation}
\end{examp}

\subsection{A large deviation inequality for the $\psi_2$-norm} \label{subsec4.2}

The first step of the proof of Theorem \ref{t4.1} is the following large deviation inequality.
\begin{prop} \label{p4.3}
Let $K\meg 1/\sqrt{2}$, and let $0<p<1$. Also let $n, \boldsymbol{X},\boldsymbol{\theta}$ be as in Theorem \emph{\ref{t4.1}}.
If $\boldsymbol{X}$ is $K\text{-subgaussian}$ at the direction $\boldsymbol{\theta}$, then for any $\lambda\meg 8\sqrt{2}$ we~have
\begin{equation} \label{e4.5}
\mu_p\big( \big\{ H: \|\langle\boldsymbol{\theta}_H,\boldsymbol{X}\rangle\|_{\psi_2} \meg
\lambda K\|\boldsymbol{\theta}\|_2 \big\}\big) \mik 3\exp\Big(-\frac{\ln 2}{32}\lambda^2\Big).
\end{equation}
\end{prop}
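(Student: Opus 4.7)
The plan is to bound $\mu_p(\{H : \|\langle\boldsymbol{\theta}_H,\boldsymbol{X}\rangle\|_{\psi_2} \meg \lambda K \|\boldsymbol{\theta}\|_2\})$ by a tunable $k$-th power Markov argument fusing two complementary subgaussian inputs: the hypothesised subgaussianity of $Y := \langle\boldsymbol{\theta},\boldsymbol{X}\rangle$ in the variable $\boldsymbol{X}$, and the Hoeffding concentration of $Y_H := \langle\boldsymbol{\theta}_H,\boldsymbol{X}\rangle$ around $pY$ with respect to $H\sim\mu_p$, valid for every fixed realization of $\boldsymbol{X}$. The underlying identity $\ave_{H\sim\mu_p}[Y_H \mid \boldsymbol{X}] = pY$ (contained in the proof of Fact~\ref{f3.3}) is what links the two.

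For the Bernoulli ingredient, I apply Hoeffding's inequality (Proposition~\ref{p2.2}) with coefficients $c_i := \theta_i X_i$; the assumption $|X_i| \mik 1$ gives $\|\boldsymbol{c}\|_2 \mik \|\boldsymbol{\theta}\|_2$, and since $\sum_{i\in H} c_i - p\sum_i c_i = Y_H - pY$, this yields, for every fixed $\boldsymbol{X}$,
\[
\mu_p\big(\{H : |Y_H - pY| \meg t\} \,\big|\, \boldsymbol{X}\big) \mik 2\exp\bigl(-2t^2/\|\boldsymbol{\theta}\|_2^2\bigr).
\]
Proposition~\ref{p2.1}(b) upgrades this tail bound to the conditional exponential moment estimate
\[
\underset{H\sim\mu_p}{\ave}\bigl[\exp\bigl(2(Y_H-pY)^2/(3\|\boldsymbol{\theta}\|_2^2)\bigr) \,\big|\, \boldsymbol{X}\bigr] \mik 2.
\]

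Setting $s := \lambda K \|\boldsymbol{\theta}\|_2$, the definition of the Orlicz norm gives $\{\|Y_H\|_{\psi_2}\meg s\} = \{\ave_{\boldsymbol{X}}[e^{Y_H^2/s^2}] \meg 2\}$, so for any real $k\meg 1$, the $k$-th power Markov bound, combined with Jensen's inequality (used to push the $k$-th power inside $\ave_{\boldsymbol{X}}$), yields
\[
\mu_p\big(\|Y_H\|_{\psi_2}\meg s\big) \mik 2^{-k}\,\underset{\boldsymbol{X}}{\ave}\,\underset{H\sim\mu_p}{\ave}\bigl[e^{k Y_H^2/s^2}\bigr].
\]
The decomposition $Y_H = (Y_H - pY) + pY$ together with $(a+b)^2 \mik 2a^2 + 2b^2$ then factorises
\[
\underset{H}{\ave}\bigl[e^{kY_H^2/s^2}\bigr] \mik e^{2kp^2Y^2/s^2}\;\underset{H}{\ave}\bigl[e^{2k(Y_H-pY)^2/s^2}\bigr].
\]
Choosing $k$ so that both $s^2 \meg 3k\|\boldsymbol{\theta}\|_2^2$ (which keeps the $H$-factor $\mik 2$ by the previous display) and $s^2 \meg 2kp^2K^2\|\boldsymbol{\theta}\|_2^2$ (which, after $\boldsymbol{X}$-averaging, keeps the Gaussian-like factor $\mik 2$ by the hypothesis $\|Y\|_{\psi_2}\mik K\|\boldsymbol{\theta}\|_2$) makes the double expectation at most $4$. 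Under $K\meg 1/\sqrt 2$ and $0<p<1$ both constraints collapse to $k \mik \lambda^2/6$; taking $k = \lambda^2/6$ yields $\mu_p(\|Y_H\|_{\psi_2} \meg s) \mik 4\cdot 2^{-\lambda^2/6}$, which for $\lambda \meg 8\sqrt 2$ (where checking $\lambda^2(\tfrac{1}{6}-\tfrac{1}{32}) \meg \log_2(4/3)$ is trivial) is dominated by the target $3\exp(-(\ln 2/32)\lambda^2)$.

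The main conceptual step is recognising the split $Y_H = (Y_H - pY) + pY$ and then marrying the two pieces through the free parameter $k$: the first summand is $H$-subgaussian uniformly in $\boldsymbol{X}$ but carries no hypothesis-level information, while $pY$ is $\boldsymbol{X}$-subgaussian but entirely $H$-free, so neither estimate alone controls the doubly-exponential quantity $\|Y_H\|_{\psi_2}$. Scaling $k$ with $\lambda^2$, and separately tuning the two constraints against the Hoeffding exponent and against the hypothesis, is what converts these orthogonal pieces of information into the claimed joint $2^{-c\lambda^2}$ tail; this is where the technical care lies.
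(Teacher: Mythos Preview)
Your proof is correct and takes a genuinely different route from the paper's. Both arguments rest on the same decomposition $Y_H=(Y_H-pY)+pY$ and feed in the same two ingredients---Hoeffding for the first piece (conditionally on $\boldsymbol{X}$), the hypothesis for the second---but the packaging differs. The paper works at the level of tails: it first bounds the joint probability $(\mu_p\times\mathbb{P})\big(|Y_H|\meg t\big)$, then Markov-converts this into ``for most $H$ the tail $\mathbb{P}(|Y_H|\meg t)$ is small'', and finally discretizes over the dyadic grid $t=2^jM$ and invokes a separate sublemma (Sublemma~\ref{s4.7}) to turn the family of tail bounds into a $\psi_2$-norm bound. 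You instead attack the defining exponential moment of the $\psi_2$-norm directly, using the $k$-th power Markov/Jensen trick with the free parameter $k$ tuned against both constraints; this bypasses the discretization and the auxiliary sublemma altogether. As a by-product your argument actually produces the sharper exponent $2^{-\lambda^2/6}$ before you discard it to match the stated bound $3\exp(-(\ln 2/32)\lambda^2)$. One tiny quibble: the set identity $\{\|Y_H\|_{\psi_2}\meg s\}=\{\ave_{\boldsymbol{X}}[e^{Y_H^2/s^2}]\meg 2\}$ is really only the inclusion $\subseteq$, but that is the direction you need for Markov, so nothing is lost.
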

In order to put Proposition \ref{p4.3} in a proper context recall that, by \eqref{e3.3} and the triangle
inequality, we have $p\, \|\langle \boldsymbol{\theta},\boldsymbol{X}\rangle \|_{\psi_2} \mik
\underset{H\sim \mu_p}{\ave} \|\langle \boldsymbol{\theta}_H,\boldsymbol{X}\rangle \|_{\psi_2}$.
The next corollary shows that this estimate can actually be \textit{reversed}. Thus, we may
view Proposition~\ref{p4.3} as a reverse triangle inequality.
\begin{cor} \label{c4.4}
Let $K\meg 1/\sqrt{2}$, and let $0<p<1$. Also let $n, \boldsymbol{X},\boldsymbol{\theta}$ be as in Theorem \emph{\ref{t4.1}}.
If $\boldsymbol{X}$ is $K\text{-subgaussian}$ at the direction $\boldsymbol{\theta}$, then
\begin{equation} \label{e4.6}
\underset{H\sim \mu_p}{\ave} \|\langle \boldsymbol{\theta}_H,\boldsymbol{X}\rangle \|_{\psi_2}\mik
12 K\,\|\boldsymbol{\theta}\|_2.
\end{equation}
In particular, if\, $\|\langle \boldsymbol{\theta},\boldsymbol{X}\rangle \|_{\psi_2}\meg \|\boldsymbol{\theta}\|_2/\sqrt{2}$, then
\begin{equation} \label{e4.7}
\underset{H\sim \mu_p}{\ave} \|\langle \boldsymbol{\theta}_H,\boldsymbol{X}\rangle \|_{\psi_2}\mik
12\, \|\langle \boldsymbol{\theta},\boldsymbol{X}\rangle \|_{\psi_2}.
\end{equation}
\end{cor}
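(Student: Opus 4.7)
The plan is to derive \eqref{e4.6} from the pointwise tail bound of Proposition \ref{p4.3} via the standard layer-cake identity, and then recover \eqref{e4.7} as an immediate specialization.

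Setting $Y_H := \|\langle\boldsymbol{\theta}_H,\boldsymbol{X}\rangle\|_{\psi_2}$ and using
\[ \underset{H\sim\mu_p}{\ave} Y_H = \int_0^\infty \mu_p\big(\{H: Y_H\meg t\}\big)\,dt, \]
I would change variables $t = \lambda K\|\boldsymbol{\theta}\|_2$ and then split the resulting $\lambda$-integral at the threshold $\lambda_0 := 8\sqrt{2}$ at which Proposition \ref{p4.3} first applies. On $[0,\lambda_0]$ the integrand is trivially at most $1$, contributing $\lambda_0 = 8\sqrt{2}$. On $[\lambda_0,\infty)$ I would invoke \eqref{e4.5} directly, reducing matters to the Gaussian tail $3\int_{8\sqrt{2}}^\infty \exp(-\tfrac{\ln 2}{32}\lambda^2)\,d\lambda$. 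The elementary Mills-type estimate $\int_a^\infty e^{-cx^2}\,dx\mik \tfrac{1}{2ac}\,e^{-a^2c}$ (valid for $a,c>0$, obtained by comparing the integrand with $\tfrac{x}{a}\,e^{-cx^2}$), applied with $a=8\sqrt{2}$ and $c=\ln 2/32$, gives $a^2c=4\ln 2$ and $2ac=\sqrt{2}\ln 2/2$, so the tail contribution is bounded by $\tfrac{3}{8\sqrt{2}\,\ln 2}<0.39$, comfortably below the remaining budget $12-8\sqrt{2}\approx 0.69$. Summing the two contributions yields $\underset{H\sim\mu_p}{\ave}Y_H\mik 12K\|\boldsymbol{\theta}\|_2$, i.e.\ \eqref{e4.6}.

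For the ``in particular'' claim I would set $K := \|\langle\boldsymbol{\theta},\boldsymbol{X}\rangle\|_{\psi_2}/\|\boldsymbol{\theta}\|_2$; the hypothesis $\|\langle\boldsymbol{\theta},\boldsymbol{X}\rangle\|_{\psi_2}\meg \|\boldsymbol{\theta}\|_2/\sqrt{2}$ guarantees $K\meg 1/\sqrt{2}$, and by construction $\boldsymbol{X}$ is $K$-subgaussian at $\boldsymbol{\theta}$ (trivially, with equality in \eqref{e1.2}), so \eqref{e4.6} applies and produces \eqref{e4.7} at once. The only delicate point in the whole argument is the constant-chasing: the crude bound on $[0,8\sqrt{2}]$ already consumes $\approx 11.31$ of the $12$-budget, so the rapid decay rate $\exp(-(\ln 2/32)\lambda^2)$ furnished by Proposition \ref{p4.3} is genuinely needed to keep the Gaussian tail under the threshold. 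Beyond this numerical verification, I expect no structural obstacle.
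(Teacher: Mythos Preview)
Your proposal is correct and follows essentially the same approach as the paper: the layer-cake identity, the change of variables $t=\lambda K\|\boldsymbol{\theta}\|_2$, the split at $\lambda_0=8\sqrt{2}$, and the application of Proposition~\ref{p4.3} on the tail are exactly what the paper does, and your derivation of \eqref{e4.7} from \eqref{e4.6} by choosing $K=\|\langle\boldsymbol{\theta},\boldsymbol{X}\rangle\|_{\psi_2}/\|\boldsymbol{\theta}\|_2$ is the intended reading of the ``in particular'' clause. The only difference is cosmetic: you justify the numerical bound on the Gaussian tail via an explicit Mills-type inequality, whereas the paper simply asserts the final estimate $8\sqrt{2}+\int_{8\sqrt{2}}^\infty 3\exp(-\tfrac{\ln 2}{32}\lambda^2)\,d\lambda\mik 12$.
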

\begin{proof}
It is a straightforward consequence of Proposition \ref{p4.3}. Indeed,
\begin{eqnarray*}
\underset{H\sim \mu_p}{\ave} \|\langle \boldsymbol{\theta}_H,\boldsymbol{X}\rangle \|_{\psi_2}
& = & \int_{0}^{\infty} \mu_p\big(\big\{H: \|\langle \boldsymbol{\theta}_H,\boldsymbol{X}\rangle \|_{\psi_2}\meg t\big\}\big)\, dt \\
& = &  K\|\boldsymbol{\theta}\|_2 \int_{0}^{\infty} \mu_p\big(\big\{ H:
\|\langle \boldsymbol{\theta}_H,\boldsymbol{X}\rangle \|_{\psi_2}\meg \lambda K\|\boldsymbol{\theta}\|_2\big\}\big)\, d\lambda \\
& \mik & K\|\boldsymbol{\theta}\|_2 \Big(8\sqrt{2}+ \int_{8\sqrt{2}}^{\infty} 3 \exp\Big(-\frac{\ln 2}{32}\lambda^2\Big)\, d\lambda\Big)
\mik 12K\|\boldsymbol{\theta}\|_2
\end{eqnarray*}
as desired.
\end{proof}
Corollary \ref{c4.4} can be used, in turn, to upgrade Proposition \ref{p4.3} and provide finer information for the distribution of the
$\psi_2$-norm of the random variables  $\langle \boldsymbol{\theta}_H,\boldsymbol{X}\rangle$. Specifically, we have the following corollary;
it follows immediately by Lemma~\ref{l3.4}, Corollary  \ref{c4.4}, and taking into account the fact that $\|X\|_{\psi_2}\mik 1/\sqrt{\ln 2}$
for every $[-1,1]$-valued random variable $X$.
\begin{cor} \label{c4.5}
Let $K\meg 1/\sqrt{2}$, and let $0<p<1$. Also let $n, \boldsymbol{X},\boldsymbol{\theta}$ be as in Theorem \emph{\ref{t4.1}}.
If $\boldsymbol{X}$ is $K\text{-subgaussian}$ at the direction $\boldsymbol{\theta}$, then for any $\lambda> 0$
\begin{equation} \label{e4.8}
\mu_p\big(\big\{H: \|\langle \boldsymbol{\theta}_H,\boldsymbol{X}\rangle \|_{\psi_2} \meg (12+\lambda)K\|\boldsymbol{\theta}\|_2 \big\}\big)
\mik 2\exp(-2\ln 2\,\lambda^2K^2).
\end{equation}
\end{cor}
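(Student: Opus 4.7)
The plan is to combine two ingredients that are already in hand: the concentration provided by Lemma \ref{l3.4} and the mean bound supplied by Corollary \ref{c4.4}. Write $M \coloneqq \underset{H\sim \mu_p}{\ave} \|\langle \boldsymbol{\theta}_H, \boldsymbol{X}\rangle \|_{\psi_2}$.

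First, I would adapt Lemma \ref{l3.4} to the present hypothesis. That lemma is stated under the assumptions of Proposition \ref{p3.1}, where each $\|X_i\|_{\psi_2} \mik 1$; here the entries are only $[-1,1]$-valued, which gives the slightly weaker bound $\|X_i\|_{\psi_2} \mik 1/\sqrt{\ln 2}$ (coming from $\ave[e^{X_i^2}]\mik e$ whenever $|X_i|\mik 1$). Re-running the short proof of Lemma \ref{l3.4} with this estimate, the bounded-differences constants become $c_i = |\theta_i|/\sqrt{\ln 2}$, so $\|\boldsymbol{c}\|_2^2 = \|\boldsymbol{\theta}\|_2^2/\ln 2$. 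Applying Proposition \ref{p2.3} with $t = \lambda K \|\boldsymbol{\theta}\|_2$ then yields
\[ \mu_p\big(\big\{H : \big|\|\langle \boldsymbol{\theta}_H,\boldsymbol{X}\rangle\|_{\psi_2} - M\big| \meg \lambda K \|\boldsymbol{\theta}\|_2\big\}\big) \mik 2\exp(-2\ln 2 \cdot \lambda^2 K^2). \]

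Second, Corollary \ref{c4.4} supplies $M \mik 12 K \|\boldsymbol{\theta}\|_2$. Consequently, the event in \eqref{e4.8}, namely $\{\|\langle \boldsymbol{\theta}_H,\boldsymbol{X}\rangle\|_{\psi_2} \meg (12+\lambda) K \|\boldsymbol{\theta}\|_2\}$, is contained in the event $\{\|\langle \boldsymbol{\theta}_H,\boldsymbol{X}\rangle\|_{\psi_2} - M \meg \lambda K \|\boldsymbol{\theta}\|_2\}$, and \eqref{e4.8} follows by monotonicity of $\mu_p$. There is essentially no obstacle here, as both ingredients have already been established; the only point requiring care is tracking the factor $1/\sqrt{\ln 2}$ arising from the $[-1,1]$-boundedness of the $X_i$, which is precisely what produces the constant $2\ln 2$ in the exponent of \eqref{e4.8} in place of the bare $2$ that would appear under the hypothesis $\|X_i\|_{\psi_2}\mik 1$.
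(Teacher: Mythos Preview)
Your argument is correct and coincides with the paper's own proof: the paper states that Corollary~\ref{c4.5} ``follows immediately by Lemma~\ref{l3.4}, Corollary~\ref{c4.4}, and taking into account the fact that $\|X\|_{\psi_2}\mik 1/\sqrt{\ln 2}$ for every $[-1,1]$-valued random variable $X$,'' and you have simply written out these steps in full, including the correct bookkeeping of the factor $1/\sqrt{\ln 2}$ that yields the constant $2\ln 2$ in the exponent.
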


\subsection{Proof of Proposition \ref{p4.3}} \label{subsec4.3}

It is based on the following lemma.
\begin{lem} \label{l4.6}
Let $K>0$, and let $0<p<1$. Also let $n,\boldsymbol{X}, \boldsymbol{\theta}$ be as in Theorem~\emph{\ref{t4.1}}.
If $\boldsymbol{X}$ is $K\text{-subgaussian}$ at the direction $\boldsymbol{\theta}$, then, setting
$Q\coloneqq \max\{2pK,\sqrt{2}\}$, for every $M\meg \max\{4\sqrt{2\ln 2}\, pK , 4\sqrt{\ln 2}\}$ we have
\begin{equation} \label{e4.9}
\mu_p\Big( \Big\{ H: \| \langle \boldsymbol{\theta}_H, \boldsymbol{X} \rangle \|_{\psi_2} \mik
\sqrt{\frac{3}{\ln 2}}\,M\|\boldsymbol{\theta}\|_2 \Big\}\Big) \meg 1- 3 \exp\Big(-\frac{M^2}{2Q^2}\Big).
\end{equation}
\end{lem}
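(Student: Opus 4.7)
The plan is to deduce the $\psi_2$-bound on $\langle\boldsymbol{\theta}_H,\boldsymbol{X}\rangle$ (viewed, for fixed $H$, as a random variable in $\boldsymbol{X}$) from a uniform tail estimate: if $\mathbb{P}(|\langle\boldsymbol{\theta}_H,\boldsymbol{X}\rangle|\meg t)\mik 2\exp(-t^2\ln 2/(M\|\boldsymbol{\theta}\|_2)^2)$ for every $t>0$, then Proposition \ref{p2.1}(b) yields $\|\langle\boldsymbol{\theta}_H,\boldsymbol{X}\rangle\|_{\psi_2}\mik \sqrt{3/\ln 2}\,M\|\boldsymbol{\theta}\|_2$. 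So the task is to identify a $\mu_p$-large set of $H$'s on which this uniform bound holds. Writing $\theta\coloneqq\|\boldsymbol{\theta}\|_2$ and recalling $Q=\max\{2pK,\sqrt{2}\}$, the first aim is the joint tail estimate
\[ \mathbb{P}\otimes \mu_p\big(\big\{(\boldsymbol{X},H):|\langle\boldsymbol{\theta}_H,\boldsymbol{X}\rangle|\meg t\big\}\big)\mik 4\exp\big(-t^2/(Q\theta)^2\big)\qquad(t>0). \]

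For this I use the triangle-inequality inclusion $\{|\langle\boldsymbol{\theta}_H,\boldsymbol{X}\rangle|\meg t\}\subseteq \{|p\langle\boldsymbol{\theta},\boldsymbol{X}\rangle|\meg t/2\}\cup\{|\langle\boldsymbol{\theta}_H,\boldsymbol{X}\rangle-p\langle\boldsymbol{\theta},\boldsymbol{X}\rangle|\meg t/2\}$. The first event has $\boldsymbol{X}$-probability at most $2\exp(-t^2/(4p^2K^2\theta^2))$ by Proposition \ref{p2.1}(a) applied to the $K$-subgaussian variable $\langle\boldsymbol{\theta},\boldsymbol{X}\rangle$. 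For the second, I fix a realization $\omega$ of $\boldsymbol{X}$ and apply Proposition \ref{p2.2} to the vector $(\theta_iX_i(\omega))_{i=1}^n$, whose $\ell_2$-norm is $\mik \theta$ because $|X_i|\mik 1$; this bounds the conditional $\mu_p$-probability by $2\exp(-t^2/(2\theta^2))$. Integrating over $\omega$ and bounding the sum of the two exponentials by $4\exp(-t^2/(Q\theta)^2)$ yields the joint estimate.

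The second step is a Markov plus dyadic union bound. Set $t_k\coloneqq 2^{k/2}M\theta$ for $k\meg 0$. Markov's inequality in $H$ applied to $\mathbb{P}(|\langle\boldsymbol{\theta}_H,\boldsymbol{X}\rangle|\meg t_k)$ with threshold $2\exp(-2t_k^2\ln 2/(M\theta)^2)$ shows that the scale-$k$ exceptional $\mu_p$-measure is at most $2\exp\big(-(t_k^2/\theta^2)(1/Q^2-2\ln 2/M^2)\big)$. A direct check (in both cases of the maximum defining the threshold on $M$) shows that the hypothesis forces $M^2\meg 8Q^2\ln 2$, hence $1/Q^2-2\ln 2/M^2\meg 3/(4Q^2)$; plugging in $t_k^2=2^kM^2\theta^2$, the $k$th exceptional probability is $\mik 2\exp(-3\cdot 2^kM^2/(4Q^2))$. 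Since $3M^2/(4Q^2)\meg 6\ln 2$, the series $\sum_{k\meg 0}2\exp(-3\cdot 2^kM^2/(4Q^2))$ is dominated by a geometric series with ratio $\mik 2^{-6}$ and totals at most $3\exp(-3M^2/(4Q^2))\mik 3\exp(-M^2/(2Q^2))$.

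For every $H$ outside this exceptional set and every $k\meg 0$ the scale-$k$ tail bound holds. For $t\in[t_k,t_{k+1}]$, monotonicity of tails combined with $t^2\mik t_{k+1}^2=2t_k^2$ transfers this to $\mathbb{P}(|\langle\boldsymbol{\theta}_H,\boldsymbol{X}\rangle|\meg t)\mik 2\exp(-t^2\ln 2/(M\theta)^2)$; for $t\in(0,M\theta]$ the right-hand side is $\meg 1$ and the bound is trivial. Hence the uniform tail bound holds for such $H$, and Proposition \ref{p2.1}(b) concludes. The main technical burden is the constant bookkeeping: the dyadic ratio $\sqrt{2}$ must be chosen so that the factor-of-two loss during interpolation is absorbed inside the $\ln 2$ factor, while the lower bound on $M$ is precisely calibrated so that, simultaneously, $M^2\meg 8Q^2\ln 2$, the summed exceptional measure is $\mik 3\exp(-M^2/(2Q^2))$, and the final $\psi_2$-constant equals $\sqrt{3/\ln 2}$.
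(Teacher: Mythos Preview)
Your proof is correct and follows essentially the same strategy as the paper: establish the joint tail bound $\mathbb{P}\otimes\mu_p(|\langle\boldsymbol{\theta}_H,\boldsymbol{X}\rangle|\meg t)\mik 4\exp(-t^2/(Q\|\boldsymbol{\theta}\|_2)^2)$ via Hoeffding plus the subgaussian tail of $\langle\boldsymbol{\theta},\boldsymbol{X}\rangle$, then apply Markov in $H$ along a dyadic grid and interpolate. The only differences are cosmetic: the paper uses dyadic ratio $2$ and Markov threshold $2\exp(-t^2/(2Q^2))$, packaging the interpolation as a separate Sublemma~\ref{s4.7}, whereas you use ratio $\sqrt{2}$, a threshold $2\exp(-2t^2\ln 2/(M\|\boldsymbol{\theta}\|_2)^2)$ tailored to land directly on the target decay rate, and perform the interpolation inline.
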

It is easy to see that Proposition \ref{p4.3} follows from Lemma \ref{l4.6}. Indeed, let $\lambda\meg 8\sqrt{2}$
be arbitrary, and set $M\coloneqq (\sqrt{\ln 2}/2)\,\lambda K$. It is easy to see that with this choice we have that
$M\meg \max\{4\sqrt{2\ln 2}\,pK , 4\sqrt{\ln 2}\}$. Noticing that $2K\meg\max\{2pK,\sqrt{2}\}$,
by Lemma \ref{l4.6}, we conclude that \eqref{e4.5} is satisfied.

Thus, it is enough to prove Lemma \ref{l4.6}. To this end, we need the following sublemma.
\begin{sbl} \label{s4.7}
Let $X$ be a real-valued random variable, let $R, C>0$, and assume that $\mathbb{P}(\{|X|\meg 2^j R\})\mik 2 \exp(-(2^j R)^2 /C^2)$
for every $j\in\nn$. Then we have
\[ \|X\|_{\psi_2}\mik\sqrt{3}\max\{2C,R/\sqrt{\ln 2}\}. \]
\end{sbl}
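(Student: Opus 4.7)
The plan is to reduce the sublemma to Proposition \ref{p2.1}(b) by extending the discrete tail bound at the points $2^jR$ ($j\in\nn$) to a continuous subgaussian tail bound on all of $(0,\infty)$. Setting $K\coloneqq \max\{2C,R/\sqrt{\ln 2}\}$, the goal is to prove that $\pp(\{|X|\meg t\})\mik 2\exp(-t^2/K^2)$ for every $t>0$, after which Proposition \ref{p2.1}(b) immediately yields $\|X\|_{\psi_2}\mik \sqrt{3}\,K$, which is exactly the desired conclusion.

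I would split the verification of the tail bound into two regimes. First, for $t\meg R$, I would pick the unique $j\in\nn$ with $2^jR\mik t<2^{j+1}R$; monotonicity of the tail distribution gives
\[ \pp(\{|X|\meg t\}) \mik \pp(\{|X|\meg 2^jR\}) \mik 2\exp\!\big(-(2^jR)^2/C^2\big), \]
and since $2^jR>t/2$ we obtain $(2^jR)^2/C^2> t^2/(4C^2)\meg t^2/K^2$, hence $\pp(\{|X|\meg t\})\mik 2\exp(-t^2/K^2)$. Second, for $0<t<R$, I would use only the trivial bound $\pp(\{|X|\meg t\})\mik 1$ together with the inequality $K\sqrt{\ln 2}\meg R> t$, which forces $t^2/K^2<\ln 2$ and therefore $2\exp(-t^2/K^2)>1$; the bound then holds trivially.

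This handles both regimes and establishes the desired uniform subgaussian tail, so Proposition \ref{p2.1}(b) concludes the proof. No step is particularly delicate: the only point that requires a bit of care is the choice of the constant $K$, which is dictated by the two constraints---$K\meg 2C$ to absorb the doubling loss when $t\meg R$, and $K\meg R/\sqrt{\ln 2}$ to make the bound vacuous when $t<R$. Taking the maximum of these two thresholds and inserting the factor $\sqrt{3}$ coming from Proposition \ref{p2.1}(b) gives precisely $\sqrt{3}\max\{2C,R/\sqrt{\ln 2}\}$.
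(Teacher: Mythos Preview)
Your proof is correct and follows essentially the same approach as the paper's own proof: set $N=\max\{2C,R/\sqrt{\ln 2}\}$, verify the subgaussian tail bound $\pp(\{|X|\meg t\})\mik 2\exp(-t^2/N^2)$ for all $t>0$ by treating the regime $0<t<R$ (where the bound is vacuous) and the regime $t\meg R$ (via dyadic pigeonholing and the loss of a factor $2$ in~$t$), and then invoke Proposition~\ref{p2.1}(b). The only differences are notational.
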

\begin{proof}
Set $N\coloneqq\max\{2C,R/\sqrt{\ln 2}\}$. By Proposition \ref{p2.1}, it suffices to show that for every $t>0$ we have
$\mathbb{P}(\{|X|\meg t\})\mik 2\exp(-t^2/N^2)$.

Indeed, notice first that, since $N\meg R/\sqrt{\ln 2}$, we have $2\exp(-R^2/N^2)\meg1$. This, in turn,
implies that $\mathbb{P}(\{|X|\meg t\})\mik 2\exp(-t^2/N^2)$ if $0<t\mik R$.

The remaining cases (that is, when $t\meg R$) follow from our hypothesis and a standard dyadic pigeonholing.
Specifically, for every $j\in\nn$ set $t_j\coloneqq 2^j R$ and observe that
\begin{equation} \label{e4.10}
\mathbb{P}(\{|X| \meg t_j\})\mik 2 \exp(-t_j^2/C^2).
\end{equation}
Let $t\meg R$ be arbitrary and let $j_0\in\nn$ be such that $t_{j_0} \mik t <t_{j_0+1}=2t_{j_0}$. Then we have
\[ \mathbb{P}(\{|X|\meg t\}) \mik \mathbb{P}(\{|X|\meg t_{j_0}\}) \mik 2 \exp(-t_{j_0}^2/C^2)
\mik  2 \exp(-t^2/(2C)^2) \mik 2 \exp(-t^2/N^2) \]
and the proof is completed.
\end{proof}
We are ready to proceed to the proof of Lemma \ref{l4.6}.
\begin{proof}[Proof of Lemma \emph{\ref{l4.6}}]
The left-hand side of \eqref{e4.9} is scale-invariant; thus we may assume that $\|\boldsymbol{\theta}\|_2=1$, and it is enough to prove that
\begin{equation} \label{e4.11}
\mu_p\Big( \Big\{ H: \| \langle \boldsymbol{\theta}_H, \boldsymbol{X} \rangle \|_{\psi_2} \mik
\sqrt{\frac{3}{\ln 2}}\,M \Big\}\Big) \meg 1- 3 \exp\Big(-\frac{M^2}{2Q^2}\Big)
\end{equation}
for every $M\meg \max\{4\sqrt{2\ln 2}\, pK , 4\sqrt{\ln 2}\}$.
\medskip

\noindent \textit{Step 1.} We will show that for every $t>0$ we have
\begin{equation} \label{e4.12}
\mu_p\Big(\Big\{ H: \mathbb{P}\big(\big\{|\langle \boldsymbol{\theta}_H,\boldsymbol{X} \rangle|\meg t\big\}\big) \mik
2 \exp\Big(-\frac{t^2}{2Q^2}\Big)\Big\}\Big) \meg 1- 2 \exp\Big(-\frac{t^2}{2Q^2}\Big).
\end{equation}
Fix $t>0$. Let $(\Omega,\mathcal{F},\mathbb{P})$ denote the underlying probability space. Let $\omega\in\Omega$ be arbitrary;
since $\boldsymbol{X}(\omega)\in [-1,1]^n$ and $\|\boldsymbol{\theta}\|_2=1$, by Proposition \ref{p2.2}, we have
\begin{equation} \label{e4.13}
\mu_p \Big(\Big\{ H : \big| \langle \boldsymbol{\theta}_H,\boldsymbol{X}(\omega)\rangle -
p\, \langle \boldsymbol{\theta}, \boldsymbol{X}(\omega)\rangle \big|< \frac{t}{2} \Big\}\Big)
\meg 1 - 2 \exp\Big(-\frac{t^2}{2}\Big).
\end{equation}
(We note that here is the only place in the argument where the boundedness of the random vector $\boldsymbol{X}$ is used.)
Next, observe that the event
\begin{equation} \label{e4.14}
\big\{ (H,\omega): |\langle \boldsymbol{\theta}_H,\boldsymbol{X}(\omega)\rangle| <t \big\}
\end{equation}
contains the event
\begin{equation} \label{e4.15}
\Big\{ (H,\omega): |\langle \boldsymbol{\theta},\boldsymbol{X}(\omega)\rangle|<\frac{t}{2p} \text{ and }
\big|\langle \boldsymbol{\theta}_H, \boldsymbol{X}(\omega)\rangle -
p\,\langle \boldsymbol{\theta}, \boldsymbol{X}(\omega)\rangle\big|< \frac{t}{2}\Big\}.
\end{equation}
Finally, notice that $\|\langle \boldsymbol{\theta},\boldsymbol{X} \rangle\|_{\psi_2}\mik K$ since $\|\boldsymbol{\theta}\|_2=1$
and $\boldsymbol{X}$ is $K$-subgaussian at the direction $\boldsymbol{\theta}$. Thus, by Proposition \ref{p2.1} applied to the fixed $t$, we have
\begin{equation} \label{e4.16}
\mathbb{P}\Big( \Big\{ |\langle \boldsymbol{\theta},\boldsymbol{X}\rangle|<\frac{t}{2p} \Big\}\Big)\meg
1-2\exp\Big(-\frac{t^2}{(2pK)^2}\Big).
\end{equation}
Let $\mu_p\times\mathbb{P}$ denote the product probability measure of $\mu_p$ and $\mathbb{P}$. Then using: (i) the estimates
in \eqref{e4.13} and \eqref{e4.16}, (ii) the inclusion of the events in \eqref{e4.14} and \eqref{e4.15}, (iii) the choice of the
constant $Q$, and (iv) Fubini's theorem, we obtain that
\begin{equation} \label{e4.17}
\mu_p\times\mathbb{P} \big(\big\{ (H,\omega) : |\langle \boldsymbol{\theta}_H,\boldsymbol{X}(\omega)\rangle|<t\big\}\big)
\meg 1 - 4\exp\Big(-\frac{t^2}{Q^2}\Big)
\end{equation}
or, equivalently,
\begin{equation} \label{e4.18}
\mu_p\times\mathbb{P} \big(\big\{ (H,\omega) : |\langle \boldsymbol{\theta}_H,\boldsymbol{X}(\omega)\rangle| \meg t\big\}\big)
\mik 4\exp\Big(-\frac{t^2}{Q^2}\Big).
\end{equation}
By \eqref{e4.18} and Markov's inequality, we conclude that
\begin{equation} \label{e4.19}
\mu_p\Big(\Big\{ H: \mathbb{P}\big(\big\{|\langle \boldsymbol{\theta}_H,\boldsymbol{X} \rangle|\meg t\big\}\big) >
2 \exp\Big(-\frac{t^2}{2Q^2}\Big)\Big\}\Big) \mik 2 \exp\Big(-\frac{t^2}{2Q^2}\Big)
\end{equation}
which is clearly equivalent to \eqref{e4.12}.
\medskip

\noindent \textit{Step 2.} We will estimate the probability in \eqref{e4.11} using a discretization argument,  \eqref{e4.12}
and Sublemma \ref{s4.7}. We proceed to the details.

Let $M\meg \max\{4\sqrt{2\ln 2}\,pK , 4\sqrt{\ln 2}\}$ be arbitrary. For every $j\in\nn$ set
\begin{equation} \label{e4.20}
\mathcal{C}_M^j\coloneqq \Big\{H: \mathbb{P}\big(\big\{ |\langle\boldsymbol{\theta}_H,\boldsymbol{X}\rangle|\meg 2^j M\big\}\big)
\mik 2\exp\Big(-\frac{2^{2j}M^2}{2Q^2}\Big)\Big\}
\end{equation}
and observe that, by \eqref{e4.12}, we have  $\mu_p(\mathcal{C}_M^j)\meg 1- 2\exp\Big(-\frac{2^{2j}M^2}{2Q^2}\Big)$.
Therefore, setting
\begin{equation} \label{e4.21}
\mathcal{C}_M \coloneqq \bigcap_{j\in\nn} \mathcal{C}_M^j,
\end{equation}
we have
\begin{eqnarray} \label{e4.22}
\mu_p(\mathcal{C}_M) & \meg & 1-2\sum_{j=0}^{\infty} \exp\Big(-\frac{2^{2j}M^2}{2Q^2}\Big) \\
& \meg & 1- 2\exp\Big(-\frac{M^2}{2Q^2}\Big) - 2\sum_{j=1}^{\infty} \exp\Big(-\frac{2j M^2}{Q^2}\Big) \nonumber \\
& = & 1- 2 \exp\Big(-\frac{M^2}{2Q^2}\Big) - 2\frac{\exp(-2M^2/Q^2)}{1-\exp(-2M^2/Q^2)} \nonumber \\
& \meg & 1- 3 \exp(-M^2/2Q^2) \nonumber
\end{eqnarray}
where the last inequality holds true since $M\meg 2\sqrt{2\ln 2}\, Q \meg \sqrt{2\ln 2}\, Q$. Moreover,
for every $H\in \mathcal{C}_M$, by Sublemma \ref{s4.7} applied for ``$X = \langle\boldsymbol{\theta}_H,\boldsymbol{X}\rangle$"\!,
``$R=M$" and ``$C=\sqrt{2}\,Q$" and using again the fact that $M\meg 2\sqrt{2\ln 2}\, Q$, we see that
$\|\langle\boldsymbol{\theta}_H,\boldsymbol{X}\rangle\|_{\psi_2}\mik \sqrt{3/\ln 2}\,M$. This shows that \eqref{e4.11} is satisfied,
and the proof of Lemma \ref{l4.6} is completed.
\end{proof}

\subsection{The main dichotomy}  \label{subsec4.4}

The next, and last, step of the proof of Theorem \ref{t4.1} is the following proposition which relates
the probability on the left-hand side of~\eqref{e4.2} with the $\ell_\infty$-norm of the direction $\boldsymbol{\theta}$.
In particular, this probability gets bigger as $\|\boldsymbol{\theta}\|_\infty$ gets smaller.
\begin{prop} \label{p4.8}
Let $K\meg 1/\sqrt{2}$, and let\, $0<p<1$. Also let $n, \boldsymbol{X},\boldsymbol{\theta}$ be as in Theorem~\emph{\ref{t4.1}}.
Assume that $\|\boldsymbol{\theta}\|_2=1$ and that $\boldsymbol{X}$ is $K\text{-subgaussian}$ at the direction~$\boldsymbol{\theta}$.
Finally, let $0<\alpha\mik 1$. Then, for every $\lambda>0$, the following hold.
\begin{enumerate}
\item [(i)] If\, $\|\boldsymbol{\theta}\|_\infty\mik \alpha$, then
\begin{equation} \label{e4.23}
\begin{split}
\mu_p \Big(\big\{ H: \boldsymbol{X} & \text{ is $\big(\sqrt{2/p}\, (12+\lambda)K\big)$-subgaussian at the direction } \boldsymbol{\theta}_H\big\}\Big) \\
& \meg 1- 2\exp(-2\ln 2\, \lambda^2K^2) -  2\exp\Big(-\frac{p^2}{2\alpha^2}\Big).
\end{split}
\end{equation}
\item [(ii)] If\, $\|\boldsymbol{\theta}\|_\infty\meg \alpha$, then
\begin{equation} \label{e4.24}
\begin{split}
\mu_p \Big(\big\{ H: \boldsymbol{X} & \text{ is $\big( (12+\lambda)K\alpha^{-1}\big)$-subgaussian at the direction } \boldsymbol{\theta}_H\big\}\Big) \\
& \meg p - 2\exp(-2\ln 2\, \lambda^2K^2).
\end{split}
\end{equation}
\end{enumerate}
\end{prop}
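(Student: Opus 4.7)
The starting point is Corollary \ref{c4.5}, which already gives the desired control on the numerator $\|\langle \boldsymbol{\theta}_H, \boldsymbol{X}\rangle\|_{\psi_2}$: since $\|\boldsymbol{\theta}\|_2=1$, for every $\lambda>0$ we have
\[ \mu_p\big(\{H: \|\langle \boldsymbol{\theta}_H,\boldsymbol{X}\rangle\|_{\psi_2} \geqslant (12+\lambda)K\}\big) \leqslant 2\exp(-2\ln 2 \, \lambda^2 K^2). \]
Hence, to obtain subgaussianity at the direction $\boldsymbol{\theta}_H$ with constant $C$, it suffices to guarantee, on an event of suitable $\mu_p$-measure, that $\|\boldsymbol{\theta}_H\|_2 \geqslant (12+\lambda)K/C$. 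The whole proof reduces to producing such a lower bound, and this is where the dichotomy on $\|\boldsymbol{\theta}\|_\infty$ enters.

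For part (i), I would apply Proposition \ref{p2.2} to the vector $\boldsymbol{c}=(\theta_1^2,\dots,\theta_n^2)$, noting that $\sum_{i=1}^n \theta_i^2 = 1$ and that $\|\boldsymbol{c}\|_2^2 = \sum_i \theta_i^4 \leqslant \|\boldsymbol{\theta}\|_\infty^2\, \|\boldsymbol{\theta}\|_2^2 \leqslant \alpha^2$. Taking $t=p/2$, we get
\[ \mu_p\big(\{H: \|\boldsymbol{\theta}_H\|_2^2 < p/2\}\big) \leqslant 2\exp\big(-p^2/(2\alpha^2)\big). \]
Intersecting with the event from Corollary \ref{c4.5} and bounding $(12+\lambda)K \leqslant \sqrt{2/p}\,(12+\lambda)K \cdot \|\boldsymbol{\theta}_H\|_2$ on this intersection yields \eqref{e4.23} after a union bound.

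For part (ii), I would fix a coordinate $i_0\in[n]$ with $|\theta_{i_0}|\geqslant \alpha$ and restrict attention to the event $\{i_0\in H\}$, which has $\mu_p$-measure exactly $p$. On this event, $\|\boldsymbol{\theta}_H\|_2 \geqslant |\theta_{i_0}| \geqslant \alpha$ deterministically, so intersecting with the good event from Corollary \ref{c4.5} gives
\[ \|\langle \boldsymbol{\theta}_H,\boldsymbol{X}\rangle\|_{\psi_2} \leqslant (12+\lambda)K \leqslant (12+\lambda)K\alpha^{-1}\|\boldsymbol{\theta}_H\|_2, \]
and the measure of this intersection is at least $p - 2\exp(-2\ln 2\,\lambda^2 K^2)$ by a union bound, giving \eqref{e4.24}.

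No step appears to be a serious obstacle; the one point that requires some care is the bound $\|(\theta_i^2)\|_2^2 \leqslant \|\boldsymbol{\theta}\|_\infty^2$ in part (i), which is what allows the Hoeffding deviation estimate for $\|\boldsymbol{\theta}_H\|_2^2$ around its mean $p$ to be nontrivial precisely in the small-$\ell_\infty$ regime and thereby justifies the dichotomy.
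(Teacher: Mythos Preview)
Your proposal is correct and follows the paper's proof essentially step for step: the paper also starts from Corollary~\ref{c4.5} to define a good event $\mathcal{H}_1$, then for part~(i) applies Proposition~\ref{p2.2} to the vector $(\theta_1^2,\dots,\theta_n^2)$ with $t=p/2$ (using the same bound $\|\boldsymbol{\theta}\|_4^4\mik \|\boldsymbol{\theta}\|_\infty^2\mik\alpha^2$, though written slightly differently), and for part~(ii) conditions on $\{i_0\in H\}$ for a coordinate with $|\theta_{i_0}|\meg\alpha$.
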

\begin{rem} \label{r4.9}
Note that the lower bound in \eqref{e4.23} depends upon the choice of $\alpha$ (thus, it is not uniform) but this is offset
by making the subgaussianity constant of $\boldsymbol{X}$ at the direction $\boldsymbol{\theta}_H$ independent of $\alpha$.
In \eqref{e4.24}, this phenomenon is reversed.
\end{rem}
\begin{rem} \label{r4.10}
The dependence on $p$\, in \eqref{e4.23} is tight up to a logarithmic factor. This can be seen by considering the diagonal
direction of a random vector $\boldsymbol{X}$ whose entries are truncated independent exponential random variables.
We are grateful to the referee for pointing this out.
\end{rem}
\begin{proof}[Proof of Proposition \emph{\ref{p4.8}}]
Fix $\lambda >0$, and set
\begin{equation} \label{e4.25}
\mathcal{H}_1\coloneqq \big\{ H: \|\langle\boldsymbol{\theta}_H, \boldsymbol{X}\rangle\|_{\psi_2} \mik (12+\lambda)K \big\}.
\end{equation}
Since $\|\boldsymbol{\theta}\|_2=1$ and $\boldsymbol{X}$ is $K\text{-subgaussian}$ at the direction $\boldsymbol{\theta}$,
by Corollary \ref{c4.5}, we~have
\begin{equation} \label{e4.26}
\mu_p(\mathcal{H}_1)\meg 1 - 2\exp(-2\ln 2\, \lambda^2K^2).
\end{equation}
Also write $\boldsymbol{\theta}=(\theta_1,\dots,\theta_n)$.
\medskip

\noindent \textit{Part} (i): Assume that $\|\boldsymbol{\theta}\|_\infty \mik \alpha$, and set
\begin{equation} \label{e4.27}
\mathcal{H}_2\coloneqq \big\{H: \|\boldsymbol{\theta}_H\|_2> \sqrt{p/2}\big\}.
\end{equation}
Notice that for every $H\in\mathcal{H}_1\cap\mathcal{H}_2$ we have
$\|\langle\boldsymbol{\theta}_H,\boldsymbol{X}\rangle\|_{\psi_2} \mik \sqrt{2/p}\, (12+\lambda)K\,\|\boldsymbol{\theta}_H\|_2$, that is,
the random vector $\boldsymbol{X}$ is $\big(\sqrt{2/p}\, (12+\lambda)K\big)$-subgaussian at the direction~$\boldsymbol{\theta}_H$.
Also observe that
\begin{equation} \label{e4.28}
\frac{1}{\|\boldsymbol{\theta}\|_4^4} = \frac{\|\boldsymbol{\theta}\|_2^2}{\|\boldsymbol{\theta}\|_4^4} =
\frac{\|\boldsymbol{\theta}/\|\boldsymbol{\theta}\|_\infty\|_2^2}
 {\|\boldsymbol{\theta}/\|\boldsymbol{\theta}\|_\infty\|_4^4\cdot\|\boldsymbol{\theta}\|_\infty^2}
\meg\frac{1}{\|\boldsymbol{\theta}\|_\infty^2}\meg\frac{1}{\alpha^2}.
 \end{equation}
Thus, by Proposition \ref{p2.2} applied for the vector ``$\boldsymbol{c}=(\theta_1^2,\dots,\theta_n^2)$" and
``$t = p/2$"\!, we obtain that
\begin{equation} \label{e4.29}
\mu_p(\mathcal{H}_2)\meg 1-2\exp\Big(-\frac{p^2}{2\|\boldsymbol{\theta}\|_4^4}\Big)
\stackrel{\eqref{e4.28}}{\meg} 1-2\exp\Big(-\frac{p^2}{2\alpha^2}\Big).
\end{equation}
Combining \eqref{e4.26} and \eqref{e4.29}, we see that \eqref{e4.23} is satisfied.
\medskip

\noindent \textit{Part} (ii): Now assume that $\|\boldsymbol{\theta}\|_\infty\meg \alpha$. Fix $i_0\in [n]$ such that
$|\theta_{i_0}|\meg\alpha$, and set
\begin{equation} \label{e4.30}
\mathcal{H}_3=\{H: i_0\in H\}.
\end{equation}
Observe that for every $H\in\mathcal{H}_3$ we have $\alpha\mik \|\boldsymbol{\theta}_H\|_\infty\mik \|\boldsymbol{\theta}_H\|_2$.
Consequently, for every $H\in\mathcal{H}_1\cap\mathcal{H}_3$ the random vector $\boldsymbol{X}$ is
$\big((12+\lambda)K\alpha^{-1}\big)$-subgaussian at the direction $\boldsymbol{\theta}_H$.
Since $\mu_p(\mathcal{H}_3)=p$, the result follows.
\end{proof}
We close this subsection with the following consequence of Proposition \ref{p4.8} which complements Example \ref{ex4.2}
and concerns the behavior of the probability in \eqref{e4.2} for the ``flat" vector $(1,\dots,1)\in\rr^n$.
\begin{cor} \label{c4.11}
Let $K>0$, and let\, $0< p < 1$. Also let $n, \boldsymbol{X}$ be as in Theorem~\emph{\ref{t4.1}}, and set\,
$\boldsymbol{\sigma}\coloneqq (1,\dots,1)\in\rr^n$. If\, $\boldsymbol{X}$ is $K\text{-subgaussian}$
at the direction $\boldsymbol{\sigma}$, then for every $\lambda>0$ we have
\begin{equation} \label{e4.31}
\begin{split}
\mu_p \Big(\big\{ H: \boldsymbol{X} & \text{ is $\big(\sqrt{2/p}\, (12+\lambda)(K+1)\big)$-subgaussian at the direction }
\boldsymbol{\sigma}_{H}\big\}\Big) \\
& \meg 1- 2\exp(-2\ln 2\, \lambda^2(K+1)^2) -  2\exp(-p^2n/2).
\end{split}
\end{equation}
\end{cor}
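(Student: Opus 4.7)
The plan is to deduce Corollary \ref{c4.11} directly from part (i) of Proposition \ref{p4.8} by passing to the normalized direction $\boldsymbol{\theta} \coloneqq \boldsymbol{\sigma}/\sqrt{n}$. Notice that $\|\boldsymbol{\theta}\|_2 = 1$ and $\|\boldsymbol{\theta}\|_\infty = 1/\sqrt{n}$, and that the notion of being $K$-subgaussian at a given direction is scale-invariant, since $\|\langle c\boldsymbol{\theta}, \boldsymbol{X}\rangle\|_{\psi_2} = c\,\|\langle \boldsymbol{\theta}, \boldsymbol{X}\rangle\|_{\psi_2}$ for every $c > 0$. Consequently, $\boldsymbol{X}$ is $K$-subgaussian at $\boldsymbol{\theta}$, and an estimate of the form ``$\boldsymbol{X}$ is $C$-subgaussian at $\boldsymbol{\theta}_H$'' is equivalent to ``$\boldsymbol{X}$ is $C$-subgaussian at $\boldsymbol{\sigma}_H$'', because $\boldsymbol{\theta}_H = \boldsymbol{\sigma}_H/\sqrt{n}$.

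Since Proposition \ref{p4.8} requires the subgaussianity constant to be at least $1/\sqrt{2}$, I work with $K+1$ in place of $K$; this trivially satisfies $K+1 > 1 > 1/\sqrt{2}$, and $\boldsymbol{X}$ is of course $(K+1)$-subgaussian at $\boldsymbol{\theta}$. Choosing $\alpha \coloneqq 1/\sqrt{n}$ puts us in the regime of part (i) of Proposition \ref{p4.8}, as $\|\boldsymbol{\theta}\|_\infty \mik \alpha$. An application of that result then yields, for every $\lambda > 0$, the bound
\[
\mu_p\big(\{H : \boldsymbol{X} \text{ is } \sqrt{2/p}\,(12+\lambda)(K+1)\text{-subgaussian at } \boldsymbol{\theta}_H\}\big) \meg 1 - 2\exp\big(-2\ln 2 \cdot \lambda^2 (K+1)^2\big) - 2\exp\big(-p^2 n / 2\big),
\]
where the term $2\exp\big(-p^2/(2\alpha^2)\big)$ from \eqref{e4.23} becomes $2\exp(-p^2 n /2)$ under our choice of $\alpha$. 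Translating back via scale-invariance produces exactly \eqref{e4.31}.

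No serious obstacle arises: the corollary is essentially a direct specialization of Proposition \ref{p4.8}(i) to the flat direction, where the minimality of $\|\boldsymbol{\sigma}/\sqrt{n}\|_\infty = 1/\sqrt{n}$ permits the small choice $\alpha = 1/\sqrt{n}$ and thereby produces the sharp $\exp(-p^2 n/2)$ concentration term. The only minor technical point is the inflation of $K$ to $K+1$ in order to meet the hypothesis $K \meg 1/\sqrt{2}$ of Proposition \ref{p4.8}; this is what accounts for the appearance of $(K+1)$ throughout the statement of Corollary \ref{c4.11}.
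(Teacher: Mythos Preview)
Your proof is correct and follows exactly the same approach as the paper: apply part (i) of Proposition \ref{p4.8} to the normalized direction $\boldsymbol{\theta}=\boldsymbol{\sigma}/\sqrt{n}$ with $\alpha=1/\sqrt{n}$ and with $K$ replaced by $K+1$ to ensure the hypothesis $K\meg 1/\sqrt{2}$. The paper's proof is a single sentence; your version simply spells out the scale-invariance and the reason for the $K\mapsto K+1$ inflation.
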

\begin{proof}
It follows by part (i) of Proposition \ref{p4.8} applied to the vector ``$\boldsymbol{\theta}=\boldsymbol{\sigma}/\sqrt{n}$"
(notice that $\|\boldsymbol{\theta}\|_2=1$), the constant ``$K=K+1$" and ``$\alpha=1/\sqrt{n}$\,"\!.
\end{proof}

\subsection{Proof of Theorem \ref{t4.1}} \label{subsec4.5}

The result follows by applying Proposition \ref{p4.8} for
\begin{equation} \label{e4.32}
``K= K+1", \ \ ``\lambda=\frac{1}{K+1}\, \sqrt{\frac{\log_2(4/\eta)}{2}}" \ \text{ and } \ ``\alpha=\frac{p}{\sqrt{2\ln(4/\eta)}}",
\end{equation}
and observing that
\begin{equation} \label{e4.33}
\sqrt{2/p}\, (12+\lambda) (K+1) \mik (12+\lambda)(K+1)\alpha^{-1} \mik C(K,p,\eta),
\end{equation}
by \eqref{e4.32} and the choice of $C(K,p,\eta)$ in \eqref{e4.1}. Indeed, clearly we may assume that
$\|\boldsymbol{\theta}\|_2=1$. Therefore, if\, $\|\boldsymbol{\theta}\|_\infty\mik \alpha$, then,
by \eqref{e4.23} and the previous observation,
\begin{equation} \label{e4.34}
\mu_p \big(\{ H: \boldsymbol{X} \text{ is $C(K,p,\eta)$-subgaussian at the direction } \boldsymbol{\theta}_H\}\big) \meg 1- \eta,
\end{equation}
while if\, $\|\boldsymbol{\theta}\|_\infty\meg \alpha$, then, by \eqref{e4.24},
\begin{equation} \label{e4.35}
\mu_p \big(\{ H: \boldsymbol{X} \text{ is $C(K,p,\eta)$-subgaussian at the direction } \boldsymbol{\theta}_H\}\big) \meg p-\eta.
\end{equation}
\begin{rem} \label{r4.12}
Note that the lower bound in \eqref{e4.2} can be proved without invoking Proposition \ref{p4.8}.
Indeed, one can proceed using Corollary \ref{c4.5}, the elementary identity
\begin{equation} \label{e4.36}
\underset{H\sim \mu_p}{\ave} \|\boldsymbol{\theta}_H\|_2^2 = p \,  \|\boldsymbol{\theta}\|^2_2
\end{equation}
and Markov's inequality. However, this approach yields a weaker estimate for the constant $C(K,p,\eta)$
in \eqref{e4.1} and, more importantly, it provides no information on the behavior of the probability appearing
on the left-hand side of \eqref{e4.2}.
\end{rem}


\section{Comments}

\numberwithin{equation}{section} \label{sec5}

\subsection{Extension to non-linear functions} \label{subsec5.1}

Beyond the class of linear functions, Theorem \ref{t1.1} can be extended
to certain chaoses which have a natural combinatorial interpretation: they are the homomorphism densities associated with
weighted uniform hypergraphs (see, \emph{e.g.}, \cite[Chapter 7]{L}). Of course, in order to be meaningful such an extension,
one has to select an appropriate normalization. We will adopt the scaling which appears in the bounded differences
inequality\footnote{This choice is not optimal for certain classes of functions, but it appears to be the right choice
at this level of generality.}.

\subsubsection{ \ } \label{subsubsec5.1.1}

Specifically, let $n$ be a positive integer, and let $f\colon [-1,1]^n\to\rr$ be a bounded measurable function.
For every $i\in [n]$ set
\begin{equation} \label{e5.1}
\begin{split}
\Delta_i(f)\coloneqq \sup\big\{ |f(x_1,\dots,x_n)& - f(x_1,\dots,x_{i-1},x'_i,x_{i+1},\dots, x_n)| : \\
& x_1,\dots,x_n,x'_i\in [-1,1]\big\},
\end{split}
\end{equation}
and define
\begin{equation} \label{e5.2}
\|f\|_{\Delta}\coloneqq \big( \Delta_1(f)^2+\dots+\Delta_n(f)^2\big)^{1/2}.
\end{equation}
Notice that: (i)  the quantity $\|\cdot \|_{\Delta}$ is a semi-norm, (ii) $\|f+c\|_{\Delta}=\|f\|_{\Delta}$ for every $c\in \rr$,
(iii) $\|f\|_{\Delta}=0$ if and only if the function $f$ is constant, and (iv) if $f$ is linear, that is,
$f(x_1,\dots,x_n)=\theta_1 x_1+\dots+\theta_n x_n$, then $\|f\|_{\Delta}=2\|(\theta_1,\dots,\theta_n)\|_2$.

\subsubsection{ \ }  \label{subsubsec5.1.2}

Next, let $\boldsymbol{X}$ be a random vector in $\rr^n$ with $[-1,1]$-valued entries.
Given $K>0$, we say that $\boldsymbol{X}$ \textit{is $K$-subgaussian with respect to $f$} if
\begin{equation} \label{e5.3}
\|f(\boldsymbol{X})\|_{\psi_2} \mik K\, \|f\|_{\Delta}.
\end{equation}
Observe that if $f(x_1,\dots,x_n)=\theta_1 x_1+\dots+\theta_n x_n$ is linear, then this is equivalent to saying that
$\boldsymbol{X}$ is $K$-subgaussian at the direction $(\theta_1,\dots,\theta_n)$. Also note that if the random vector
$\boldsymbol{X}$ has independent entries, then the bounded differences inequality yields that $\boldsymbol{X}$ is
$O(1)\text{-subgaussian}$ with respect to $f-\ave[f(\boldsymbol{X})]$.

\subsubsection{ \ } \label{subsubsec5.1.3}

It is also straightforward to extend \eqref{e1.4}. Precisely, for every subset $H$ of~$[n]$ let
$f_H\colon [-1,1]^n\to\rr$ denote the function defined by
\begin{equation} \label{e5.4}
f_H(x_1,\dots,x_n)\coloneqq f\big( \pi_H(x_1,\dots,x_n)\big)
\end{equation}
where $\pi_H(x_1,\dots,x_n)=(x'_1,\dots,x'_n)$ with $x'_i=x_i$ if $i\in H$, and $x'_i=0$ otherwise.

Thus, the non-linear version of the question discussed in the introduction is whether the subgaussian behavior of the random
vector $\boldsymbol{X}$ with respect to the function $f$ is reflected to/characterized by the typical subgaussian behavior
of $\boldsymbol{X}$ with respect to $f_H$ where $H$ is random subset of $[n]$.

\subsubsection{ \ } \label{subsubsec5.1.4}

It is likely that this problem is rather delicate. As we have mentioned, we will consider the case
where the function $f$ is the homomorphism density associated with a weighted uniform hypergraph.

More precisely, let $d$ be a positive integer. For every integer $n\meg d$ and every $A\subseteq [n]$
by ${A\choose d}$ we denote the set of all subsets of $A$ of cardinality $d$. Let $\mathcal{W}$ be a weighted $d\text{-uniform}$ hypergraph,
that is, $\mathcal{W}$ is a map which assigns to every hyperedge $e\in {[n]\choose d}$ a weight $\mathcal{W}(e)\in \rr$.
The homomorphism density function associated with $\mathcal{W}$ is the map $\hom_{\mathcal{W}}\colon [-1,1]^n\to\rr$ defined by
\begin{equation} \label{e5.5}
\hom_{\mathcal{W}}(x_1,\dots, x_n)\coloneqq \sum_{e\in{[n]\choose d}} \mathcal{W}(e) \prod_{i\in e} x_i.
\end{equation}
Note that if $H$ is a subset of $[n]$, then the restriction $(\hom_{\mathcal{W}})_H$ of $\hom_{\mathcal{W}}$
defined in~\eqref{e5.4} is naturally identified with the homomorphism density function $\hom_{\mathcal{W}[H]}$
associated with the induced on $H$ sub-hypergraph $\mathcal{W}[H]$ of $\mathcal{W}$.

\subsubsection{ \ } \label{subsubsec5.1.5}

We have the following theorem.
\begin{thm} \label{t5.1}
The following hold.
\begin{enumerate}
\item[(1)] Let $K>0$, let\, $0<p<1$, and let $d$ be a positive integer. Also let $n\meg d$ be an integer,
let $\boldsymbol{X}$ be a random vector in $\rr^n$ with $[-1,1]\text{-valued}$ entries, and let $\mathcal{W}$
be a weighted $d$-uniform hypergraph on~$[n]$. If\, $\boldsymbol{X}$ is $K\text{-subgaussian}$ with respect
to $\hom_{\mathcal{W}}$, then for every $C>0$
\begin{equation} \label{e5.6}
\mu_p\big( \{ H: \boldsymbol{X} \text{ is $C$-subgaussian with respect to } \hom_{\mathcal{W}[H]}\}\big)
 \meg p^d-o_{C\to\infty;K,p,d}(1).
\end{equation}
\item[(2)] Conversely, let $K>0$, let\, $0<p<1$, let $0<\gamma\mik 1$, and let $d$ be a positive integer.
Also let $n\meg d$ be an integer, let $\boldsymbol{X}$ be a random vector in\, $\rr^n$ with $[-1,1]\text{-valued}$ entries,
and let $\mathcal{W}$ be a weighted $d\text{-uniform}$ hypergraph on~$[n]$. If
\[ \mu_p\big( \{ H: \boldsymbol{X} \text{ is $K$-subgaussian with respect to } \hom_{\mathcal{W}[H]}\}\big)\meg \gamma, \]
then $\boldsymbol{X}$ is $O_{K,p,\gamma,d}(1)$-subgaussian with respect to $\hom_{\mathcal{W}}$.
\end{enumerate}
\end{thm}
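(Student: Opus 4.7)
The plan is to mimic the structure of Sections \ref{sec3} and \ref{sec4}, treating the hypergraph homomorphism density $\hom_{\mathcal{W}}$ as the natural $d$-homogeneous analogue of a linear functional. The role previously played by $\boldsymbol{\theta}_H$ is taken by $\hom_{\mathcal{W}[H]}$, that of $\|\boldsymbol{\theta}\|_2$ by $\|\hom_{\mathcal{W}}\|_\Delta$, and that of $p$ by $p^d$, since under $\mu_p$ each fixed hyperedge $e\in {[n]\choose d}$ survives in $H$ with probability exactly $p^d$.

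For part (2), the starting point is the identity
\begin{equation*}
\hom_{\mathcal{W}}(\boldsymbol{X}) \;=\; p^{-d}\sum_{H\subseteq[n]}\mu_p(\{H\})\,\hom_{\mathcal{W}[H]}(\boldsymbol{X}),
\end{equation*}
the exact analogue of Fact \ref{f3.3}, which follows by linearity from $\pp(e\subseteq H)=p^d$. Together with the triangle inequality for $\|\cdot\|_{\psi_2}$ this gives $\|\hom_{\mathcal{W}}(\boldsymbol{X})\|_{\psi_2}\mik p^{-d}\ave_{H\sim\mu_p}\|\hom_{\mathcal{W}[H]}(\boldsymbol{X})\|_{\psi_2}$. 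To bound the mean on the right, I would apply Proposition \ref{p2.3} to the map $H\mapsto \|\hom_{\mathcal{W}[H]}(\boldsymbol{X})\|_{\psi_2}$: a one-coordinate flip changes $\hom_{\mathcal{W}[H]}$ pointwise by at most $\Delta_i(\hom_{\mathcal{W}})$, hence changes its $\psi_2$-norm by at most $\Delta_i(\hom_{\mathcal{W}})/\sqrt{\ln 2}$. The resulting analogue of Lemma \ref{l3.4} concentrates this $\psi_2$-norm around its mean at scale $\|\hom_{\mathcal{W}}\|_\Delta$, and selecting $H$ in the intersection of the good event in the hypothesis with this concentration event yields the $O_{K,p,\gamma,d}(1)$ bound exactly as in the proof of Proposition \ref{p3.1}.

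Part (1) follows the blueprint of Section \ref{sec4}. First I would establish a reverse triangle inequality analogous to Proposition \ref{p4.3}, namely that for $\lambda$ sufficiently large
\begin{equation*}
\mu_p\bigl(\bigl\{H:\|\hom_{\mathcal{W}[H]}(\boldsymbol{X})\|_{\psi_2}\meg\lambda K\|\hom_{\mathcal{W}}\|_\Delta\bigr\}\bigr)\;\mik\; 3\exp(-c_d\lambda^2).
\end{equation*}
The proof mirrors Lemma \ref{l4.6}: for each fixed $\omega$ the real-valued map $H\mapsto \hom_{\mathcal{W}[H]}(\boldsymbol{X}(\omega))$ has bounded differences of size $\Delta_i(\hom_{\mathcal{W}})$ and mean $p^d\hom_{\mathcal{W}}(\boldsymbol{X}(\omega))$, so Proposition \ref{p2.3} concentrates it around this mean at scale $\|\hom_{\mathcal{W}}\|_\Delta$. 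Combined with the subgaussian tail of $\hom_{\mathcal{W}}(\boldsymbol{X})$ coming from the hypothesis, Fubini, Markov's inequality, and the dyadic peeling of Sublemma \ref{s4.7} deliver the desired $\psi_2$-tail bound on $\hom_{\mathcal{W}[H]}(\boldsymbol{X})$. This in turn yields direct analogues of Corollaries \ref{c4.4} and \ref{c4.5}.

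The last and hardest step is the dichotomy corresponding to Proposition \ref{p4.8}. Setting $\|\mathcal{W}\|_\infty\coloneqq\max_{e}|\mathcal{W}(e)|$ and fixing a threshold $\alpha$, I would split into two cases. If $\|\mathcal{W}\|_\infty\meg\alpha\|\hom_{\mathcal{W}}\|_\Delta$, a dominant edge $e_0$ exists; conditioning on $\{H:e_0\subseteq H\}$, an event of $\mu_p$-mass exactly $p^d$, forces $\|\hom_{\mathcal{W}[H]}\|_\Delta\meg\alpha\|\hom_{\mathcal{W}}\|_\Delta$ deterministically, and combining with the reverse triangle inequality gives an $O(K\alpha^{-1})$-subgaussianity constant and produces the $p^d$ term in \eqref{e5.6}. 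If instead $\|\mathcal{W}\|_\infty\mik\alpha\|\hom_{\mathcal{W}}\|_\Delta$ (the spread case), the goal is to show that $\|\hom_{\mathcal{W}[H]}\|_\Delta$ is at least a $p$-dependent fraction of $\|\hom_{\mathcal{W}}\|_\Delta$ with probability $1-o_{\alpha\to 0}(1)$; optimizing $\alpha$ as in Subsection \ref{subsec4.5} then produces the claimed bound. The main obstacle I expect is precisely this spread case: the quantity $\|\hom_{\mathcal{W}[H]}\|^2_\Delta=\sum_i\Delta_i(\hom_{\mathcal{W}[H]})^2$ is a sum of squares of partial sums over hyperedges through $i$, so it is not linear in $\mathbf{1}_H$ and seems to require a multilinear Hoeffding-type estimate, or a second-moment computation that carefully handles overlapping edges, rather than a direct application of Proposition \ref{p2.2}.
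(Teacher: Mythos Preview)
Your treatment of part (2), and of the large deviation inequality for part (1), is essentially the paper's: the identity $\hom_{\mathcal{W}}=p^{-d}\ave_H\hom_{\mathcal{W}[H]}$, the bounded-differences concentration of $H\mapsto\|\hom_{\mathcal{W}[H]}(\boldsymbol{X})\|_{\psi_2}$, and the two-step proof of the reverse triangle inequality (Proposition~\ref{p2.3} on $H$ for fixed $\omega$, then Fubini, Markov, and the dyadic peeling of Sublemma~\ref{s4.7}) all appear verbatim in the Appendix.

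Where you diverge is in the final step of part (1). You attempt the full dichotomy of Proposition~\ref{p4.8}, and you correctly isolate the obstacle: in the spread case the quantity $\|\hom_{\mathcal{W}[H]}\|_\Delta^2$ is genuinely nonlinear in $\mathbf{1}_H$, and a direct Hoeffding/bounded-differences argument on it does not go through cleanly. The paper does \emph{not} resolve this obstacle. Instead it bypasses the dichotomy altogether (this is the route flagged in Remark~\ref{r4.12} for the linear case): from the identity \eqref{ea.1} and the triangle inequality one has $p^d\|\hom_{\mathcal{W}}\|_\Delta\mik\ave_{H}\|\hom_{\mathcal{W}[H]}\|_\Delta$, while trivially $\|\hom_{\mathcal{W}[H]}\|_\Delta\mik\|\hom_{\mathcal{W}}\|_\Delta$; a one-line Markov argument then gives
\[
\mu_p\bigl(\bigl\{H:\|\hom_{\mathcal{W}[H]}\|_\Delta\meg\tfrac{\eta}{2}\|\hom_{\mathcal{W}}\|_\Delta\bigr\}\bigr)\meg p^d-\tfrac{\eta}{2},
\]
and intersecting with the good event from the analogue of Corollary~\ref{c4.5} finishes the proof. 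The price is a worse dependence of the subgaussianity constant on $\eta$ (linear in $1/\eta$ rather than $\sqrt{\log(1/\eta)}/p$). Your dominant-edge case would yield the sharper constant on that branch, but the paper notes (Remark~\ref{ra.9}) that the spread case, i.e.\ the analogue of part~(i) of Proposition~\ref{p4.8}, is only established under an additional pseudorandomness hypothesis on $\mathcal{W}$; so your concern there is well founded, and for general $\mathcal{W}$ the Markov shortcut is what actually closes the argument.
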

The proof of Theorem \ref{t5.1} is similar to the proof of Theorem \ref{t1.1}; for the convenience of the reader
we present the details in the Appendix.

We also note that the lower bound in \eqref{e5.6} is optimal. Specifically, we have the following analogue of Example \ref{ex4.2}.
\begin{examp} \label{ex5.1}
Fix a positive integer $d$, and let $n\meg d$ be an arbitrary integer. We define a weighted
$d$-uniform hypergraph $\mathcal{E}$ on $[n+d]$ by the rule
\[ \mathcal{E}(e)\coloneqq
\begin{cases}
{[n]\choose d} & \text{if } e=\{1,\dots,d\}, \\
-1 & \text{if } e\subseteq \{d+1,\dots, n+d\}, \\
0 & \text{otherwise}.
\end{cases} \]
Also fix a $[-1,1]$-valued random variable $Z$, and let $\boldsymbol{X}=(X_1,\dots,X_{n+d})$ be the random vector in $\rr^{n+d}$
defined by setting $X_i=Z$ for every $i\in [n+d]$. Observe that $\hom_{\mathcal{E}}(\boldsymbol{X})=0$, and so $\boldsymbol{X}$
is $K\text{-subgaussian}$ with respect to $\hom_{\mathcal{E}}$ for any~$K>0$. Next, let $0<p<1$ be arbitrary, and set
\[ \mathcal{H}\coloneqq \big\{H\subseteq [n+d]:\{1,\dots,d\}\nsubseteq H \text{ and } |H\cap\{d+1,\dots,n+d\}|\meg pn/2\big\}. \]
By~Proposition \ref{p2.2}, we see that $\mu_p(\mathcal{H})= 1-p^d-o_{n\to\infty;p,d}(1)$. Fix $H\in\mathcal{H}$
and set $G\coloneqq H\cap \{d+1,\dots, n+d\}$. Since $\hom_{\mathcal{E}[H]}(\boldsymbol{X})= -{G\choose d} Z^d$, we have
\[ \|Z^d\|_{\psi_2} \big(p^d/2^d-o_{n\to\infty;p,d}(1)\big)\, n^d \mik \|\hom_{\mathcal{E}[H]}(\boldsymbol{X})\|_{\psi_2}. \]
On the other hand, note that
\begin{enumerate}
\item[$\bullet$] $\Delta_i(\hom_{\mathcal{E}[H]})=0$ if $i\in \{1,\dots,d\}$ (this is because $\{1,\dots,d\}\nsubseteq H$),
\item[$\bullet$] $\Delta_i(\hom_{\mathcal{E}[H]})=0$ if $i\in \{d+1,\dots,n+d\}\setminus H$, and
\item[$\bullet$] $|\Delta_i(\hom_{\mathcal{E}[H]})| \mik 2 {G\choose d-1} \mik 2n^{d-1}$ if $i\in H\cap \{d+1,\dots,n+d\}$
\end{enumerate}
which implies that $\|\hom_{\mathcal{E}[H]}\|_{\Delta}\mik 2 n^{d-1/2}$. Therefore, if $K$ is any positive real such that
$\boldsymbol{X}$ is $K\text{-subgaussian}$ with respect to $\hom_{\mathcal{E}[H]}$, then
\[ K\meg \|Z^d\|_{\psi_2} \big(p^d/2^{d+1}-o_{n\to\infty;p,d}(1)\big) \sqrt{n}. \]
Thus, for any $C>0$ we have
\[  \mu_p\big( \{ H: \boldsymbol{X} \text{ is $C$-subgaussian with respect to } \hom_{\mathcal{E}[H]}\}\big)\mik p^d+o_{n\to\infty;p,d,C}(1). \]
\end{examp}

\subsection{Extension to partially subgaussian random vectors} \label{subsec5.2}

Let $n$ be a positive integer, and let $\boldsymbol{X}$ be a random
vector in $\rr^n$. Given $K,\tau>0$ and $\boldsymbol{\theta}\in \rr^n$, we say\footnote{This terminology is not
standard.} that $\boldsymbol{X}$ is \emph{$(K,\tau)$-partially subgaussian at the direction} $\boldsymbol{\theta}$ provided that
\begin{equation} \label{e5.7}
\mathbb{P}\big( \big\{|\langle \boldsymbol{\theta},\boldsymbol{X}\rangle|\meg t\big\}\big) \mik
2\exp\Big( -\frac{t^2}{K^2\|\boldsymbol{\theta}\|^2_2}\Big) \ \ \ \text{for every } t\meg \tau.
\end{equation}
Notice that if $\tau=O(\|\boldsymbol{\theta}\|_2)$, then this is equivalent to saying that
the random vector $\boldsymbol{X}$ is $O_{K}(1)$-subgaussian at the direction $\boldsymbol{\theta}$.
Thus, this notion is of interest when~$\tau$~is significantly larger than $\|\boldsymbol{\theta}\|_2$.
Examples of random vectors which are partially subgaussian with parameters in this regime appear frequently
in combinatorics, most~notably in various density increment strategies. Specifically, one encounters random
vectors in $\rr^n$ which are $(K,\tau)$-partially subgaussian at the direction $(1,\dots,1)\in \rr^n$ with
$K=O(1)$ and $\tau=\eta n$ where $\eta>0$ is a very small constant; see \cite[Part 2]{DK}. The understanding
of the statistical/concentration properties of these examples was the starting point of the present paper.

\subsubsection{ \ } \label{subsubsec5.2.1}

It is not hard to see that Theorem \ref{t1.1} can be extended to $(K,\tau)$-partially subgaussian
random vectors, but of course one is also interested in determining the quantitative dependence on the parameter $\tau$.
In this direction we have the following analogue of Proposition \ref{p4.8}.
\begin{prop} \label{p5.3}
Let $K\meg 1/\sqrt{2}$, let\, $0<p<1$, and let $\tau\meg\max\{p^{-1},\sqrt{2}K\}$.~Also let $n$ be a positive integer,
let $\boldsymbol{X}$ be a random vector in $\rr^n$ with $[-1,1]\text{-valued}$ entries, and let $\boldsymbol{\theta}\in \rr^n$
with $\|\boldsymbol{\theta}\|_2=1$. Assume that $\boldsymbol{X}$ is $(K,\tau)\text{-partially}$ subgaussian at the
direction~$\boldsymbol{\theta}$. Finally, let $0<\alpha\mik 1$. Then the following hold.
\begin{enumerate}
\item [(i)] If\, $\|\boldsymbol{\theta}\|_\infty\mik \alpha$, then
\begin{equation} \label{e5.8}
\begin{split}
\mu_p \Big(\big\{ H: \boldsymbol{X} & \text{ is $(2K/p,2p\tau)$-partially subgaussian at the direction } \boldsymbol{\theta}_H\big\}\Big) \\
& \meg 1- 3\exp\Big(- \frac{p^2\tau^2}{2K^2} \Big) -  2\exp\Big(-\frac{p^2}{2\alpha^2}\Big).
\end{split}
\end{equation}
\item [(ii)] If\, $\|\boldsymbol{\theta}\|_\infty\meg \alpha$, then
\begin{equation} \label{e5.9}
\begin{split}
\mu_p \Big(\big\{ H: \boldsymbol{X} & \text{ is $(2\sqrt{2}K/\alpha,2p\tau)$-partially subgaussian at the direction }
\boldsymbol{\theta}_H\big\}\Big) \\
& \meg p - 3\exp\Big(- \frac{p^2\tau^2}{2K^2} \Big).
\end{split}
\end{equation}
\end{enumerate}
\end{prop}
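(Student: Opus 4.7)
The plan is to mirror the proof of Proposition~\ref{p4.8}, replacing the $\psi_2$-norm control of Corollary~\ref{c4.5} (which is unavailable in the partial subgaussian regime) with a direct uniform tail estimate valid for $t \geq 2p\tau$. Once such an estimate is established, parts (i) and (ii) follow by intersecting with the Hoeffding event $\{\|\boldsymbol{\theta}_H\|_2 > \sqrt{p/2}\}$ or with the conditioning event $\{i_0 \in H\}$ for a coordinate $i_0$ with $|\theta_{i_0}| \geq \alpha$, exactly as in Subsection~\ref{subsec4.4}.

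First I would adapt Step~1 of Lemma~\ref{l4.6}. For fixed $t \geq 2p\tau$, the event $\{|\langle \boldsymbol{\theta}_H, \boldsymbol{X}\rangle| \geq t\}$ is contained in the union of $\{|\langle \boldsymbol{\theta}, \boldsymbol{X}\rangle| \geq t/(2p)\}$ and $\{|\langle \boldsymbol{\theta}_H, \boldsymbol{X}\rangle - p\langle \boldsymbol{\theta}, \boldsymbol{X}\rangle| \geq t/2\}$. The first event admits the bound $2\exp(-t^2/(4p^2 K^2))$ by the partial subgaussian hypothesis \emph{precisely because} $t/(2p) \geq \tau$; this is where the threshold $2p\tau$ in the conclusion is born. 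The second event, for each fixed $\omega$, is controlled by Proposition~\ref{p2.2} applied to the vector $(\theta_i X_i(\omega))_{i=1}^n$, giving $\mu_p$-mass at most $2\exp(-t^2/2)$. Fubini and the square-root-of-Markov trick then yield, for each $t \geq 2p\tau$, a set $\mathcal{C}^{(t)}$ of $\mu_p$-measure $\geq 1 - 2\exp(-t^2/(2Q^2))$ on which $\mathbb{P}(|\langle \boldsymbol{\theta}_H, \boldsymbol{X}\rangle| \geq t) \leq 2\exp(-t^2/(2Q^2))$, where $Q = \max\{\sqrt{2}, 2pK\}$.

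Next, to obtain the uniform-in-$t$ statement required by the definition of partial subgaussianity, I would run the dyadic discretization of Step~2 of Lemma~\ref{l4.6} at thresholds $t_j \coloneqq 2^j \cdot 2p\tau$ for $j \in \mathbb{N}$. The hypothesis $\tau \geq \max\{p^{-1}, \sqrt{2}K\}$ guarantees $2p\tau \geq \sqrt{2 \ln 2}\, Q$, which makes the resulting geometric sum converge and produces a single set $\mathcal{C} \coloneqq \bigcap_j \mathcal{C}^{(t_j)}$ with $\mu_p(\mathcal{C}) \geq 1 - 3\exp(-(2p\tau)^2/(2Q^2)) \geq 1 - 3\exp(-p^2\tau^2/(2K^2))$; on $\mathcal{C}$, the dyadic interpolation of Sublemma~\ref{s4.7} upgrades the pointwise bounds into a uniform tail bound $\mathbb{P}(|\langle \boldsymbol{\theta}_H, \boldsymbol{X}\rangle| \geq t) \leq 2\exp(-t^2/C_0^2)$ for every $t \geq 2p\tau$, with $C_0 = C_0(K,p)$ independent of $H$. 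Note that we stop at the tail-bound level rather than passing to $\psi_2$: the partial subgaussianity conclusion only requires tails above $2p\tau$, so we can bypass the loss incurred by Sublemma~\ref{s4.7} on small~$t$.

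Finally, the two parts follow by intersecting $\mathcal{C}$ with the appropriate event on $\|\boldsymbol{\theta}_H\|_2$. In part~(i), the Hoeffding estimate \eqref{e4.29} applied to $(\theta_i^2)$ with $t = p/2$, using $\|\boldsymbol{\theta}\|_\infty \leq \alpha$, contributes the second error term $2\exp(-p^2/(2\alpha^2))$ in \eqref{e5.8}, and the resulting lower bound $\|\boldsymbol{\theta}_H\|_2 \geq \sqrt{p/2}$ rewrites the absolute tail bound as $(2K/p, 2p\tau)$-partial subgaussianity at $\boldsymbol{\theta}_H$. In part~(ii), the event $\{i_0 \in H\}$ contributes $\mu_p$-measure $p$ and gives $\|\boldsymbol{\theta}_H\|_2 \geq \alpha$, converting the absolute tail bound into $(2\sqrt{2}K/\alpha, 2p\tau)$-partial subgaussianity. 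I expect the main technical difficulty to be the careful bookkeeping in the dyadic step so that $C_0$ aligns with the advertised factors $2K/p$ and $2\sqrt{2}K/\alpha$; the role of the hypothesis $\tau \geq \max\{p^{-1}, \sqrt{2}K\}$ is precisely to absorb the constant-factor losses inherent in the dyadic interpolation and the square-root-of-Markov trick.
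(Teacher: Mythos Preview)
Your proposal is correct and follows essentially the same route as the paper, which only sketches the argument: it says the proof ``follows the lines of the proof of Proposition~\ref{p4.8}'' with Corollary~\ref{c4.5} replaced by ``a straightforward variant of Lemma~\ref{l4.6} for partially subgaussian random vectors,'' and that the exponential gain comes from needing tail control only above the threshold. You have identified precisely this variant---running Step~1 of Lemma~\ref{l4.6} for $t\meg 2p\tau$ so that the partial-subgaussian hypothesis applies at $t/(2p)\meg\tau$, then the dyadic Step~2 starting at $M=2p\tau$---and correctly explained the role of the hypothesis $\tau\meg\max\{p^{-1},\sqrt{2}K\}$ in making the dyadic sum converge; your honest flag about the bookkeeping needed to match the exact constants $2K/p$ and $2\sqrt{2}K/\alpha$ is appropriate, since the paper leaves those details to the reader.
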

In particular, Proposition \ref{p5.3} yields that if $K\meg 1/\sqrt{2}$, $\tau=\eta n$ for some $\eta>0$,
$n\meg \max\{2K^2,p^{-2}\}/\eta^2$, and the random vector $\boldsymbol{X}$ is $(K,\tau)\text{-partially}$ subgaussian
at the direction $(1,\dots,1)\in \rr^n$, then the probability on the left-hand side of \eqref{e5.8} is at least
\begin{equation} \label{e5.10}
1- 3\exp\Big(- \frac{p^2\eta^2n}{2K^2} \Big) -  2\exp\Big(-\frac{p^2n}{2}\Big);
\end{equation}
that is, we have an exponential improvement upon \eqref{e4.31}.

\subsubsection{ \ } \label{subsubsec5.2.2}

Not surprisingly, the proof of Proposition \ref{p5.3} follows the lines of the proof of Proposition~\ref{p4.8}.
The only difference is that, instead of Corollary \ref{c4.5}, it uses a straightforward variant of Lemma \ref{l4.6} for partially
subgaussian random vectors. (In particular, the exponential gain in \eqref{e5.10} comes from the fact that we need to control
the tails up to~$\tau$.) We leave the details to the interested reader.

\subsection{Extension to not necessarily bounded random vectors} \label{5.3}

It is open to us whether part (1) of Theorem \ref{t1.1}
can be extended to random vectors with subgaussian, but not necessarily bounded, entries. Although the boundedness of
$\boldsymbol{X}$ is used only in \eqref{e4.13}, the strategy of our proof uses this property in an essential way
and it cannot be dropped by merely optimizing the argument.


\appendix

\section{Proof of Theorem \ref{t5.1}} \label{secA}

\numberwithin{equation}{section}

\subsection{Preliminary tools} \label{subsecA.1}

We begin by observing the following two simple facts; they will be used
in the proofs of both parts of Theorem \ref{t5.1}.
\begin{fact} \label{fa.1}
Let $0<p<1$, let $d,n$ be positive integers with $d\mik n$, and let $\mathcal{W}$ be a weighted $d$-uniform hypergraph on $[n]$.
Then, for any $\boldsymbol{x}\in [-1,1]^n$ we have
\begin{equation} \label{ea.1}
p^d\, \hom_{\mathcal{W}}(\boldsymbol{x})=\underset{H\sim \mu_p}{\ave} \hom_{\mathcal{W}[H]}(\boldsymbol{x}).
\end{equation}
In particular, if $\boldsymbol{X}$ is a random vector in $\rr^n$ with $[-1,1]$-valued entries, then
\begin{equation} \label{ea.2}
p^d\, \|\hom_{\mathcal{W}}(\boldsymbol{X})\|_{\psi_2} \mik \underset{H\sim \mu_p}{\ave} \|\hom_{\mathcal{W}[H]}(\boldsymbol{X})\|_{\psi_2}.
\end{equation}
\end{fact}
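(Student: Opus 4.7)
The plan is to mimic the argument of Fact \ref{f3.3}, replacing the sum over singletons by a sum over $d$-subsets of $[n]$.

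First I would unpack the definitions. Fix $\boldsymbol{x}\in [-1,1]^n$. Observing that for every $H\subseteq [n]$ the restricted hypergraph $\mathcal{W}[H]$ is supported on the hyperedges $e\in \binom{[n]}{d}$ with $e\subseteq H$, we have
\[ \hom_{\mathcal{W}[H]}(\boldsymbol{x}) = \sum_{\substack{e\in \binom{[n]}{d} \\ e\subseteq H}} \mathcal{W}(e)\prod_{i\in e} x_i. \]
Averaging over $H\sim\mu_p$ and interchanging the two sums, the identity \eqref{ea.1} reduces to the computation
\[ \underset{H\sim \mu_p}{\ave} \mathbf{1}_{\{e\subseteq H\}} = \mu_p\big(\{H: e\subseteq H\}\big) = p^d \]
for every fixed $e\in \binom{[n]}{d}$, which is immediate from the definition of the $p$-biased measure (the $d$ coordinates indexed by $e$ must each take the value $1$, and the remaining coordinates are unconstrained).

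For the second assertion, rewrite \eqref{ea.1} with $\boldsymbol{x}=\boldsymbol{X}$ as
\[ \hom_{\mathcal{W}}(\boldsymbol{X}) = p^{-d}\sum_{H\subseteq [n]} \mu_p(\{H\})\, \hom_{\mathcal{W}[H]}(\boldsymbol{X}), \]
which expresses $\hom_{\mathcal{W}}(\boldsymbol{X})$ as a convex-type combination of the random variables $\hom_{\mathcal{W}[H]}(\boldsymbol{X})$. Taking $\psi_2$-norms and applying the triangle inequality for $\|\cdot\|_{\psi_2}$ yields \eqref{ea.2}. There is no genuine obstacle here; the only subtlety is simply the identification of the probability $\mu_p(\{H:e\subseteq H\})$, which is the $d$-dimensional analogue of the factor $p$ appearing in Fact \ref{f3.3}.
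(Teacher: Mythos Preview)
Your argument is correct and essentially identical to the paper's proof: both expand $\hom_{\mathcal{W}[H]}(\boldsymbol{x})$, swap the sum over $e$ with the expectation over $H$, and use that $\mu_p(\{H:e\subseteq H\})=p^d$ (the paper writes this via $\prod_{i\in e}\mathbf{1}_H(i)$, which is just $\mathbf{1}_{\{e\subseteq H\}}$), then derive \eqref{ea.2} from \eqref{ea.1} and the triangle inequality for $\|\cdot\|_{\psi_2}$.
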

\begin{proof}
Write $\boldsymbol{x}=(x_1,\dots,x_n)$ and notice that
\begin{eqnarray*}
\underset{H\sim \mu_p}{\ave} \hom_{\mathcal{W}[H]}(\boldsymbol{x}) & = &
\underset{H\sim \mu_p}{\ave} \sum_{e\in {H \choose d}} \mathcal{W}(e) \prod_{i\in e} x_i
 = \underset{H\sim \mu_p}{\ave} \sum_{e\in {[n] \choose d}} \mathcal{W}(e) \prod_{i\in e} x_i \mathbf{1}_H(i) \\
& = & \sum_{e\in {[n] \choose d}} \mathcal{W}(e)\underset{H\sim \mu_p}{\ave} \prod_{i\in e} x_i \mathbf{1}_H(i)
 = \sum_{e\in {[n] \choose d}} \mathcal{W}(e)\, p^d\, \prod_{i\in e} x_i \\
& = & p^d\, \hom_{\mathcal{W}}(\boldsymbol{x}).
\end{eqnarray*}
The estimate in \eqref{ea.2} follows from \eqref{ea.1} and the triangle inequality.
\end{proof}
\begin{fact}\label{fa.2}
Let $n$ be a positive integer, let $\boldsymbol{X}$ be a random vector in $\rr^n$ with
$[-1,1]\text{-valued}$ entries, and let $f\colon [-1,1]^n\to\rr$ be a bounded measurable function. Define
$g\colon \{0,1\}^n\to\rr$ by setting $g(H)=\| f_H(\boldsymbol{X})\|_{\psi_2}$ for every $H\subseteq [n]$, where
$f_H$ is as in \eqref{e5.4}. Then we have
\begin{equation} \label{ea.3}
\|g\|_\Delta \mik \frac{\|f\|_\Delta}{\sqrt{\ln2}}.
\end{equation}
\end{fact}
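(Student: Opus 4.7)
The plan is to bound $\Delta_i(g)$ coordinatewise and then sum over $i$. So fix $i\in[n]$ and any $H\subseteq [n]\setminus\{i\}$; I want to estimate
\[
|g(H\cup\{i\})-g(H)|=\big|\|f_{H\cup\{i\}}(\boldsymbol{X})\|_{\psi_2}-\|f_H(\boldsymbol{X})\|_{\psi_2}\big|.
\]
By the triangle inequality for the $\psi_2$-seminorm, this is at most $\|f_{H\cup\{i\}}(\boldsymbol{X})-f_H(\boldsymbol{X})\|_{\psi_2}$.

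The key pointwise observation is that, by definition of $\pi_{H}$ and $\pi_{H\cup\{i\}}$, the vectors $\pi_H(\boldsymbol{X})$ and $\pi_{H\cup\{i\}}(\boldsymbol{X})$ differ only in the $i$-th coordinate (the former has a $0$ there, the latter has $X_i\in[-1,1]$). Hence, by the definition of $\Delta_i(f)$ in \eqref{e5.1},
\[
|f_{H\cup\{i\}}(\boldsymbol{X})-f_H(\boldsymbol{X})|\mik \Delta_i(f) \quad \text{almost surely.}
\]
Now I need the standard fact that any random variable $Y$ with $|Y|\mik c$ almost surely satisfies $\|Y\|_{\psi_2}\mik c/\sqrt{\ln 2}$; this is immediate from the defining inequality $\ave[e^{(Y/s)^2}]\mik 2$ in \eqref{e1.1}, since $e^{(c/s)^2}\mik 2$ forces $s\meg c/\sqrt{\ln 2}$. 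Applying this with $Y=f_{H\cup\{i\}}(\boldsymbol{X})-f_H(\boldsymbol{X})$ and $c=\Delta_i(f)$ yields $\Delta_i(g)\mik \Delta_i(f)/\sqrt{\ln 2}$.

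Squaring and summing over $i\in[n]$ gives
\[
\|g\|_\Delta^2=\sum_{i=1}^n \Delta_i(g)^2 \mik \frac{1}{\ln 2}\sum_{i=1}^n \Delta_i(f)^2 =\frac{\|f\|_\Delta^2}{\ln 2},
\]
and taking square roots produces \eqref{ea.3}. There is no real obstacle here: the only nontrivial ingredient is the $\psi_2$-bound for bounded random variables, and the rest is the triangle inequality together with the coordinatewise interpretation of $\pi_H$.
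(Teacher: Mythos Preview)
Your proof is correct and follows essentially the same approach as the paper: bound $\Delta_i(g)$ coordinatewise via the triangle inequality for $\|\cdot\|_{\psi_2}$, the pointwise bound $|f_{H\cup\{i\}}(\boldsymbol{X})-f_H(\boldsymbol{X})|\mik \Delta_i(f)$, and the elementary estimate $\|Y\|_{\psi_2}\mik \|Y\|_{L_\infty}/\sqrt{\ln 2}$, then pass to the $\ell_2$ sum. Your write-up is in fact slightly more explicit than the paper's (you spell out why the pointwise bound holds and carry out the final squaring-and-summing step), but there is no substantive difference.
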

\begin{proof}
The desired estimate is a consequence of the fact that for every bounded random variable $Y$ we have
\begin{equation} \label{ea.4}
\|Y\|_{\psi_2}\mik \frac{\|Y\|_{L_{\infty}}}{\sqrt{\ln2}}.
\end{equation}
Indeed, fix $i\in[n]$ and $H\subseteq [n]\setminus\{i\}$, and observe that
\begin{eqnarray*}
\Delta_i(f) & \stackrel{\eqref{e5.1}}{\meg} & \sup\{ |f_{H\cup\{i\}}(\boldsymbol{x}) - f_{H}(\boldsymbol{x})| : \boldsymbol{x}\in[-1,1]^n \} \\
& \stackrel{\eqref{ea.4}}{\meg} & \sqrt{\ln2}\; \|f_{H\cup\{i\}}(\boldsymbol{X}) - f_{H}(\boldsymbol{X})\|_{\psi_2}
\meg \sqrt{\ln2}\; | g(H\cup\{i\}) - g(H)|.
\end{eqnarray*}
Thus, we have $\Delta_i(g)\mik\Delta_i(f)/\sqrt{\ln2}$ for every $i\in [n]$. This, in turn, implies inequality \eqref{ea.3}.
\end{proof}

\subsection{Proof of part (2)} \label{subsecA.2}

Let $K, p, \gamma, d, n, \boldsymbol{X}, \mathcal{W}$ be as in part (2) of Theorem \ref{t5.1}, and~set
\begin{equation} \label{ea.5}
C=C(K,p,\gamma,d)\coloneqq \frac{1}{p^d}\, \big(K+\sqrt{1-\log_2(\gamma)}\big).
\end{equation}
We will show that if
\begin{equation} \label{ea.6}
\mu_p\big( \{ H: \boldsymbol{X} \text{ is $K$-subgaussian with respect to } \hom_{\mathcal{W}[H]}\}\big)\meg \gamma,
\end{equation}
then $\boldsymbol{X}$ is $C$-subgaussian with respect to $\hom_{\mathcal{W}}$. To this end we need the following lemma.
\begin{lem} \label{la.3}
Let $p,d,n,\boldsymbol{X},\mathcal{W}$ be as in part \emph{(2)} of Theorem \emph{\ref{t5.1}}. Then, setting
\begin{equation} \label{ea.7}
M\coloneqq \underset{H\sim \mu_p}{\ave} \|\hom_{\mathcal{W}[H]}(\boldsymbol{X})\|_{\psi_2},
\end{equation}
for any $t>0$ we have
\begin{equation} \label{ea.8}
\mu_p\big( \big\{H: \big| \|\hom_{\mathcal{W}[H]}(\boldsymbol{X})\|_{\psi_2}-M\big|\meg t\big\}\big)\mik
2\exp\Big(-\frac{2\ln 2\,t^2}{\|\hom_{\mathcal{W}}\|^2_{\Delta}}\Big).
\end{equation}
\end{lem}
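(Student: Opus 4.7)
The plan is to deduce the lemma as a direct application of the bounded differences inequality (Proposition \ref{p2.3}) to the function $g\colon \{0,1\}^n\to\rr$ defined by $g(H)\coloneqq \|\hom_{\mathcal{W}[H]}(\boldsymbol{X})\|_{\psi_2}$. Observe first that, by the definition in \eqref{ea.7}, we have $\underset{H\sim \mu_p}{\ave} g(H)=M$, so $g$ is precisely the object to which concentration should be applied, and all that must be controlled is its deviation about its mean.

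To apply Proposition \ref{p2.3}, I need per-coordinate Lipschitz bounds $|g(H\cup\{i\})-g(H)|\mik c_i$ together with a good bound on $\|\boldsymbol{c}\|_2$. This is exactly what Fact \ref{fa.2} supplies: applying that fact to the bounded measurable function $f=\hom_{\mathcal{W}}$ yields $\Delta_i(g)\mik \Delta_i(\hom_{\mathcal{W}})/\sqrt{\ln 2}$ for every $i\in[n]$, so that
\[ \sum_{i=1}^n \Delta_i(g)^2 \mik \frac{\|\hom_{\mathcal{W}}\|_{\Delta}^2}{\ln 2}. \]
Setting $c_i\coloneqq \Delta_i(g)$, the vector $\boldsymbol{c}=(c_1,\dots,c_n)$ is nonzero unless $\hom_{\mathcal{W}}$ is constant (a trivial case that can be handled separately, or where \eqref{ea.8} holds vacuously after interpreting $0/0$ correctly).

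Finally, invoking Proposition \ref{p2.3} with the function $g$, the vector $\boldsymbol{c}$, and parameter $p$, I obtain for every $t>0$ that
\[ \mu_p\big(\{H: |g(H)-M|\meg t\}\big) \mik 2\exp\Big(-\frac{2t^2}{\|\boldsymbol{c}\|_2^2}\Big) \mik 2\exp\Big(-\frac{2\ln 2\, t^2}{\|\hom_{\mathcal{W}}\|_{\Delta}^2}\Big), \]
which is exactly \eqref{ea.8}. There is no substantial obstacle here: all the work has been externalized to the preparatory results, with Fact \ref{fa.2} supplying the passage from the sup-norm geometry of $\hom_{\mathcal{W}}$ on $[-1,1]^n$ to a $\psi_2$-Lipschitz bound on $g$ via the elementary inequality $\|Y\|_{\psi_2}\mik \|Y\|_{L_\infty}/\sqrt{\ln 2}$, and Proposition \ref{p2.3} supplying the $\mu_p$-concentration. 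The factor $\ln 2$ in the exponent of \eqref{ea.8} is simply the residue of the $\sqrt{\ln 2}$ loss incurred in Fact \ref{fa.2}.
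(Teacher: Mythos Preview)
Your proof is correct and follows essentially the same approach as the paper: define $g(H)=\|\hom_{\mathcal{W}[H]}(\boldsymbol{X})\|_{\psi_2}$, invoke Fact~\ref{fa.2} with $f=\hom_{\mathcal{W}}$ to bound $\|g\|_\Delta\mik\|\hom_{\mathcal{W}}\|_\Delta/\sqrt{\ln 2}$, and then apply Proposition~\ref{p2.3}. Your additional remarks on the degenerate constant case and the provenance of the $\ln 2$ factor are fine supplementary commentary but not part of the paper's write-up.
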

\begin{proof}
Define $g\colon \{0,1\}^n\to\rr$ by setting $g(H)=\|\hom_{\mathcal{W}[H]}(\boldsymbol{X})\|_{\psi_2}$ for every $H\subseteq [n]$, and observe
that $\underset{H\sim \mu_p}{\ave} g(H)= M$. By Fact \ref{fa.2} applied for the function ``$f=\hom_{\mathcal{W}}$", we see that
$\|g\|_{\Delta}\mik \|\hom_{\mathcal{W}}\|_{\Delta}/ \sqrt{\ln 2}$. Hence, by Proposition~\ref{p2.3}, for any $t>0$ we have
\[ \begin{split}
\mu_p\big( \big\{H: \big| \|\hom_{\mathcal{W}[H]}(\boldsymbol{X})\|_{\psi_2} & - M\big|\meg t\big\}\big) =
\mu_p\big( \{H: |g(H)-M|\meg t\}\big) \\
& \mik 2\exp\Big(-\frac{2 t^2}{\|g\|^2_{\Delta}}\Big) \mik 2\exp\Big(-\frac{2\ln 2\,t^2}{\|\hom_{\mathcal{W}}\|^2_{\Delta}}\Big)
\end{split} \]
as desired.
\end{proof}
Now set $t_0\coloneqq \sqrt{1-\log_2(\gamma)}\, \|\hom_{\mathcal{W}}\|_{\Delta}$, and let $M$ be as in \eqref{ea.7}.
By \eqref{ea.6} and Lemma \ref{la.3}, there exists $H_0\subseteq [n]$ such that
\begin{enumerate}
\item[$\bullet$] $M\mik t_0+ \|\hom_{\mathcal{W}[H_0]}(\boldsymbol{X})\|_{\psi_2}$, and
\item[$\bullet$] $\|\hom_{\mathcal{W}[H_0]}(\boldsymbol{X})\|_{\psi_2}\mik K \|\hom_{\mathcal{W}[H_0]}\|_{\Delta}
\mik K\|\hom_{\mathcal{W}}\|_{\Delta}$.
\end{enumerate}
(The last inequality follows from the definition of the semi-norm $\|\cdot\|_{\Delta}$ and \eqref{e5.4}.)
Using these estimates, the result follows by \eqref{ea.2} and the choice of $C$ in \eqref{ea.5}.

\subsection{Proof of part (1)} \label{subsecA.3}

The proof of this part is more involved. As we have already noted,
the argument is similar to that of the proof of Theorem \ref{t4.1}.

\subsubsection{A large deviation inequality} \label{subsubsecA.3.1}

The first step is the following analogue of Proposition~\ref{p4.3}.
\begin{prop} \label{pa.4}
Let $K\meg 1/\sqrt{2}$, and let $0<p<1$. Also let $d,n, \boldsymbol{X},\mathcal{W}$ be as in part \emph{(1)}
of Theorem \emph{\ref{t5.1}}. If\, $\boldsymbol{X}$ is $K\text{-subgaussian}$ with respect to $\hom_\mathcal{W}$,
then for any $\lambda\meg 8\sqrt{2}$,
\begin{equation} \label{ea.9}
\mu_p\big( \big\{ H: \|\hom_{\mathcal{W}[H]}(\boldsymbol{X})\|_{\psi_2} \meg
\lambda K\, \|\hom_{\mathcal{W}}\|_\Delta \big\}\big) \mik 3\exp\Big(-\frac{\ln 2}{32}\lambda^2\Big).
\end{equation}
\end{prop}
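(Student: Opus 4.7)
The plan is to follow the proof of Proposition \ref{p4.3} \emph{mutatis mutandis}, replacing the linear form $\langle\boldsymbol{\theta},\boldsymbol{X}\rangle$ by the homomorphism density $\hom_{\mathcal{W}}(\boldsymbol{X})$, the $\ell_2$-norm $\|\boldsymbol{\theta}\|_2$ by the semi-norm $\|\hom_{\mathcal{W}}\|_\Delta$, and the averaging constant $p$ by $p^d$ (justified by the identity in Fact \ref{fa.1}). Concretely, I would first formulate and prove the following analogue of Lemma \ref{l4.6}: setting $Q\coloneqq \max\{2p^d K,\sqrt{2}\}$, for every $M\meg \max\{4\sqrt{2\ln 2}\,p^dK,4\sqrt{\ln 2}\}$ one has
\[
\mu_p\Big(\Big\{H:\|\hom_{\mathcal{W}[H]}(\boldsymbol{X})\|_{\psi_2}\mik \sqrt{3/\ln 2}\,M\,\|\hom_{\mathcal{W}}\|_\Delta\Big\}\Big)\meg 1-3\exp(-M^2/2Q^2).
\]
Once this is established, Proposition \ref{pa.4} follows by the same substitution $M=(\sqrt{\ln 2}/2)\lambda K$ that was used to pass from Lemma \ref{l4.6} to Proposition \ref{p4.3}, since $2K\meg Q$.

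To prove the lemma, after normalizing $\|\hom_{\mathcal{W}}\|_\Delta=1$, the first step is the pointwise analogue of \eqref{e4.12}: for every $t>0$,
\[
\mu_p\Big(\Big\{H:\pp\big(\{|\hom_{\mathcal{W}[H]}(\boldsymbol{X})|\meg t\}\big)\mik 2\exp(-t^2/2Q^2)\Big\}\Big)\meg 1-2\exp(-t^2/2Q^2).
\]
The main deviation from the linear case lies here: instead of invoking Hoeffding (Proposition \ref{p2.2}) to control $|\langle\boldsymbol{\theta}_H,\boldsymbol{X}(\omega)\rangle-p\langle\boldsymbol{\theta},\boldsymbol{X}(\omega)\rangle|$, I would apply the bounded differences inequality (Proposition \ref{p2.3}) to the function $H\mapsto \hom_{\mathcal{W}[H]}(\boldsymbol{X}(\omega))$ on $\{0,1\}^n$. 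The point is that $\hom_{\mathcal{W}[H]}(\boldsymbol{x})=\hom_{\mathcal{W}}(\pi_H(\boldsymbol{x}))$, so for any $i\in[n]$ and any $H\subseteq [n]\setminus\{i\}$, the vectors $\pi_{H\cup\{i\}}(\boldsymbol{X}(\omega))$ and $\pi_H(\boldsymbol{X}(\omega))$ lie in $[-1,1]^n$ and differ only in the $i$-th coordinate; hence
\[
\big|\hom_{\mathcal{W}[H\cup\{i\}]}(\boldsymbol{X}(\omega))-\hom_{\mathcal{W}[H]}(\boldsymbol{X}(\omega))\big|\mik \Delta_i(\hom_{\mathcal{W}}),
\]
and the vector of differences has $\ell_2$-norm exactly $\|\hom_{\mathcal{W}}\|_\Delta=1$. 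Combined with Fact \ref{fa.1}, which identifies the mean of $\hom_{\mathcal{W}[H]}(\boldsymbol{X}(\omega))$ under $\mu_p$ as $p^d\,\hom_{\mathcal{W}}(\boldsymbol{X}(\omega))$, this yields the analogue of \eqref{e4.13}. The analogue of \eqref{e4.16} is automatic from the $K$-subgaussianity hypothesis applied to the single random variable $\hom_{\mathcal{W}}(\boldsymbol{X})$ at threshold $t/(2p^d)$. The inclusion of events, Fubini, and Markov are then applied exactly as in \eqref{e4.14}--\eqref{e4.19}.

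The second step of the proof of the lemma is the dyadic discretization from Step 2 of Lemma \ref{l4.6}: define
\[
\mathcal{C}_M^j\coloneqq \Big\{H:\pp\big(\{|\hom_{\mathcal{W}[H]}(\boldsymbol{X})|\meg 2^jM\}\big)\mik 2\exp(-2^{2j}M^2/2Q^2)\Big\},
\]
set $\mathcal{C}_M\coloneqq \bigcap_{j\in\nn}\mathcal{C}_M^j$, and use the geometric series estimate of \eqref{e4.22} (which only requires $M\meg \sqrt{2\ln 2}\,Q$) to show $\mu_p(\mathcal{C}_M)\meg 1-3\exp(-M^2/2Q^2)$. For each $H\in\mathcal{C}_M$, Sublemma \ref{s4.7} applied with ``$X=\hom_{\mathcal{W}[H]}(\boldsymbol{X})$'', ``$R=M$'' and ``$C=\sqrt{2}\,Q$'' gives $\|\hom_{\mathcal{W}[H]}(\boldsymbol{X})\|_{\psi_2}\mik \sqrt{3/\ln 2}\,M$, completing the proof.

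The main potential obstacle is the replacement of Hoeffding by the bounded differences inequality in Step 1; the key verification is that the coordinatewise differences of $H\mapsto\hom_{\mathcal{W}[H]}(\boldsymbol{X}(\omega))$ are controlled by $\Delta_i(\hom_{\mathcal{W}})$, which follows directly from the identification $\hom_{\mathcal{W}[H]}=(\hom_{\mathcal{W}})_H$ noted at the end of \ref{subsubsec5.1.4} and the definition of $\|\cdot\|_\Delta$. Every other step is routine copying of the linear argument.
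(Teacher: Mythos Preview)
Your proposal is correct and follows essentially the same approach as the paper's own proof: reduce to an analogue of Lemma \ref{l4.6} with $Q=\max\{2p^dK,\sqrt{2}\}$, replace the Hoeffding step \eqref{e4.13} by the bounded differences inequality applied to $H\mapsto\hom_{\mathcal{W}[H]}(\boldsymbol{X}(\omega))$ using $\Delta_i(\zeta)\mik\Delta_i(\hom_{\mathcal{W}})$ and the mean identity of Fact \ref{fa.1}, and then carry over the Fubini/Markov and dyadic discretization steps verbatim. One trivial slip: the $\ell_2$-norm of the difference vector is \emph{at most} (not exactly) $\|\hom_{\mathcal{W}}\|_\Delta$, but this is of course all that is needed.
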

\begin{proof}
Note that, arguing as in Subsection \ref{subsec4.3}, it is enough to show the following.
\medskip

\noindent \textit{Let $K>0$, and let $0<p<1$. Also let $d,n, \boldsymbol{X},\mathcal{W}$ be as in
part \emph{(1)} of Theorem \emph{\ref{t5.1}}. If $\boldsymbol{X}$ is $K\text{-subgaussian}$ with respect to $\hom_\mathcal{W}$,
then, setting $Q\coloneqq \max\{2p^dK,\sqrt{2}\}$, for every $M\meg \max\{4\sqrt{2\ln 2}\, p^dK , 4\sqrt{\ln 2}\}$ we have }
\begin{equation} \label{ea.10}
\mu_p\Big( \Big\{ H: \| \hom_{\mathcal{W}[H]}(\boldsymbol{X}) \|_{\psi_2} \mik
\sqrt{\frac{3}{\ln 2}} \,M\,\|\hom_{\mathcal{W}}\|_\Delta \Big\}\Big) \meg 1- 3 \exp\Big(-\frac{M^2}{2Q^2}\Big).
\end{equation}

\noindent The left-hand side of \eqref{ea.10} is scale-invariant, and so we may assume that the weighted hypergraph $\mathcal{W}$
satisfies $\|\hom_{\mathcal{W}}\|_\Delta=1$. Thus, it is enough to prove that
\begin{equation} \label{ea.11}
\mu_p\Big( \Big\{ H: \| \hom_{\mathcal{W}[H]}(\boldsymbol{X})  \|_{\psi_2} \mik
\sqrt{\frac{3}{\ln 2}}\,M \Big\}\Big) \meg 1- 3 \exp\Big(-\frac{M^2}{2Q^2}\Big)
\end{equation}
for every $M\meg \max\{4\sqrt{2\ln 2}\, p^dK , 4\sqrt{\ln 2}\}$.

As in Lemma \ref{l4.6}, we start by showing that for any $t>0$ we have
\begin{equation} \label{ea.12}
\mu_p\Big(\Big\{ H: \mathbb{P}\big(\big\{|\hom_{\mathcal{W}[H]}(\boldsymbol{X}) |\meg t\big\}\big)
\mik 2 \exp\Big(-\frac{t^2}{2Q^2}\Big)\Big\}\Big) \meg  1- 2 \exp\Big(-\frac{t^2}{2Q^2}\Big).
\end{equation}
Fix $t>0$ and let $(\Omega,\mathcal{F},\mathbb{P})$ denote the underlying probability space.
Let $\omega\in\Omega$ be arbitrary, and recall that $\boldsymbol{X}(\omega)\in [-1,1]^n$. We define
the map $\zeta\colon \{0,1\}^n\to \rr$ by setting $\zeta(H)=\hom_{\mathcal{W}[H]}\big(\boldsymbol{X}(\omega)\big)$
for every $H\subseteq [n]$; observe that $\Delta_i(\zeta)\mik \Delta_i(\hom_{\mathcal{W}})$ for every $i\in [n]$.
Since $\|\hom_{\mathcal{W}}\|_\Delta=1$, by Proposition \ref{p2.3} and identity \eqref{ea.1},
\begin{equation} \label{ea.13}
\mu_p \Big(\Big\{ H\! : \big| \hom_{\mathcal{W}[H]}\big(\boldsymbol{X}(\omega)\big) -
p^d \hom_{\mathcal{W}}\big(\boldsymbol{X}(\omega)\big) \big|< \frac{t}{2} \Big\}\Big)
\meg 1 - 2 \exp\Big(-\frac{t^2}{2}\Big).
\end{equation}
(Note that \eqref{ea.13} is the analogue of \eqref{e4.13}. We point out that this is, essentially, the only step
of the proof which differs from that of Proposition \ref{p4.3}.) Also observe that the event
\begin{equation} \label{ea.14}
\big\{ (H,\omega): \big|\hom_{\mathcal{W}[H]}\big(\boldsymbol{X}(\omega)\big)\big|<t\big\}
\end{equation}
contains the event
\begin{equation} \label{ea.15}
\begin{split}
\Big\{ (H,\omega):  \big|\hom_{\mathcal{W}}&\big(\boldsymbol{X}(\omega)\big)\big|<\frac{t}{2p^d} \ \text{ and }\\
& \big|\hom_{\mathcal{W}[H]}\big(\boldsymbol{X}(\omega)\big) -
p^d\,\hom_{\mathcal{W}}\big(\boldsymbol{X}(\omega)\big)\big|< \frac{t}{2}\Big\}.
\end{split}
\end{equation}
On the other hand, we have $\|\hom_{\mathcal{W}}(\boldsymbol{X})\|_{\psi_2}\mik K$ since $\|\hom_{\mathcal{W}}\|_\Delta=1$
and the random vector $\boldsymbol{X}$ is $K$-subgaussian with respect to $\hom_{\mathcal{W}}$.
Thus, by Proposition~\ref{p2.1},
\begin{equation} \label{ea.16}
\mathbb{P}\Big( \Big\{ |\hom_{\mathcal{W}}(\boldsymbol{X})|<\frac{t}{2p^d} \Big\}\Big)\meg
1-2\exp\Big(-\frac{t^2}{(2p^dK)^2}\Big).
\end{equation}
Denoting by $\mu_p\times\mathbb{P}$ the product probability measure of $\mu_p$ and $\mathbb{P}$,
the previous discussion yields that
\begin{equation} \label{ea.17}
\mu_p\times\mathbb{P} \big(\big\{ (H,\omega) : \big|\hom_{\mathcal{W}[H]}\big(\boldsymbol{X}(\omega)\big)\big|
\meg t\big\}\big) \mik 4\exp\Big(-\frac{t^2}{Q^2}\Big).
\end{equation}
The estimate in \eqref{ea.12} now follows from \eqref{ea.17} and Markov's inequality.

With inequality \eqref{ea.12} at our disposal, we will estimate the probability in \eqref{ea.11} using Sublemma \ref{s4.7}.
Precisely, fix $M\meg \max\{4\sqrt{2\ln 2}\,p^dK , 4\sqrt{\ln 2}\}$, and for every $j\in\nn$ set
\begin{equation} \label{ea.18}
\mathcal{C}_M^j\coloneqq \Big\{H: \mathbb{P}\big(\big\{ |\hom_{\mathcal{W}[H]}(\boldsymbol{X})|\meg 2^j M\big\}\big)
\mik 2\exp\Big(-\frac{2^{2j}M^2}{2Q^2}\Big)\Big\}
\end{equation}
Also set
\begin{equation} \label{ea.19}
\mathcal{C}_M \coloneqq \bigcap_{j\in\nn} \mathcal{C}_M^j.
\end{equation}
By \eqref{ea.12}, we have $\mu_p(\mathcal{C}_M^j)\meg 1- 2\exp\Big(-\frac{2^{2j}M^2}{2Q^2}\Big)$ for every $j\in\mathbb{N}$.
This estimate and the fact that $M\meg 2\sqrt{2\ln 2}\, Q \meg \sqrt{2\ln 2}\, Q$ are easily seen to imply that
\begin{equation} \label{ea.20}
\mu_p(\mathcal{C}_M) \meg 1- 3 \exp(-M^2/2Q^2).
\end{equation}
For every $H\in \mathcal{C}_M$, by Sublemma \ref{s4.7} applied for the random variable ``$X = \hom_{\mathcal{W}[H]}(\boldsymbol{X})$"\!,
``$R=M$" and ``$C=\sqrt{2}\,Q$" and using again the fact that $M\meg 2\sqrt{2\ln 2}\, Q$, we obtain that
$\|\hom_{\mathcal{W}[H]}(\boldsymbol{X})\|_{\psi_2}\mik \sqrt{3/\ln 2}\,M$. That is, \eqref{ea.11} is satisfied, as desired.
\end{proof}

\subsubsection{Consequences} \label{subsubsecA.3.2}

We will need two consequences of Proposition \ref{pa.4}. The first one is the analogue of Corollary \ref{c4.4};
its proof is identical to that of Corollary \ref{c4.4}.
\begin{cor} \label{ca.5}
Let $K\meg 1/\sqrt{2}$, and let $0<p<1$. Also let $d,n, \boldsymbol{X},\mathcal{W}$ be as in part \emph{(1)} of
Theorem \emph{\ref{t5.1}}. If\, $\boldsymbol{X}$ is $K\text{-subgaussian}$ with respect to $\hom_\mathcal{W}$, then
\begin{equation} \label{ea.21}
\underset{H\sim \mu_p}{\ave} \|\hom_{\mathcal{W}[H]}(\boldsymbol{X})\|_{\psi_2}\mik 12 K\,\|\hom_{\mathcal{W}}\|_\Delta.
\end{equation}
\end{cor}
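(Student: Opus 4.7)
The plan is to follow the proof of Corollary \ref{c4.4} verbatim, substituting the role of $\|\boldsymbol{\theta}\|_2$ by $\|\hom_{\mathcal{W}}\|_\Delta$ and invoking Proposition \ref{pa.4} instead of Proposition \ref{p4.3}. First, I would express the left-hand side of \eqref{ea.21} using the layer-cake identity as
\[
\underset{H\sim \mu_p}{\ave} \|\hom_{\mathcal{W}[H]}(\boldsymbol{X})\|_{\psi_2} = \int_0^\infty \mu_p\big(\big\{H: \|\hom_{\mathcal{W}[H]}(\boldsymbol{X})\|_{\psi_2} \meg t\big\}\big)\, dt,
\]
and then perform the change of variables $t = \lambda K \|\hom_{\mathcal{W}}\|_\Delta$ to rewrite the right-hand side as $K\|\hom_{\mathcal{W}}\|_\Delta$ times an integral in $\lambda$ over $[0,\infty)$.

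Next, I would split this integral at the threshold $\lambda_0 = 8\sqrt{2}$ dictated by Proposition \ref{pa.4}. On $[0, 8\sqrt{2}]$ I use the trivial bound that probabilities are at most $1$, giving a contribution of $8\sqrt{2}\, K\|\hom_{\mathcal{W}}\|_\Delta$. On $[8\sqrt{2}, \infty)$ I invoke the tail estimate \eqref{ea.9}, which bounds the integrand by $3\exp(-\tfrac{\ln 2}{32}\lambda^2)$, producing a convergent Gaussian tail contribution. Adding the two yields an absolute numerical constant strictly less than $12$ in front of $K\|\hom_{\mathcal{W}}\|_\Delta$, exactly as in Corollary \ref{c4.4}.

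There is no real obstacle here: all the probabilistic content is already packaged into Proposition \ref{pa.4}, and the only remaining task is the routine numerical estimate
\[
8\sqrt{2}+ \int_{8\sqrt{2}}^{\infty} 3 \exp\Big(-\frac{\ln 2}{32}\lambda^2\Big)\, d\lambda \mik 12,
\]
which is the same inequality used in the proof of Corollary \ref{c4.4} and hence requires no new work.
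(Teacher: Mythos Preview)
Your proposal is correct and matches the paper's own proof exactly: the paper states that the proof of Corollary~\ref{ca.5} is identical to that of Corollary~\ref{c4.4}, which is precisely the layer-cake argument with the split at $\lambda_0=8\sqrt{2}$ and the numerical estimate you describe, with Proposition~\ref{pa.4} replacing Proposition~\ref{p4.3} and $\|\hom_{\mathcal{W}}\|_\Delta$ replacing $\|\boldsymbol{\theta}\|_2$.
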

The second corollary is the analogue of Corollary \ref{c4.5}.
\begin{cor} \label{ca.6}
Let $K\meg 1/\sqrt{2}$, and let $0<p<1$. Also let $d,n, \boldsymbol{X},\mathcal{W}$ be as in part \emph{(1)} of
Theorem \emph{\ref{t5.1}}. If\, $\boldsymbol{X}$ is $K\text{-subgaussian}$ with respect to $\hom_\mathcal{W}$, then for any $\lambda> 0$,
\begin{equation} \label{ea.22}
\mu_p\big(\big\{H: \|\hom_{\mathcal{W}[H]}(\boldsymbol{X})\|_{\psi_2} \meg (12+\lambda)K\,\|\hom_{\mathcal{W}}\|_\Delta \big\}\big)
\mik 2\exp(-2\ln 2\,\lambda^2K^2).
\end{equation}
\end{cor}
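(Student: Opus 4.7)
The plan is to follow exactly the strategy used to derive Corollary \ref{c4.5} from Corollary \ref{c4.4} and Lemma \ref{l3.4}; the two ingredients needed are both already available for the hypergraph setting, namely Corollary \ref{ca.5} (the upper bound on the mean) and Lemma \ref{la.3} (concentration of the $\psi_2$-norm around its mean under $\mu_p$).

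First I would set
\[ M \coloneqq \underset{H\sim\mu_p}{\ave}\|\hom_{\mathcal{W}[H]}(\boldsymbol{X})\|_{\psi_2}, \]
and invoke Corollary \ref{ca.5} to obtain $M \mik 12 K\,\|\hom_{\mathcal{W}}\|_\Delta$. Next I would observe that if $H$ satisfies $\|\hom_{\mathcal{W}[H]}(\boldsymbol{X})\|_{\psi_2} \meg (12+\lambda)K\,\|\hom_{\mathcal{W}}\|_\Delta$, then
\[ \|\hom_{\mathcal{W}[H]}(\boldsymbol{X})\|_{\psi_2} - M \meg \lambda K\,\|\hom_{\mathcal{W}}\|_\Delta, \]
so the event appearing on the left-hand side of \eqref{ea.22} is contained in the deviation event governed by Lemma \ref{la.3}. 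Finally, applying Lemma \ref{la.3} with $t = \lambda K\,\|\hom_{\mathcal{W}}\|_\Delta$ yields
\[ \mu_p\big(\big\{H : \|\hom_{\mathcal{W}[H]}(\boldsymbol{X})\|_{\psi_2} - M \meg t\big\}\big) \mik 2\exp\Big(-\frac{2\ln 2\,t^2}{\|\hom_{\mathcal{W}}\|_\Delta^2}\Big) = 2\exp(-2\ln 2\,\lambda^2K^2), \]
which is precisely the bound in \eqref{ea.22}.

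Since both Corollary \ref{ca.5} and Lemma \ref{la.3} are in hand, there is no real obstacle: the argument is a one-line combination. The only minor point to verify is that the same constants ($12$ and the $\ln 2$ factor in the exponent) match on both sides, and these are exactly what Corollary \ref{ca.5} and Lemma \ref{la.3} provide; in particular, the $\sqrt{\ln 2}$ loss that appears in Fact \ref{fa.2} (and hence in the $\|g\|_\Delta$ estimate used in Lemma \ref{la.3}) is exactly what produces the $2\ln 2$ in the exponent of the final tail bound, mirroring the role of the bound $\|X\|_{\psi_2}\mik 1/\sqrt{\ln 2}$ for bounded random variables in the linear setting.
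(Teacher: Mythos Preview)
Your proof is correct and is essentially the same as the paper's: both combine the mean bound from Corollary~\ref{ca.5} with the bounded-differences concentration for $g(H)=\|\hom_{\mathcal{W}[H]}(\boldsymbol{X})\|_{\psi_2}$. The only cosmetic difference is that you cite the pre-packaged Lemma~\ref{la.3}, whereas the paper restates its content inline (defining $g$, invoking Fact~\ref{fa.2} to bound $\|g\|_\Delta$, and applying Proposition~\ref{p2.3} directly).
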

\begin{proof}
As in the proof of Lemma \ref{la.3}, define the function $g\colon \{0,1\}^n \to \rr$ by setting
$g(H)= \|\hom_{\mathcal{W}[H]}(\boldsymbol{X})\|_{\psi_2}$ for every $H\subseteq[n]$. Recall that, by Fact \ref{fa.2}, we have
\begin{equation}\label{ea.23}
\|g\|_\Delta \mik \frac{\|\hom_{\mathcal{W}}\|_\Delta }{\sqrt{\ln2}}.
\end{equation}
Using Corollary \ref{ca.5} and \eqref{ea.23}, the result follows by applying Proposition \ref{p2.3} to the function $g$
and the vector ``$\mathbf{c}=\big(\Delta_1(g),\dots,\Delta_n(g)\big)$".
\end{proof}

\subsubsection{Completion of the proof} \label{subsubsecA.3.3}

Notice that part (1) of Theorem \ref{t5.1} follows from the following, more informative, theorem.
\begin{thm} \label{ta.7}
Let $K,p,d,n,\boldsymbol{X},\mathcal{W}$ be as in part \emph{(1)} of Theorem \emph{\ref{t5.1}}.
Also let $0<\eta<p^d$, and set
\begin{equation}\label{ea.24}
C= C(K,d,\eta) \coloneqq \frac{26}{\eta}\, (K+1)\, \sqrt{\frac{1}{2}\, \log_2\Big(\frac{4}{\eta}\Big)}.
\end{equation}
If $\boldsymbol{X}$ is $K$-subgaussian with respect to $\hom_\mathcal{W}$, then
\begin{equation} \label{ea.25}
\mu_p\big( \{ H: \boldsymbol{X} \text{ is $C$-sub}\text{gaussian with respect to } \hom_{\mathcal{W}[H]}\}\big) \meg p^d-\eta.
\end{equation}
\end{thm}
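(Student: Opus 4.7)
\textbf{Plan for the proof of Theorem \ref{ta.7}.} The strategy mirrors that of Theorem \ref{t4.1}. After normalizing so that $\|\hom_{\mathcal{W}}\|_{\Delta}=1$ (permissible by scale-invariance), Corollary \ref{ca.6} immediately handles the $\psi_2$-side: for any $\lambda>0$, on a set of $H$'s of $\mu_p$-measure at least $1-2\exp(-2\ln 2\,\lambda^2K^2)$, one has $\|\hom_{\mathcal{W}[H]}(\boldsymbol{X})\|_{\psi_2}\mik (12+\lambda)K$. What remains is to secure, on a set of $H$'s of $\mu_p$-measure at least $p^d-\eta/2$, a \emph{lower} bound of the form $\|\hom_{\mathcal{W}[H]}\|_{\Delta}\meg \beta$ for some $\beta=\beta(p,d,\eta)>0$. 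Combining the two would show that $\boldsymbol{X}$ is $((12+\lambda)K/\beta)$-subgaussian with respect to $\hom_{\mathcal{W}[H]}$, and optimizing $\alpha,\lambda$ along the lines of \eqref{e4.32} would match the constant $C(K,d,\eta)$ in \eqref{ea.24}.

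The new ingredient I would establish is a hypergraph analogue of Proposition \ref{p4.8}, i.e., a dichotomy replacing the $\|\boldsymbol{\theta}\|_\infty$ split by an iterative peeling. Given a weighted $e$-uniform hypergraph $\mathcal{V}$ on $[n]$ and $j\in [n]$, let $\partial_j\mathcal{V}$ denote the weighted $(e-1)$-uniform hypergraph on $[n]\setminus\{j\}$ with $(\partial_j\mathcal{V})(f)\coloneqq \mathcal{V}(f\cup\{j\})$. Fix a threshold $\alpha\in (0,1]$. Starting from $\mathcal{V}^{(0)}=\mathcal{W}$ and $S_0=\emptyset$, at each stage $t$: if every $j$ satisfies $\Delta_j(\hom_{\mathcal{V}^{(t)}})\mik \alpha\,\|\hom_{\mathcal{V}^{(t)}}\|_{\Delta}$ we stop (``spread'' stage); otherwise pick $j_t$ violating this, append $j_t$ to $S_0$, replace $\mathcal{V}^{(t)}$ by $\partial_{j_t}\mathcal{V}^{(t)}$, and iterate. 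The procedure halts with $|S_0|\mik d$. Conditioning on $\{S_0\subseteq H\}$, an event of probability exactly $p^{|S_0|}\meg p^d$, restricts the analysis to the reduced hypergraph $\partial_{S_0}\mathcal{W}$.

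The main obstacle is to pass a lower bound on $\|\hom_{(\partial_{S_0}\mathcal{W})[H\setminus S_0]}\|_{\Delta}$ back to a lower bound on $\|\hom_{\mathcal{W}[H]}\|_{\Delta}$ when $S_0\subseteq H$. For each $j\in H\setminus S_0$, specializing the coordinates of $S_0$ to $1$ in the definition of $\Delta_j(\hom_{\mathcal{W}[H]})$ produces a ``leading'' contribution equal to $\Delta_j(\hom_{(\partial_{S_0}\mathcal{W})[H\setminus S_0]})$, together with a signed remainder coming from edges $e\subseteq H$ that contain $j$ but do not fully contain $S_0$; the latter is controlled by the ``spread'' inequalities $\Delta_j(\hom_{\mathcal{V}^{(t)}})\mik \alpha\,\|\hom_{\mathcal{V}^{(t)}}\|_{\Delta}$ produced at each intermediate stage of the peeling. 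Handling this remainder cleanly (so that the loss is summable across the at most $d$ stages) is the technical heart of the argument.

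Once the transfer is in place, I would apply Proposition \ref{p2.2} to the vector $\boldsymbol{c}=\big(\Delta_j(\hom_{\partial_{S_0}\mathcal{W}})^2\big)_j$ to conclude that, with conditional probability at least $1-2\exp(-\Omega(p^2/\alpha^2))$, $\|\hom_{(\partial_{S_0}\mathcal{W})[H\setminus S_0]}\|_{\Delta}^2\meg (p/2)\,\|\hom_{\partial_{S_0}\mathcal{W}}\|_{\Delta}^2$, which in turn yields $\|\hom_{\mathcal{W}[H]}\|_{\Delta}\meg \beta$ for $\beta$ of order $\alpha^{|S_0|}\sqrt{p}$. A union bound over the two failure events (the $\psi_2$-event and the $\Delta$-event), with the choices $\lambda\sim \sqrt{\log_2(4/\eta)}/(K+1)$ and $\alpha\sim p/\sqrt{2\ln(4/\eta)}$ analogous to \eqref{e4.32}, then yields \eqref{ea.25} with the desired constant; the $1/\eta$ factor in \eqref{ea.24} accounts precisely for the $\alpha^{-d}$ loss incurred by the iteration.
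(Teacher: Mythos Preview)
Your plan is substantially more elaborate than what the paper does, and it has a real gap. The paper does \emph{not} prove a hypergraph analogue of the dichotomy in Proposition~\ref{p4.8}; instead it takes the elementary route flagged in Remark~\ref{r4.12}. After invoking Corollary~\ref{ca.6} exactly as you do to obtain $\mathcal{H}_1=\{H:\|\hom_{\mathcal{W}[H]}(\boldsymbol{X})\|_{\psi_2}\mik (12+\lambda)(K+1)\|\hom_{\mathcal{W}}\|_\Delta\}$ with $\mu_p(\mathcal{H}_1)\meg 1-\eta/2$, the lower bound on $\|\hom_{\mathcal{W}[H]}\|_\Delta$ is pure Markov: identity~\eqref{ea.1} together with the triangle inequality for the semi-norm $\|\cdot\|_\Delta$ gives $p^d\|\hom_{\mathcal{W}}\|_\Delta\mik \underset{H\sim\mu_p}{\ave}\|\hom_{\mathcal{W}[H]}\|_\Delta$, while trivially $\|\hom_{\mathcal{W}[H]}\|_\Delta\mik\|\hom_{\mathcal{W}}\|_\Delta$ for every $H$. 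These two facts alone force $\mu_p\big(\{H:\|\hom_{\mathcal{W}[H]}\|_\Delta\meg(\eta/2)\|\hom_{\mathcal{W}}\|_\Delta\}\big)\meg p^d-\eta/2$, and intersecting with $\mathcal{H}_1$ yields~\eqref{ea.25} with exactly the constant~\eqref{ea.24}. No peeling, no Hoeffding on the $\Delta$-side, no transfer lemma; the $1/\eta$ in~\eqref{ea.24} comes from the Markov threshold $\eta/2$, not from any $\alpha^{-d}$.

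Your route has a concrete obstruction at the ``spread'' step. When $d=1$ one has $\|\hom_{\mathcal{W}[H]}\|_\Delta^2=\sum_{j\in H}\Delta_j(\hom_{\mathcal{W}})^2$, which is precisely the linear statistic Proposition~\ref{p2.2} controls. For $e$-uniform hypergraphs with $e\meg 2$ this identity fails: $\Delta_j(\hom_{\mathcal{V}[H]})$ depends on which \emph{edges} through $j$ lie entirely in $H$, not merely on whether $j\in H$, so $\|\hom_{\mathcal{V}[H]}\|_\Delta^2$ is not of the form $\sum_{j\in H}c_j$ for any fixed vector $\boldsymbol{c}$. Thus your proposed application of Proposition~\ref{p2.2} to $\boldsymbol{c}=\big(\Delta_j(\hom_{\partial_{S_0}\mathcal{W}})^2\big)_j$ does not yield concentration of $\|\hom_{(\partial_{S_0}\mathcal{W})[H\setminus S_0]}\|_\Delta^2$ unless the peeling has already driven the uniformity down to $1$. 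Combined with the unfinished transfer from $\partial_{S_0}\mathcal{W}$ back to $\mathcal{W}[H]$ (which, as your own example with signed remainders suggests, is not a termwise comparison), the argument is incomplete. The simple Markov bound above sidesteps all of these issues.
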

\begin{proof}
Set
\begin{equation} \label{ea.26}
\lambda\coloneqq \frac{1}{K+1}\, \sqrt{\frac{1}{2}\log_2\Big(\frac{4}{\eta}\Big)}
\end{equation}
and observe that $2\exp(-2\ln2 \lambda^2(K+1)^2)=\eta/2$. Also set
\begin{equation}\label{ea.27}
\mathcal{H}_1\coloneqq \big\{H: \|\hom_{\mathcal{W}[H]}(\boldsymbol{X})\|_{\psi_2}\mik
(12+\lambda)(K+1)\, \|\hom_{\mathcal{W}}\|_\Delta \big\}
\end{equation}
and
\begin{equation} \label{ea.28}
\mathcal{H}_2\coloneqq \Big\{H: \|\hom_{\mathcal{W}[H]}\|_\Delta \meg \frac{\eta}{2}\,\|\hom_{\mathcal{W}}\|_\Delta\Big\}.
\end{equation}
By Corollary \ref{ca.6}, we have
\begin{equation} \label{ea.29}
\mu_p(\mathcal{H}_1)\meg 1-2\exp\big(-2\ln2\, \lambda^2(K+1)^2\big) = 1- \frac{\eta}{2}.
\end{equation}
On the other hand, by identity \eqref{ea.1}, the fact that $\|\cdot\|_{\Delta}$ is a semi-norm, and the triangle inequality,
we have $p^d\,\|\hom_{\mathcal{W}}\|_\Delta\mik \underset{H\sim \mu_p}{\ave} \|\hom_{\mathcal{W}[H]}\|_{\Delta}$.
Moreover, notice that $\|\hom_{\mathcal{W}[H]}\|_{\Delta} \mik \|\hom_{\mathcal{W}}\|_{\Delta}$ for every $H\subseteq [n]$.
Using these observations, we obtain that
\begin{equation} \label{ea.30}
\mu_p(\mathcal{H}_2) \meg p^d - \frac{\eta}{2}.
\end{equation}
Therefore, by \eqref{ea.29} and \eqref{ea.30}, we see that
\begin{equation} \label{ea.31}
\mu_p(\mathcal{H}_1\cap\mathcal{H}_2)\meg p^d -\eta.
\end{equation}
Finally observe that, by the choice of $C$ in \eqref{ea.24}, for every $H\in \mathcal{H}_1\cap\mathcal{H}_2$
we have $\|\hom_{\mathcal{W}[H]}(\boldsymbol{X})\|_{\psi_2}\mik C\|\hom_{\mathcal{W}[H]}\|_\Delta$.
The proof is completed.
\end{proof}
\begin{rem} \label{ra.9}
We note that it is also possible to obtain a partial extension of part~(i) of Proposition \ref{p4.8}. More precisely,
if $\mathcal{W}$ is the complete $d$-uniform hypergraph on $n$ vertices---that is, if $\mathcal{W}(e)=1$
for every $e\in {[n]\choose d}$---or, more generally, if the weighted hypergraph $\mathcal{W}$ is sufficiently
pseudorandom\footnote{The notion of pseudorandomness which is needed in our setting is the following requirement:
for every $0<p<1$, if $n$ is sufficiently large (depending only on $p$), then for every $i\in [n]$ and every
$H\subseteq [n]$ with $|H|\meg pn$ we have
\[ \sum_{i\in e\in {H\choose d}}\!\!\!\mathcal{W}(e) \meg
(p^{d-1}/2)\!\!\sum_{i\in e\in {[n]\choose d}}\!\!\!\mathcal{W}(e). \]}, then the probability on the left-hand side
of \eqref{e5.6} is $1-o_{C\to\infty;K,p,d}(1)-o_{n\to\infty;p,d}(1)$.
\end{rem}


\end{document}